\journalname{}
\title{Functional Analysis and Exterior Calculus on Mixed-Dimensional Geometries\thanks{This research was supported in part by the Norwegian Research Council
grants: 233736, 250223.}}
\author{Wietse M. Boon \and Jan M. Nordbotten \and Jon E. Vatne}
\institute{Wietse M. Boon \at
	Department of Mathematics, KTH Royal Institute of Technology, Stockholm, Sweden \\(wietse@kth.se).
\and Jan M. Nordbotten \at
	Department of Mathematics, University of Bergen, Bergen, Norway. \\
	Department of Civil and Environmental Engineering, Princeton University, Princeton, USA.
\and Jon E. Vatne \at
	Department of Computer Science, Electrical Engineering and Mathematical Sciences, \\
	Western Norway University of Applied Sciences, Bergen, Norway
  }
\date{Received: date / Accepted: date}
\newcommand\myd{\mathrm{d}}
\newcommand{\Bbbd}{\mathbbm{d}}
\DeclareMathOperator{\Tr}{Tr}
\DeclareMathOperator{\mystar}{\star}
\DeclareMathOperator*{\arginf}{\arg\,\inf}
\newcommand{\tx}[1]{\makebox[\widthof{$C^\infty\Lambda^n(\overline{X})$}]{$#1$}} 
\newcommand{\hatj}{\hat{\jmath}}
\begin{document}

\maketitle

\begin{abstract}
We are interested in differential forms on mixed-dimensional geometries, in the sense of a domain containing sets of $d$-dimensional manifolds, structured hierarchically so that each $d$-dimensional manifold is contained in the boundary of one or more $d + 1$ dimensional manifolds.

On any given $d$-dimensional manifold, we then consider differential operators tangent to the manifold as well as discrete differential operators (jumps) normal to the manifold. The combined action of these operators leads to the notion of a semi-discrete differential operator coupling manifolds of different dimensions. We refer to the resulting systems of equations as mixed-dimensional, which have become a popular modeling technique for physical applications including fractured and composite materials. 

We establish analytical tools in the mixed-dimensional setting, including suitable inner products, differential and codifferential operators, Poincar\'e lemma, and Poincar\'e--Friedrichs inequality. The manuscript is concluded by defining the mixed-dimensional minimization problem corresponding to the Hodge-Laplacian, and we show that this minimization problem is well-posed.

\keywords{mixed-dimensional differential operators, mixed-dimensional geometry, exterior calculus.}

\end{abstract}

\section{Introduction}\label{introduction}

Partial differential equations of reduced dimension are common in
mathematical modeling, and examples include shells, membranes,
fractures, geological formations, etc.~(see e.g. \cite{1,2,3,4}). When such structures
are embedded into a higher-dimensional surrounding media, it is often advantageous to 
consider the resulting problem as a mixed-dimensional problem (also
referred to as hybrid-dimensional by some authors). To our knowledge,
such problems have until present been considered in a case by case basis,
based on the needs of the various applications. In this paper, we treat
the case of hierarchical co-dimension one, that is to say, for an
$n$-dimensional domain, we consider manifolds of dimension
$n-1$, and their intersections, again manifolds, of dimension $n-2$, $n-3$ and
so on. We refer to this geometric construction as mixed-dimensional.

In order to provide a unified theoretical framework for
mixed-dimensional partial differential equations, we use the setting of
exterior calculus, identify a suitable notion of spaces of
alternating $k$-forms for the mixed-dimensional geometry, and equip
the spaces with inner products and norms. We define a
discrete-continuous differential operator acting on alternating
$k$-forms, and its adjoint, a codifferential operator. We show that our spaces and
differential operator form a de Rham complex, with the same cohomology
spaces as the full domain.

In the application section, we show that quadratic minimization problems defined in terms of our
mixed-dimensional differential operator are well-posed. It follows that
we can write variational forms of partial differential equations,
formally consistent with the strong form of the partial differential
equations.

We note several relationships between our current work and previous literature. Firstly, in our construction, lower-dimensional manifolds are restricted such that they coincide with boundaries of manifolds of one higher (topological) dimension, and their boundaries coincide with manifolds of one lower dimension. As such, the connections between the manifolds are similar to those appearing in \v{C}ech-complexes, and we arrive at double-complexes that have much in common to the \v{C}ech-de Rham complexes \cite{6,Weil1952}. However, the details of our construction differ, as we are considering not overlaps of open sets, but actual lower-dimensional manifolds. There is therefore also a parallel to singular cohomology as presented in \cite{6}. Another connection can be found to distributional forms, and in particular the refinement of those developed by Licht \cite{11}. Our work can be seen as generalizing that work from finite-dimensional spaces on simplicial complexes (motivated by polynomial forms and finite element discretizations), to infinite-dimensional spaces (continuous functions and Sobolev spaces) on more general manifolds, not necessarily simplicial. This generalization is essential for our goal of constructing mixed-dimensional partial differential equations corresponding to those found in the cited applications. On the other hand, our constructions are more concrete than the abstract setting of distributional differential forms presented by Melrose \cite{melrose2011remark}, where there is no restriction on the locus where a distributional form fails to be represented by ordinary differential forms. This allows us to not only consider Sobolev spaces in the sense of distributional forms, but also explicitly represent these spaces in terms of classical Sobolev spaces on the separate manifolds. The analysis conducted herein provides the theoretical foundation for the research program outlined by the authors for
modeling, analysis and numerical discretization of mixed-dimensional physical problems \cite{5}.

The key novel contributions of this work are as follows: 
\begin{enumerate}[label=\arabic*)]
	\item The use of directed acyclic graphs (DAGs) to represent connections between manifolds comprising a mixed-dimensional geometry (\Cref{def:forest}).
	\item Explicit representation of Sobolev spaces for mixed-dimensional geometries in terms of classical Sobolev spaces on individual manifolds (\Cref{H-char}). 
	\item A cohomology theory extending discrete distributional forms to distributional differential forms which are locally continuous with respect to mixed-dimensional geometries (\Cref{Thm: Cohomology H}). 
	\item An explicit link between our mixed-dimensional construction (and thus implicitly also \v{C}ech-de Rham complexes and discrete distributional forms) and current research within analysis of physical models for mixed-dimensional partial differential equations in applications (\Cref{sec4}). 
\end{enumerate}

\section{Background and notation}\label{sec2}

Herein we summarize the geometric setting of interest, and the
notational conventions used. 

\subsection{Geometric setting}\label{sec2.1}
The precise description of the geometric setting is quite technical, but is required to make the calculations later in the manuscript precise. An intuitive understanding of the geometry can be obtained by considering the examples and figures.

We consider an open domain $Y \subset \mathbb{R}^n$, together with its boundary $\partial Y$, which is decomposed into two parts $\partial_N Y$ and $\partial_D Y$ such that $\partial Y = \partial_N Y \cup \partial_D Y $ and $\partial_N Y \cap \partial_D Y = \emptyset$.  We are interested in partitions into disjoint, connected manifolds $\Omega_i^d \subset Y_N$, of the domain $Y_N = Y\cup \partial_N Y$. The partition is indexed by $i\in I$ with $d_i$ the dimension, and we denote the set of indexes for which $d_i = d$ as $I^d$. We require that the manifolds $\Omega_i^{d_i}$ are diffeomorphic to bounded open sets $X_i \subset \mathbb{R}^{d_i}$. Furthermore, we require that $\overline{X}_i$ is a $d_i$-manifold with boundary $\overline{X}_i \setminus X_i$. In the special case that $X_i$ is the $d_i$-dimensional unit ball, we write $X_i = B^{d_i}$.
We require that $\cup_{i\in I} \Omega_i^{d_i} = Y_N$ disjointly, i.e that every point in $Y_N$ can be associated with a unique $\Omega_i^{d_i}$. 
We frequently omit the superscript $d_i$, i.e. write $\Omega_i = \Omega_i^{d_i}$, and refer to the collection of manifolds simply as $\Omega$.

In order to precisely discuss this geometry and the restrictions we require, we will represent it as rooted directed acyclic graphs (which we will refer to as DAGs) with coordinate maps. Thus for every domain $\Omega_i$ with index $i\in I$, let $\mathfrak{S}_i$ be the DAG associated with $i$ of maximal depth $d_i$. We will choose a global enumeration of nodes such that the root of DAG $\mathfrak{S}_i$ is node $i$. Within each DAG, we denote the descendants of any node $j$ as $I_j$. Moreover, we define the subset $I_i^{d} \subset I_i$ as all descendants $l \in I_i$ with associated dimension $d_l = d$.

In a slight abuse of terminology, we will refer to the family of all the DAGs as the forest $\mathfrak{F}$, defined as
\begin{equation}
	\mathfrak{F} = \bigcup_{i\in I} \mathfrak{S}_i.
	\label{eq2.1.2b}
\end{equation}
While we will refer to $\mathfrak{F}$ as a forest, we note that it can be represented by a star graph with each leaf corresponding to the root of DAG $\mathfrak{S}_i$ with $i \in I$. When using the interpretation of the forest as a star graph, we will use the index $0$ for the (global) root.
In order to simplify summations, we slightly abuse notation by using DAGs and the forest also as index sets, e.g. write $j \in \mathfrak{F}$ to denote the indexes of all nodes in the forest $\mathfrak{F}$.

We require that each DAG $\mathfrak{S}_i$ is endowed with a family of manifolds and maps, such that for every node $j \in I_i $, there exists an orientable manifold $X_j$ and a smooth bijective coordinate map $\phi_{i,j} : \overline{X}_j \rightarrow \overline{\partial_j X_i}$, where $\partial_j X_i\subset \partial X_i$ is the image of $\phi_{i,j}(X_j)$. For the root of the DAG $\mathfrak{S}_i$, the unique parent is the global root $0$, for which we only require that the mapping is surjective, and use the convention that $\partial_i X_0=\Omega_i$, and thus $\phi_{0,i} : \overline{X}_i \rightarrow \overline{\Omega}_i$. We require that all maps $\phi_{i,j}$ are diffeomorphisms on $X_j$.

\begin{definition}\label{def:DAG}
	A rooted DAG $\mathfrak{S}_i$ with $i\in I$, is \emph{conforming} to $\Omega_i$ if for all nodes $j \in \mathfrak{S}_i$:
\begin{enumerate} 
	\item There exists a root $s_j\in I$ such that $\phi_{0,j}(X_j) = \Omega_{s_j}$. Moreover, $s_i = i$ for the root $i$.
	\item Compound maps telescope in the sense that for every $l \in I_i$ with $j \in I_l$, it holds that
	$\phi_{i,j} = \phi_{i,l} \circ \phi_{l,j}$.   
	\item The descendants uniquely cover the parent node in the sense that \\
	$\bigcup_{j\in \mathfrak{S}_i}\phi_{i,j}(X_j) = \overline{X}_i \setminus  \phi_{0,i}^{-1}(\partial \Omega_i \cap \partial_D Y).$ \\
	In other words, each point $x_i$ in reference domain $X_i$ and on its boundary is uniquely associated to a node $j \in \mathfrak{S}_i$ and a point $x_j \in X_j$ such that $x_i = \phi_{i, j}(x_j)$. For $x_i$ on the boundary $\partial X_i$, we have $j \in I_i$ a descendant of $i$ whereas for $x_i$ in the interior of $X_i$, we have $j = i$. All points that are mapped to the physical boundary $\partial_D Y$ by $\phi_{0, i}$ are exempt from this rule.
\end{enumerate}
\end{definition}

Note that we do not require that $s_{j_1} \ne s_{j_2}$ for $j_1, j_2 \in I_i$, such that $\phi_{0, i}$ may map several sections of the boundary $\partial X_i$ to the same $\Omega_j$. This allows for the manifolds $X_i$ to be mapped to manifolds with various kinds of ``loose ends'' or ``slits'', as seen in the examples below.  Moreover, it is a consequence of the definition of a conforming DAG that for any $i\in I$ and $j\in I_i$, it holds that $I_j \subset I_i$. 

\begin{definition}\label{def:forest}
A forest $\mathfrak{F}$ is conforming to $\Omega$ if the DAGs $\mathfrak{S}_i$ are conforming to $\Omega_i$ for all $i\in I$ in the sense of Definition \ref{def:DAG}. The forest is open if $\partial_D Y = \partial Y$, and is closed if $\partial_N Y =\partial Y$. 
\end{definition}

Open forests allow for boundary conditions on $\partial Y$ to be applied to the manifolds $\Omega_i$ which extend to the boundary. Conversely, closed forests allow for differential equations to be defined on the boundary. Here, and in the following, we will only consider partitions $\Omega$ for which there exists a conforming forest.  

\begin{example}
A domain partitioned by a simplicial complex has a conforming forest.
\end{example}

\begin{example}
An allowable 2D partitioning which is not a simplicial complex is given \Cref{fig: slit}. Also in the figure is the map of the largest top-dimensional domain, as well as the full forest $\mathfrak{F}$. We note that since $X_4$ is not isomorphic to a ball, this partition is also not a cellular complex.
\end{example}

\begin{figure}[tbhp!]
\centering
\includegraphics[width=\textwidth]{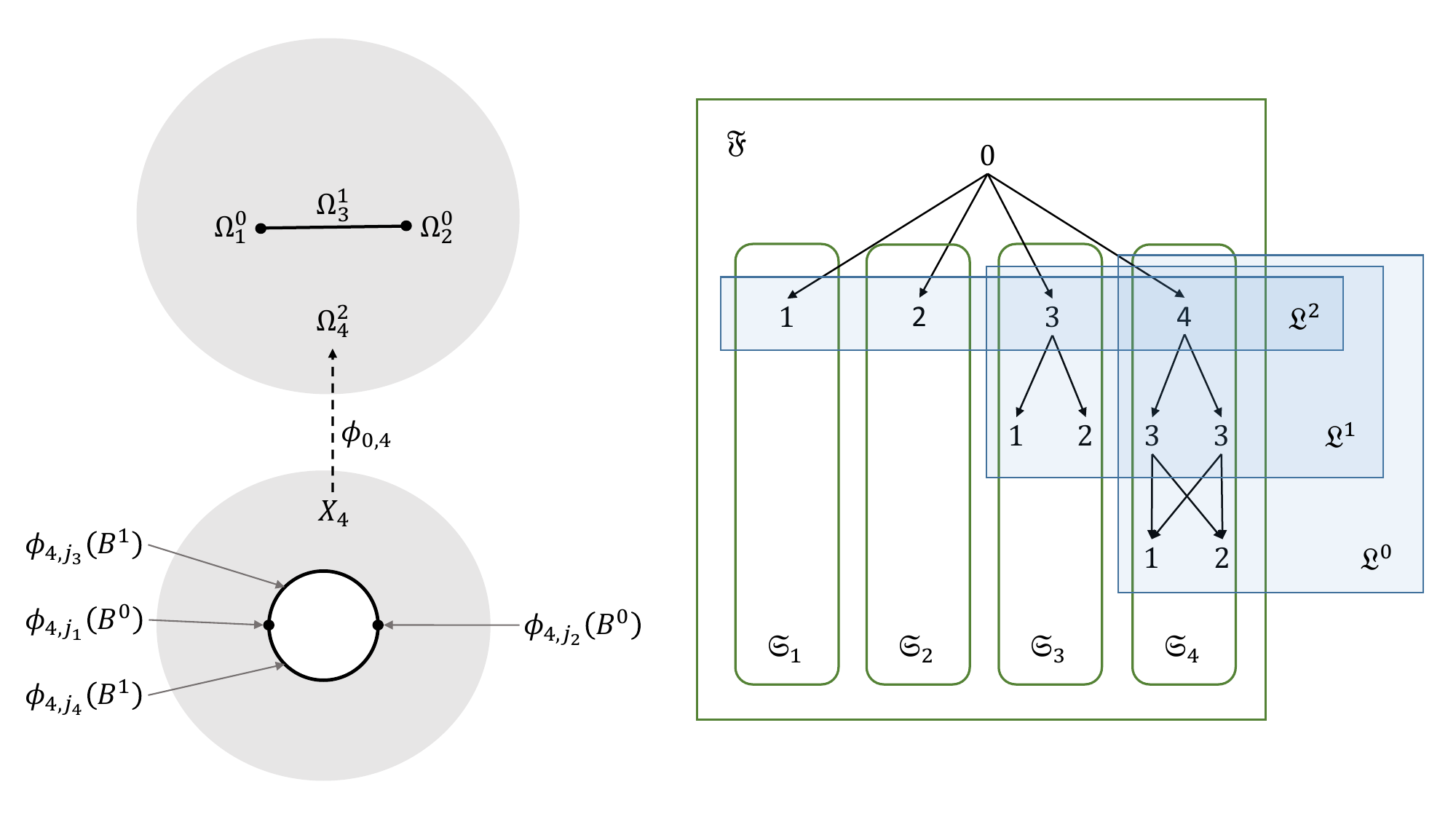}
\caption{Example geometry on an open domain in 2D (upper left); the structure of the reference domain $X_4$ and the map to $\Omega_4^2$ (lower left); and the corresponding forest $\mathfrak{F}$ (right). In this example, there are four manifolds (and DAGs), numbered such that $I^0=\{1,2\}$, $I^1=\{3\}$ and $I^2=\{4\}$. The nodal labels in the DAGs correspond to the identifications of domains $s_j$ - the global numbering of nodes is not shown. Note in particular that due to the ``slit'', $\Omega_3^1$ corresponds to two parts of the boundary of $\Omega_4^2$, denoted by $j_3$ and $j_4$ in the reference domain. Thus, both $\phi_{0, j_3}(X_{j_3}) = \Omega_3^1$ and $\phi_{0, j_4}(X_{j_4})=\Omega_3^1$ and this is reflected in the DAG $\mathfrak{S}_4$ by the double occurrence of the label $3$. We have also highlighted groups of nodes whereon function spaces will be defined in later sections.  
}\label{fig: slit}
\end{figure}

\begin{example}
	\Cref{fig: intersect} shows a three-dimensional partition in which two of the manifolds ($\Omega_9^3$ and $\Omega_8^2$) are not isomorphic to a ball. 
	As shown in the right of the figure, the embedded network corresponds to a three-dimensional hole in reference space $X_9$.
\end{example}

\begin{remark}
The previous example illustrates a limitation of our construction, as we have required the manifolds $\Omega_i^{d_i}$ to be diffeomorphic to bounded open sets in $\mathbb{R}^{d_i}$. Thus, with reference to Figure \ref{fig: intersect}, the 1-manifold $\Omega_6^1$ cannot be a closed curve, and the 0-manifold $\Omega_3^0$ is introduced. We emphasize that this limitation is for notational convenience, in order to be able to refer to a unique coordinate map for each manifold, and that our results extend to manifolds with local coordinate maps. 
\end{remark}

\begin{figure}[tbhp!]
\centering
\includegraphics[width=.9\textwidth]{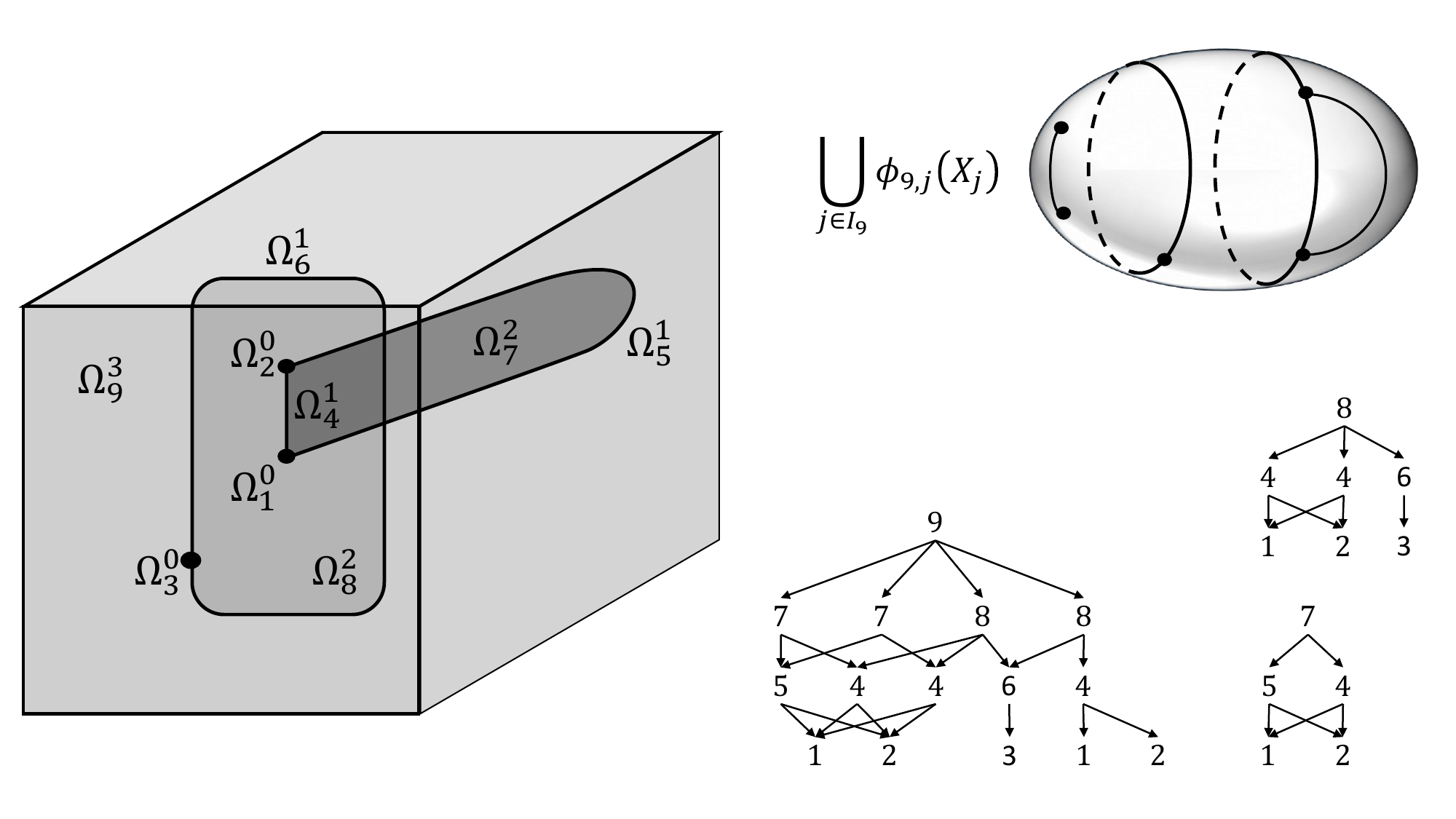}
\caption{Example of a geometry which is not a cellular complex in 3D (left). The inner boundary of $X_9^3$ (top right), and the conforming DAGs with roots in $I^2 = \{ 7, 8 \}$ and $I^3 = \{ 9 \}$ (bottom right).}
\label{fig: intersect}
\end{figure}

\begin{example}
	A self-intersecting 2D manifold in 3D is illustrated in \Cref{fig1-2}. In this example, the top-dimensional domain is not contractible and corresponds to the outside of a torus. The boundary of the top-dimensional domain (logically a torus) is shown together with its decomposition. 
\end{example}

\begin{figure}[tbhp!]
\centering
\includegraphics[width=.9\textwidth]{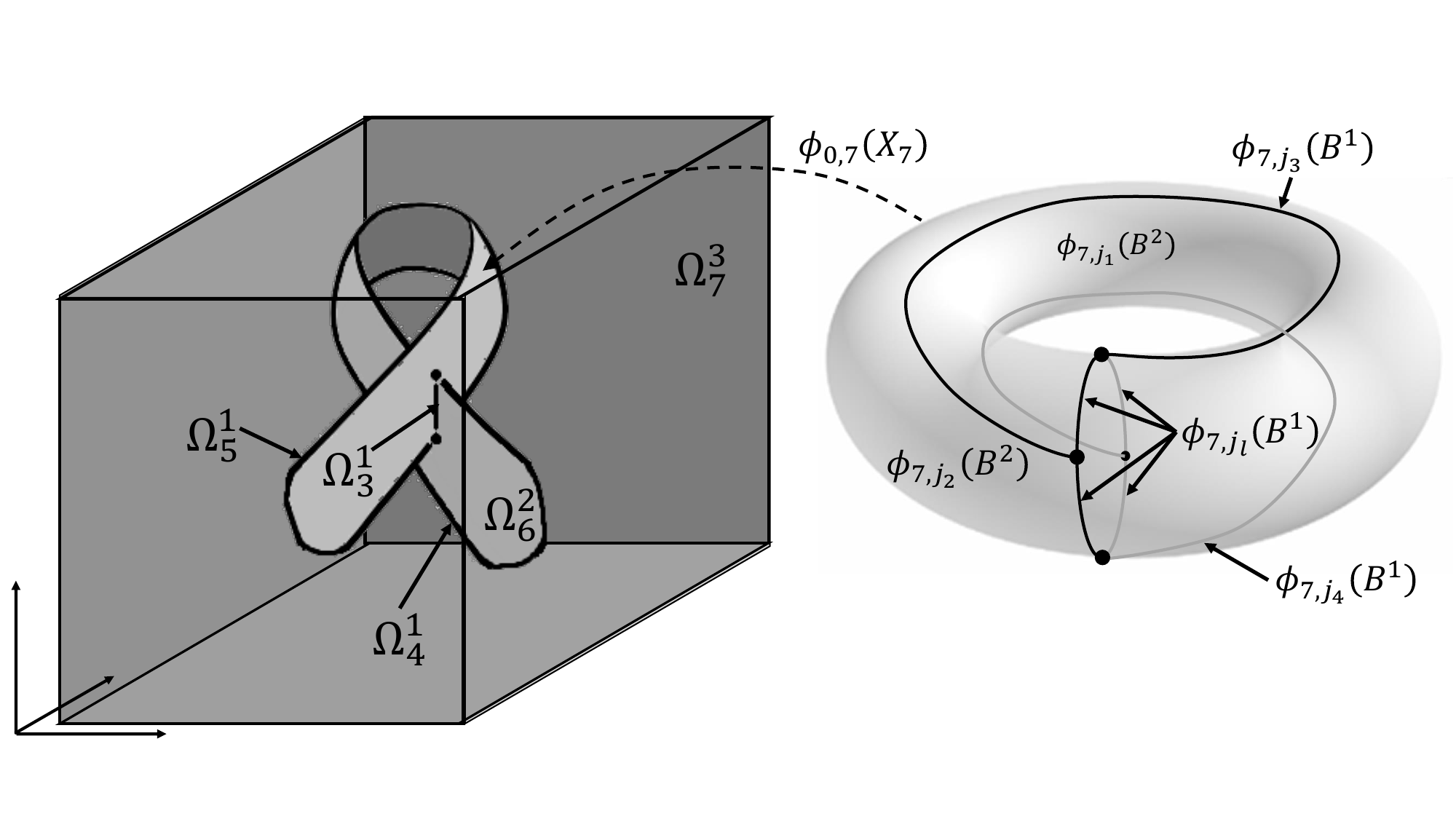}
\caption{Example of a permissible geometry in 3D (left), the partitioned inner boundary $\bigcup_{j \in I_7} \phi_{7, j}(B^{d_j}) \subset \partial X_7^3$ of the pre-image of the top-dimensional domain (right).
}\label{fig1-2}
\end{figure}

A primary object of our study will be function spaces and differential operators with respect to the domain partition $\Omega$ and the corresponding forest $\mathfrak{F}$. 

As above, we use a partial to indicate boundaries, such as
e.g.~$\partial \Omega_i$. The boundary operator on manifolds $\Omega_i$ is consistently understood in terms of the reference domains $X_i$, such that the notation $\partial_j \Omega_i$ refers to $\phi_{0, i}(\partial X_i \cap \phi_{i, j} (X_j) ) = \phi_{0, j} X_j$. Decomposition of boundaries follows the decomposition of the full domain, 
$\partial \Omega_i = \partial_i Y \bigcup_{j \in I_i}{\partial_j\Omega_i}$. 
Here $\partial_i Y = \partial_D Y \cap \partial \Omega_{s_i}$ and
$\partial_j \Omega_i$ is assumed to have the same coordinate map, up to orientation, as $\Omega_{s_j}$. We denote the
relative orientation of two such manifolds by $\varepsilon(\partial_j \Omega_i, \Omega_{s_j})$ defined as
\begin{align}
	\varepsilon(\partial_j \Omega_i, \Omega_{s_j}) 
	&= 
	\begin{cases}
		 1, & \text{if the orientations of $\partial_j \Omega_i$ and $\Omega_{s_j}$ coincide,} \\
		-1, & \text{otherwise.}
	\end{cases}
\end{align}

We make a final comment on notation. For each $j \in \mathfrak{F}$, we use a hat symbol to denote the corresponding root $\hatj \in I$ such that $j \in I_{\hatj}$.

\subsection{Fixed-dimensional exterior calculus}\label{sec2.2}

In order to fix notation, we briefly recall some results from exterior
calculus. We refer the reader to more comprehensive texts for a complete
introduction (some instructive and relevant works are \cite{6,7,8,9,10,31}, noting that in
general throughout the text we will mostly be consistent with the
notation as used in \cite{7} with the exception that for a manifold $\Omega_i$, we
denote the spaces of alternating $k$-forms
\begin{equation}
\Lambda^k(\Omega_i) 
\qquad\text{for}\qquad 
k \in \{0, 1, \ldots, d_i \}.
\label{eq2.1}
\end{equation}
We will frequently omit the dependence on the domain when no confusion
may arise. Spaces of alternating $k$-forms are connected by the wedge
product, so that for $a \in \Lambda^{k_1}$ and
$b \in \Lambda^{k_2}$, the product
$a \wedge b \in \Lambda^{k_1 + k_2}$. The wedge product has the
property that $a \wedge b = (- 1)^{k_1k_2}b \wedge a$.

We define the space of alternating $k$-forms with $m$ times differentiable coefficients as $C^m\Lambda^k$. In this work we will only consider bounded spaces of continuous functions, thus for functions in $C^m\Lambda^k$, the coefficients and their partial derivatives up to $m$-th order are taken to be finite (this space is sometimes referred to as $C^m_B\Lambda^k$, but we will omit the subscript $B$). Furthermore, under the weaker assumption that the coefficients of the alternating $k$-forms are integrable, given a basis for one-forms
$\mu_1 \ldots \mu_{d_i} \in \Lambda^1(\Omega_i)$,
we can define the weighted inner product for
$a,b \in \Lambda^k(\Omega_i)$, 

\begin{equation}
(a,b)_{\Omega_i} = \Bigl(\sum_{\sigma} {a_{\sigma} \mu_{\sigma(1)} \wedge \cdots \wedge  \mu_{\sigma(k)}},\sum_{\sigma} {b_{\sigma} \mu_{\sigma(1)} \wedge \cdots \wedge \mu_{\sigma(k)}}\Bigr)_{\Omega_i} = \int_{\Omega_i}\sum_{\sigma} {a_{\sigma} b_{\sigma} } \omega_i,
\label{eq2.2}
\end{equation}
where $\sigma$ represents all ordered selections of $k$ values from
$1\ldots d_i$, and $\omega_i \in \Lambda^{d_i}(\Omega_i)$ is
the unit volume form.

The vector spaces are pairwise dual to each other, in particular
$\Lambda^k \sim \Lambda^{d_i - k}$. Dual forms are obtained by the
Hodge star operator, such that $\mystar a \in \Lambda^k$ satisfies
\begin{equation}
	\int_{\Omega_i}{a \wedge b} = (\mystar a,b)_{\Omega_i}
	\qquad\text{for all } 
	b \in \Lambda^k. 
	\label{eq2.3}
\end{equation}
We choose the Euclidean metric in reference space $X_i$ as the underlying metric.

The inner product given in equation \eqref{eq2.2} induces a norm
 \begin{equation}
	 \| a \| = (a,a)^{1/2} 
	 \label{eq2.4}
\end{equation}
and we define the spaces of square integrable forms
\begin{equation}
	L^2\Lambda^k:\{ a \in \Lambda^k  \mid \|a\| < \infty\}.
	\label{eq2.5}
\end{equation}

For differentiable alternating forms, the exterior derivative $\myd$ maps
$\Lambda^k \rightarrow \Lambda^{k + 1}$. We define the space of
forms which have square integrable differentials as
\begin{equation}
	H \Lambda^k: \{a \in L^2\Lambda^k \mid \myd a \in L^2 \Lambda^{k + 1} \}.
\label{eq2.6}
\end{equation}
These spaces are endowed with the norm
$\| a \|_\myd = \| a \| + \| \myd a \|$.
A proper subspace of the space $H\Lambda^k$ is that which includes
natural boundary conditions with respect to the differential operator,
\begin{equation}
	\mathring{H}\Lambda^k:\{a \in H\Lambda^k \mid\Tr a = 0\}. 
	\label{eq2.7}
\end{equation}
It is important to recall that the above spaces could equivalently be defined
as the closure of $C^\infty\Lambda^k$ with respect to the stated norms \cite{rudin2006functional}. 

By definition of the exterior derivative, the following sequence is a cochain
complex, i.e.~the differential operators map
\begin{equation}
	0 \rightarrow \mathbb{R} \xrightarrow{\subset} H\Lambda^0
	\xrightarrow{\myd} H\Lambda^1 \xrightarrow{\myd}\cdots\xrightarrow{\myd} H\Lambda^d \rightarrow 0
	\label{eq2.8}
\end{equation}
and $\myd\myd a = 0$ for all $a$.

In the case of $\partial Y = \partial_D Y$, the de Rham complex is extended by
including the integral
\begin{equation}
	0 \rightarrow \mathring{H}\Lambda^0\xrightarrow{\myd}
	\mathring{H}\Lambda^1\xrightarrow{\myd}\cdots\xrightarrow{\myd}
	\mathring{H}\Lambda^d\xrightarrow{\int} \mathbb{R} \rightarrow0.
	\label{eq2.9}
\end{equation}
We refer to de Rham complexes such as \eqref{eq2.8} and \eqref{eq2.9} by the abbreviated
notation $(H\Lambda^{\bullet},\myd)$ and
$(\mathring{H}\Lambda^{\bullet},\myd)$, respectively.

For contractible domains, the function spaces on alternating $k$-forms
form an exact de Rham complex (extended in the sense of interpreting the
inclusion of constant functions as a differential operator). Thus every
closed form (i.e.~$\myd a = 0$) is exact (i.e.~$a = \myd b$ for some $b$).
This is known as the Poincar\'e Lemma. 
For general domains, the dimension
of the cohomology space will be given by the Betti numbers.

In the case where $d = n = 3$, the exterior derivative $\myd$
corresponds to $\myd \sim \{\nabla, \nabla\times, \nabla\cdot\}$
for the representatives of the $k$-forms. Furthermore, the spaces
$H\Lambda^k$ correspond to the classical spaces
$H\Lambda^0 \sim H_1$, $H\Lambda^1 \sim H(\nabla \times)$,
$H\Lambda^2 \sim H(\nabla \cdot)$, and $H\Lambda^3 \sim L^2$.
The central part of the de Rham sequence \eqref{eq2.8} takes the form:
\begin{align*}
	H_1\xrightarrow{\nabla} H(\nabla \times) \xrightarrow{\nabla \times} 
	H(\nabla \cdot)\xrightarrow{\nabla \cdot}L^2.
\end{align*}

Note that the above definitions imply, from a formal perspective, that partial integration and
Stokes' theorem are valid for $a \in H\Lambda^k$ and
$b \in H\Lambda^{d_i-k-1}$:
\begin{equation}
	\int_{\Omega_i}{\myd a \wedge b + (- 1)^ka \wedge \myd b} = 
	\int_{\Omega_i}{\myd (a \wedge b)} = 
	\int_{\partial \Omega_i}{\Tr (a \wedge b)} = 
	\int_{\partial \Omega_i}{\Tr a \wedge \Tr b}.
\label{eq2.10}
\end{equation}
In order to rigorously show that the wedge product of traces is well-defined in the final equality, a more careful treatment is needed. For the cases of primary interest in applications ($n\le3$), equation \eqref{eq2.10} can be verified explicitly. Indeed, for $k=0$ the trace spaces are duals with respect to the boundary (resp. $H^{1/2}\Lambda^0$ and $H^{-1/2}\Lambda^{n-1}$ for the trace of $H\Lambda^0$ and $H\Lambda^{n-1}$), while the case of $n=3$ and $k=1$ has been analyzed separately \cite{buffa2002traces}. We do not know of results which establish equation \eqref{eq2.10} 
rigorously for $n\geq 4$, although this result seems reasonable to conjecture. That said, our main motivation relates to physical problems, for which $n=3$, and we will therefore not elaborate further on this point. 

We will need the codifferential operator, defined as the dual of the
exterior derivative, which we denote by
$\mystar \myd^*a = (-1)^k\myd\mystar a$ for
$a \in \Lambda^k$. The codifferential induces a function space
\begin{equation}
	H^*\Lambda^k:\{ a \in L^2\Lambda^k  \mid \|\myd^*a\| < \infty\}.
\label{eq2.11}
\end{equation}
Using that
$\mystar (\mystar a) = (-1)^{k(d_i + 1)}a$,
we can now write Stokes' Theorem in terms of inner products: Let
$a \in H\Lambda^k$, and $c \in H^*\Lambda^{k + 1}$, then with
$c = \mystar b$ we calculate (with $b \in H\Lambda^{d_i-k-1})$:
\begin{equation}
	(\myd a,c)_{\Omega_i}-(a,\myd^*c)_{\Omega_i} = 
	(- 1)^{d_i k}\int_{\partial \Omega_i}{\Tr a \wedge \Tr\mystar c} = 
	(\Tr a,\Tr^*c)_{\partial \Omega_i}.
	\label{eq2.12}
\end{equation}
Here, we have introduced the dual trace (or cotrace) operator for
$e \in \Lambda^k(\Omega_i)$ such that 
\begin{equation}
	\mystar_{\partial}\Tr^*e = \Tr (\mystar e),
	\label{eq2.13}
\end{equation}
where the $\mystar_{\partial}$ is the Hodge star with respect to the
boundary. The dual trace does not appear to have standard notation, but
appears, up to sign convention, in earlier works in various forms (see e.g. \cite{11,12,30} for related
work). With our sign convention, the dual trace is commutative up to the codimension of the manifold, 
\begin{equation}
	\Tr^*_{\partial_j \Omega_i} \myd^*e = (-1)^{d_i - d_j} \myd^*\Tr^*_{\partial_j \Omega_i} e
	\label{eq2.13b}
\end{equation}

By applying \eqref{eq2.12} to $\myd\myd a$, we obtain the following
integration-by-parts formula on the boundary:
\begin{equation}
	(\Tr\myd a,\Tr^*c)_{\partial \Omega_i} + (\Tr a,\Tr^*\myd^*c)_{\partial \Omega_i} = 0.
	\label{eq2.14}
\end{equation}

For contractible domains, we have a Helmholtz decomposition, such that
for all $a \in L^2 \Lambda^k$, there exist $a_{\myd} \in H\Lambda^k$
and $a_{\myd^*} \in H^*\Lambda^k$ such that
\begin{equation}
	a = a_{\myd^*} + a_{\myd} 
	\qquad\text{while both}\qquad
	\myd^*a_{\myd^*} = 0 = \myd a_{\myd}.
	\label{eq2.15}
\end{equation}

For general domains, there may be a finite-dimensional cohomology,
with dimension given by the Betti numbers, such that we have the Hodge
decomposition
\begin{equation}
	a = a_{\myd^*} + a_{\myd} + a_0. 
	\label{eq2.16}
\end{equation}
In this case, the final term represents the cohomology class, and is a
non-trivial solution to the equations $\myd a_0 = 0 = \myd^*a_0$. For this decomposition to be unique, appropriate boundary conditions need to be imposed similar to those presented below in \Cref{thm: Hodge}.

Finally, we recall the following form of the Poincar\'e--Friedrichs
inequality: For
$a \in \mathring{H}\Lambda^k \cap H^*\Lambda^k$ or
$H\Lambda^k \cap \mathring{H}^*\Lambda^k$, it holds that
\begin{equation}
	\| a \|_{\Omega_i} \lesssim \| \myd a \|_{\Omega_i} + 
	\| \myd^*a \|_{\Omega_i} + \| a_0 \|_{\Omega_i}.
	\label{eq2.17}
\end{equation}

The results stated above represent the main tools for developing
elliptic differential equations, such as the Hodge-Laplacian (that is to
say, $\myd\myd^* + \myd^*\myd$), on manifolds. The main contribution of
this paper is to extend these results and apply them in the setting of mixed-dimensional geometries, as defined in
\Cref{sec2.1}.

\section{Differential forms}\label{sec3}

In this section we provide an extension of the exterior derivative and
the inner product to the geometry and structures of \Cref{sec2.1}, and prove properties
of the resulting operators. A main objective is to define function spaces 
on $\mathfrak{F}$ which retain the same structure as the classical function spaces of alternating
forms on $Y$. 

Semi-discrete differential operators appear in several applications. In
addition to the references cited in the introduction, similar operators
and structures to those defined in \Cref{sec3.1} have also recently been
defined in order to consider mixed-type boundary conditions in the
context of finite element exterior calculus \cite{11}. 

\subsection{Mixed-dimensional \texorpdfstring{$k$}{k}-forms}\label{sec3.1}

We are interested in differential forms over $\Omega$ which preserve
the properties known from \Cref{sec2.2}. Let us therefore first define the
integral over forests $\mathfrak{F}$, such that
\begin{equation}
\int_{\mathfrak{F}}\mathfrak{w} = \sum_{j \in \mathfrak{F}}\int_{\Omega_{s_j}} \omega_j.
\label{eq3.1}
\end{equation}
Here, we have introduced the mixed-dimensional volume form
$\mathfrak{w} = [\omega_j]_{j \in \mathfrak{F}}$. 

Recall that forms on $\Omega$ are defined with respect to $X$. I.e., a form $a \in \Lambda^k(\Omega_i)$ exists if its pull-back $\phi_{0, i}^* a \in \Lambda^k(X_i)$. More generally, since $\phi$ is a coordinate system, statements such as ``$a$ is integrable'' are always understood to mean ``$\phi^* a$ is integrable''. The same holds for statements concerning continuity and differentiability. As an immediate example, this means that
\begin{align}
	\int_{\Omega_i} a_i  &\equiv  \int_{X_i}  \phi_{0, i}^* a_i.
\end{align}

Since each
$\omega_i \in \Lambda^{d_i}(\Omega_i)$, we are motivated to define the mixed-dimensional space of $n$-forms on the forest $\mathfrak{F}$ as
\begin{equation}
\mathfrak{L}^n(\mathfrak{F}) = \prod_{i\in I}\Lambda^{d_i}(\Omega_i).
\label{eq3.2}
\end{equation}
More generally, we are interested in extending alternating $k$-forms to the mixed-dimensional
geometry. Consider therefore the following definition of alternating $k$-forms on a DAG $\mathfrak{S}_i$ for $i\in I$, 
\begin{equation}
\mathfrak{L}^k(\mathfrak{S}_i) = \Lambda^k(\Omega_i) \times \prod_{j \in I_i}
\Lambda^k(\Omega_{s_j}).
\label{eq3.3a}
\end{equation}
Here and in the following, we will use (without further comment) the convention that
$\Lambda^k(\Omega_i) = 0$ for $k \notin \{ 0, 1, \ldots, d_i \}$. Note that with this convention, we 
observe that for $k=d_i$, only the root of the DAG contributes, i.e. $\mathfrak{L}^{d_i}(\mathfrak{S}_i) = \Lambda^{d_i}(\Omega_i)$. 

By assembling over all DAGs in a forest, we then obtain mixed-dimensional alternating $k$-forms on the full forest as
\begin{equation}
\mathfrak{L}^k(\mathfrak{F}) = \prod_{i\in I}\mathfrak{L}^{k-(n-d_i)}(\mathfrak{S}_i).
\label{eq3.3}
\end{equation}
It is clear that this is a generalization of the volume forms in the sense that 
we recover equation \eqref{eq3.2} from equation \eqref{eq3.3} for $k=n$. 
From equation \eqref{eq3.3}, we note that for any root $i\in I$ and node $j\in \mathfrak{S}_i$, we denote the
degree of the associated local form in $\mathfrak{L}^k$ by $k_j = k-(n-d_i)$. It follows that $k_j$ only depends on the dimension of its root $i$, 
and not on $d_j$. 

We will consistently use Gothic letters for mixed-dimensional functions and
spaces on forests or DAGs, such as
$\mathfrak{a} \in\mathfrak{L}^k(\mathfrak{F})$, with the natural decomposition $\mathfrak{a} = [\mathfrak{a}_i]$ where  $\mathfrak{a}_i\in \mathfrak{L}^{k_i}(\mathfrak{S}_i)$. 
We use $\iota_j$ to denote the forms associated with node $j\in \mathfrak{F}$, such that $\iota_j :\mathfrak{L}^k(\mathfrak{F}) \rightarrow \Lambda^{k_j}(\Omega_{s_j})$. 
We will revert to regular Latin font for the fixed-dimensional alternating forms $a_j=\iota_j \mathfrak{a}$.

We will use three different spaces of forms on forests. These spaces generalize notions of square integrable functions, locally continuous functions, and weakly differentiable functions. First, we introduce $L^2$ functions over the mixed-dimensional structures.
\begin{definition}\label{def: L2} 
Let the space of square integrable $k$-forms over $\mathfrak{F}$ be denoted $L^2 \mathfrak{L}^k(\mathfrak{F})$ and defined as
\begin{equation*}
	L^2 \mathfrak{L}^k(\mathfrak{F}) : \{\mathfrak{a}\in\mathfrak{L}^k(\mathfrak{F}) \mid 
	a_j \in L^2 \Lambda^{k_j}(\Omega_{s_j})\quad
	\forall\ j \in \mathfrak{F} \}.
\end{equation*}
\end{definition}
The space $L^2 \mathfrak{L}^k(\mathfrak{F})$ has an inner product defined for $\mathfrak{a}, \mathfrak{b} \in L^2\mathfrak{L}^k(\mathfrak{F})$
\begin{equation}
	(\mathfrak{a},\mathfrak{b})_{\mathfrak{F}} = 
	\sum_{j\in \mathfrak{F}} (a_j,b_j)_{\Omega_{s_j}} 
	\label{eq:inner product}
\end{equation}
and the inner product induces a norm on $L^2\mathfrak{L}^k(\mathfrak{F})$
\begin{equation}\label{eq:L2norm}
	\|\mathfrak{a}\| = (\mathfrak{a},\mathfrak{a})_{\mathfrak{F}}^{1/2}.
\end{equation}

\begin{remark}
	The definition of the inner product naturally depends on the underlying metric. Since all $X_i$ are embedded in $\mathbb{R}^{d_i}$, we use the Euclidean metric in reference space.
\end{remark}

As in the fixed-dimensional case, in order to treat degenerate coefficients as appear
in different physical regimes, weighted inner products may be desirable in
applications (for a more detailed discussion, see \Cref{sec4} and also \cite{13}).

\begin{lemma}\label{lem: inner product} 
	The inner product defined in equation \eqref{eq:inner product} is
	symmetric, linear, and positive-definite.
\end{lemma}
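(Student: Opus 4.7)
The plan is to reduce each of the three properties directly to the corresponding properties of the fixed-dimensional $L^2\Lambda^{k_j}(\Omega_{s_j})$ inner product, which are standard (recall \eqref{eq2.2}), combined with the elementary fact that a finite sum of symmetric, bilinear, or positive (semi-)definite forms retains those properties. Since $\mathfrak{F}$ is a forest indexing a finite family of submanifolds, the sum in \eqref{eq:inner product} has only finitely many terms, and each term is finite by the Cauchy--Schwarz inequality applied to the fixed-dimensional inner product on $\Omega_{s_j}$.

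First I would verify symmetry: for each $j\in\mathfrak{F}$, the fixed-dimensional inner product satisfies $(a_j,b_j)_{\Omega_{s_j}}=(b_j,a_j)_{\Omega_{s_j}}$, so summing over $j$ yields $(\mathfrak{a},\mathfrak{b})_{\mathfrak{F}}=(\mathfrak{b},\mathfrak{a})_{\mathfrak{F}}$. For bilinearity, I would take $\mathfrak{a},\mathfrak{b},\mathfrak{c}\in L^2\mathfrak{L}^k(\mathfrak{F})$ and scalars $\alpha,\beta\in\mathbb{R}$, observe that $\iota_j(\alpha\mathfrak{a}+\beta\mathfrak{b})=\alpha a_j+\beta b_j$ by the componentwise vector-space structure of $\mathfrak{L}^k(\mathfrak{F})$ inherited from \eqref{eq3.3a}--\eqref{eq3.3}, and then use linearity of each $(\cdot,\cdot)_{\Omega_{s_j}}$ in each slot. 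Symmetry then promotes linearity in the first slot to bilinearity.

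For positive-definiteness, non-negativity is immediate: each $(a_j,a_j)_{\Omega_{s_j}}=\|a_j\|_{\Omega_{s_j}}^2\geq 0$, so $(\mathfrak{a},\mathfrak{a})_{\mathfrak{F}}\geq 0$. The definiteness direction is the only place where one has to be slightly careful: if $(\mathfrak{a},\mathfrak{a})_{\mathfrak{F}}=0$, then since each summand is non-negative, $\|a_j\|_{\Omega_{s_j}}=0$ for every $j\in\mathfrak{F}$. Positive-definiteness of the fixed-dimensional $L^2$ inner product then yields $a_j=0$ in $L^2\Lambda^{k_j}(\Omega_{s_j})$ for each $j$, hence $\mathfrak{a}=0$ in $L^2\mathfrak{L}^k(\mathfrak{F})$ by the product structure \eqref{eq3.3}.

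The hardest part, if anything, is purely bookkeeping: keeping consistent track of the convention $k_j=k-(n-d_i)$ with $i$ the root above $j$, together with the rule $\Lambda^{k_j}(\Omega_{s_j})=0$ whenever $k_j\notin\{0,\ldots,d_{s_j}\}$. For such nodes the corresponding summand in \eqref{eq:inner product} is trivially zero and contributes nothing to any of the three arguments, so no separate treatment is required. There is no analytic subtlety beyond the fixed-dimensional theory recalled in \cref{sec2.2}.
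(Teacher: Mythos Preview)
Your proposal is correct and is exactly the routine verification the paper has in mind; in fact the paper states this lemma without proof, treating the properties as immediate from the product structure and the standard fixed-dimensional $L^2$ inner product, which is precisely what your argument makes explicit.
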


\begin{lemma}\label{lem: L2L Hilbert}
	The mixed-dimensional space $L^2\mathfrak{L}^k(\mathfrak{F})$ is a Hilbert space.
\end{lemma}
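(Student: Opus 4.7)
The plan is to reduce the claim to the well-known fact that finite (or, if necessary, countable) products of Hilbert spaces are Hilbert spaces, and to use that each factor $L^2\Lambda^{k_j}(\Omega_{s_j})$ is itself a Hilbert space under the classical fixed-dimensional inner product recalled in \cref{sec2.2}. By \cref{lem: inner product}, we already know that \eqref{eq:inner product} turns $L^2\mathfrak{L}^k(\mathfrak{F})$ into an inner-product space, so only completeness with respect to the norm \eqref{eq:L2norm} remains to be established.

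First, I would observe the component-wise domination
\begin{equation*}
  \|a_j\|_{\Omega_{s_j}}^2 \le \|\mathfrak{a}\|^2 = \sum_{j\in\mathfrak{F}} \|a_j\|_{\Omega_{s_j}}^2,
\end{equation*}
which is immediate from \eqref{eq:inner product} and \eqref{eq:L2norm}, together with the fact that every summand is nonnegative. In particular, each projection $\iota_j : L^2\mathfrak{L}^k(\mathfrak{F}) \to L^2\Lambda^{k_j}(\Omega_{s_j})$ is a bounded linear operator of norm at most one.

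Next, given any Cauchy sequence $\{\mathfrak{a}^{(m)}\}_{m\in\mathbb{N}} \subset L^2\mathfrak{L}^k(\mathfrak{F})$, the above domination shows that for each fixed $j\in\mathfrak{F}$ the projected sequence $\{a_j^{(m)}\} := \{\iota_j\mathfrak{a}^{(m)}\}$ is Cauchy in $L^2\Lambda^{k_j}(\Omega_{s_j})$. Completeness of the classical $L^2$-space of forms then produces a limit $a_j\in L^2\Lambda^{k_j}(\Omega_{s_j})$. Define $\mathfrak{a}\in\mathfrak{L}^k(\mathfrak{F})$ by $\iota_j\mathfrak{a} = a_j$ for all $j\in\mathfrak{F}$.

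The main obstacle is to verify two points simultaneously: that the candidate limit $\mathfrak{a}$ actually lies in $L^2\mathfrak{L}^k(\mathfrak{F})$, and that $\mathfrak{a}^{(m)}\to\mathfrak{a}$ in the mixed-dimensional norm rather than only componentwise. Under the standing assumption that $I$ (and hence $\mathfrak{F}$) is finite, both follow at once, since the finite sum defining $\|\mathfrak{a}^{(m)}-\mathfrak{a}\|^2$ converges to zero by linearity of the limit. If one wished to allow a countable forest, the standard workaround is to fix $\varepsilon>0$, choose $M$ so that $\|\mathfrak{a}^{(m)}-\mathfrak{a}^{(m')}\|<\varepsilon$ for $m,m'\ge M$, truncate the sum over $\mathfrak{F}$ to a finite subset $\mathfrak{F}_N$, pass to the limit $m'\to\infty$ on this truncation using the componentwise $L^2$-convergence just established, and finally let $N\to\infty$ by monotone convergence; this bounds $\|\mathfrak{a}^{(m)}-\mathfrak{a}\|^2\le\varepsilon^2$, which simultaneously shows $\mathfrak{a}^{(m)}-\mathfrak{a}\in L^2\mathfrak{L}^k(\mathfrak{F})$ and hence $\mathfrak{a}\in L^2\mathfrak{L}^k(\mathfrak{F})$, together with norm convergence. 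Either way, completeness is obtained, and the result follows.
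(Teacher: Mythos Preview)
Your proof is correct and follows essentially the same approach as the paper: use \cref{lem: inner product} to get a pre-Hilbert space, then obtain completeness from the product structure of \cref{def: L2} together with completeness of each fixed-dimensional factor $L^2\Lambda^{k_j}(\Omega_{s_j})$. The paper condenses this to a single sentence, whereas you spell out the componentwise Cauchy argument; your discussion of the countable case is unnecessary here since the paper treats $\mathfrak{F}$ as finite (cf.\ the use of $|\mathfrak{F}|$ in the proof of \cref{thm: C-dense-L2}).
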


\begin{proof}
	Due to the inner product \eqref{eq:inner product}, $L^2\mathfrak{L}^k(\mathfrak{F})$ is pre-Hilbert.
	Completeness follows by the product structure given by \Cref{def: L2}.
\qed
\end{proof}

\subsection{Strongly differentiable \texorpdfstring{$k$}{k}-forms}\label{sec3.2}

We first define a notion of locally continuous forms. 
\begin{definition}\label{def:cont} 
	Let the space of locally continuous mixed-dimensional $k$-forms over $\mathfrak{F}$ be denoted $C \mathfrak{L}^k(\mathfrak{F})$, defined such that 
	\begin{align}
		C \mathfrak{L}^k(\mathfrak{F}) : \{\mathfrak{a} \in \mathfrak{L}^k(\mathfrak{F}) \mid &
		a_j \in C^\infty \Lambda^{k_j}(\Omega_{s_j}) \text{ and } \nonumber\\
		& 
		a_j = \varepsilon(\Omega_{s_j},\partial_{j} \Omega_i) \Tr_{\partial_j \Omega_i} a_i  \quad
		\forall\ i\in I\ \text{ and }\ j\in \mathfrak{S}_i 
		 \}.
	\end{align}
	We denote by $\mathring{C} \mathfrak{L}^k$ the subset of forms $\mathfrak{a}\in C \mathfrak{L}^k$ such that $\Tr a_i=0$ on $\partial_i Y$. 
\end{definition}

\begin{lemma}
	On each DAG $\mathfrak{S}_i$ with $i\in I$, the space $C \mathfrak{L}^k(\mathfrak{S}_i)$ is isomorphic to
	$C^\infty \Lambda^k_i\big(\overline{X}_i \setminus  \phi_{0,i}^{-1}(\partial_i Y)\big)$.
\end{lemma}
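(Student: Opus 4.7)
The plan is to construct an explicit linear bijection between the two spaces, realized by restricting to (respectively gluing across) the stratification of $\overline{X}_i$ indexed by the nodes of the tree $\mathfrak{S}_i$.

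For the forward map, given $a \in C^\infty \Lambda^k(\overline{X}_i \setminus \phi_{0,i}^{-1}(\partial_i Y))$, I would define $a_j$ for each $j \in \mathfrak{S}_i$ by restricting $a$ to the stratum $\phi_{i,j}(X_j) \subset \overline{X}_i$, transporting it to $\Omega_{s_j}$ via $\phi_{0,j}$, and multiplying by the orientation factor $\varepsilon(\Omega_{s_j}, \partial_j \Omega_i)$ to account for the possible mismatch between the intrinsic orientation of $\Omega_{s_j}$ and the induced orientation of $\partial_j \Omega_i$. Using the telescoping property $\phi_{0,j} = \phi_{0,i} \circ \phi_{i,j}$ from item~1 of \cref{def:tree}, this operation is equivalent to pushing $a$ forward to $\overline{\Omega}_i$ by $\phi_{0,i}$ and then taking the trace onto $\partial_j \Omega_i$, so the trace consistency condition of \cref{def:cont} holds by construction. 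For the inverse map, given $\mathfrak{a} = (a_j)_{j \in \mathfrak{S}_i}$, I would build $a$ stratum by stratum, placing the appropriately signed pullback of $a_j$ via $\phi_{0,j}$ on each $\phi_{i,j}(X_j)$. The covering condition (item~3 of \cref{def:tree}) ensures these pieces exhaust $\overline{X}_i \setminus \phi_{0,i}^{-1}(\partial_i Y)$, the trace consistency condition ensures continuous matching across every codimension-one interface, and telescoping propagates this matching to corners of arbitrary codimension. Mutual inverseness follows by inspection; indeed, $a_i$ alone already determines $a$, with every other $a_j$ recoverable as an iterated trace.

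The main obstacle is verifying $C^\infty$ regularity of the glued form across the stratification, as opposed to mere continuity. Here I would rely on the convention from \cref{sec2.2} that $C^\infty \Lambda^k$ consists of forms whose coefficients and all partial derivatives are bounded: under this convention, $a_i$ extends smoothly to $\overline{\Omega}_i$, so that transverse derivatives at the boundary are intrinsic to the datum $a_i$ rather than free parameters. The trace consistency conditions then only pin down tangential boundary values, and by recursion on the depth of $\mathfrak{S}_i$ these propagate consistently to all higher-codimension corners without imposing any further compatibility constraints beyond those already enforced by the telescoping of coordinate maps.
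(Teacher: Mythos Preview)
Your approach is correct and aligns with the paper's proof, which is a one-liner citing \cref{def:tree}, the coordinate maps $\phi_{0,i}$, and an external reference. Your explicit construction via restriction and extension---together with the key observation that $a_i$ alone determines the full tuple, with the bounded-derivatives convention guaranteeing smooth extension of $\phi_{0,i}^*a_i$ to $\overline{X}_i$---is precisely the content the paper leaves implicit.
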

\begin{proof}
	Follows from \Cref{def:DAG} and the defined maps $\phi_{0,i}$ \cite{31}.
\qed
\end{proof}

Locally continuous forms are interpreted as (bounded) continuous $k_i$-forms on each $\Omega_i$, for $i\in I$, with continuous extensions onto the boundaries as appropriate. We provide two examples, as illustrated for $n = 1$ in \Cref{fig:Ex1}. 

\begin{figure}[tbhp!]
\centering
\includegraphics[width=.5\textwidth]{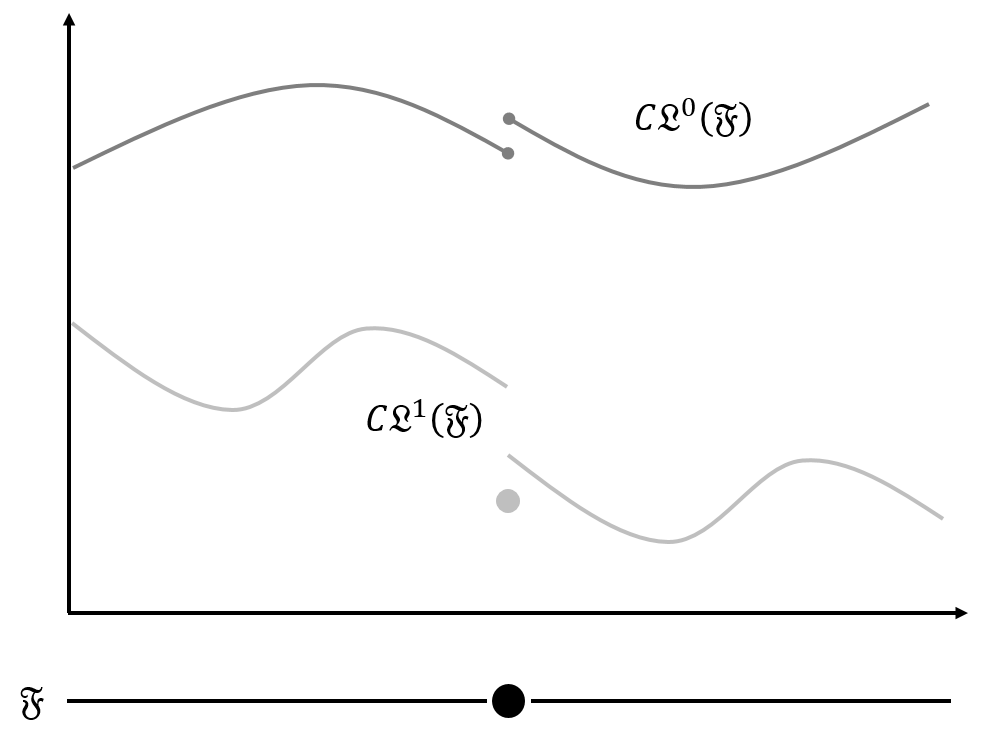}
\caption{Example of locally continuous functions in 1D. In this illustration, we have below the diagram indicated a domain decomposed into two 1D segments separated by a 0D domain. The functions belonging to $C \mathfrak{L}^0$ and $C \mathfrak{L}^n$ are further described in \Cref{ex: CL0} and \Cref{ex: CLn}, respectively.
}\label{fig:Ex1}
\end{figure}

\begin{example} \label{ex: CL0}
	For $\mathfrak{a} \in C \mathfrak{L}^0(\mathfrak{F})$ and $i\in I^n$, we have that $a_i$ is a continuous, infinitely differentiable, function on $\Omega_i$. Discontinuities are permitted across all $\Omega_j$ for $j\in I^d$ where $d<n$, and the forms on $\Omega_j$ are void. In contrast, for all $l\in I_i$ the forms on $\partial_l \Omega_i$ are continuous and appropriately interpreted as traces of $a_i$ (up to sign). 
\end{example}

\begin{example} \label{ex: CLn}
	The volume forms $\mathfrak{a}\in C \mathfrak{L}^n(\mathfrak{F})$ are piecewise continuous functions, in the sense that $a_i$ are infinitely differentiable 
	on each $\Omega_i$, for $i\in I$. Moreover, all $a_i$ become volume forms of the appropriate dimension. Thus, for all $l \in I_i$, the forms $a_l$ are void. 
\end{example}

\begin{theorem}\label{thm: C-dense-L2}
	The space $C \mathfrak{L}^k$ is dense in $L^2\mathfrak{L}^k$ with respect to the norm \eqref{eq:L2norm}. 
\end{theorem}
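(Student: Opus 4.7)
The plan is to exploit the product structure of the forest to reduce the claim to density on individual trees, and then within each tree to build an explicit approximant by splitting it into a "boundary layer" piece carrying the trace data and an "interior" piece supported away from $\partial X_i$. Since $\mathfrak{L}^k(\mathfrak{F}) = \prod_{i\in I} \mathfrak{L}^{k_i}(\mathfrak{S}_i)$ and the inner product in \eqref{eq:inner product} is the direct sum of the tree-wise inner products, and since the continuity conditions in \cref{def:cont} only tie together components within a single tree (each condition involves a root $i\in I$ and $j\in\mathfrak{S}_i$), it suffices to fix a single root $i\in I$ and approximate an arbitrary $\mathfrak{a} = (a_i, \{a_j\}_{j\in\mathfrak{S}_i\setminus\{i\}})\in L^2\mathfrak{L}^{k_i}(\mathfrak{S}_i)$, with the $a_j\in L^2\Lambda^{k_i}(\Omega_{s_j})$ chosen independently, by the trace $(\tilde a, \{\pm\Tr_{\partial_j \Omega_i}\tilde a\})$ of a single smooth $\tilde a \in C^\infty\Lambda^{k_i}(\overline X_i\setminus \phi_{0,i}^{-1}(\partial_i Y))$.

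Fix $\epsilon>0$. First I would handle the boundary data. For each node $j\in\mathfrak{S}_i\setminus\{i\}$ such that $k_i\le d_{s_j}$ (otherwise $a_j=0$ trivially), I use density of compactly-supported smooth forms in $L^2\Lambda^{k_i}(X_{s_j})$ to pick $\tilde a_j\in C^\infty\Lambda^{k_i}(\overline{X}_{s_j})$, with $\tilde a_j$ supported away from $\partial X_{s_j}$, satisfying $\|\tilde a_j - a_j\|_{L^2(\Omega_{s_j})} < \epsilon/(2|\mathfrak{S}_i|)$. Transferring along $\phi_{i,j}$ and using the coordinate collar of the face $\partial_j X_i\subset\partial X_i$, I then construct an extension $b_j^{\mathrm{ext}}\in C^\infty\Lambda^{k_i}(\overline X_i)$ of $\varepsilon(\Omega_{s_j},\partial_j\Omega_i)\tilde a_j$ by multiplying the pullback of $\tilde a_j$ along the collar projection by a smooth cutoff $\chi_\delta$ in the transverse coordinate that equals $1$ on $\partial_j X_i$ and vanishes for collar distance $\ge\delta$. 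The key point is that, because $\tilde a_j$ is compactly supported in $X_{s_j}$, the support of $b_j^{\mathrm{ext}}$ stays away from the lower-dimensional strata where $\partial_j X_i$ meets the other faces $\partial_l X_i$; hence $\Tr_{\partial_l \Omega_i} b_j^{\mathrm{ext}}=0$ for every $l\neq j$ provided $\delta$ is small enough, and moreover $\|b_j^{\mathrm{ext}}\|_{L^2(\Omega_i)}^2 \lesssim \delta\,\|\tilde a_j\|_{L^2(\Omega_{s_j})}^2$. Shrinking $\delta$ makes these interior norms arbitrarily small.

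Setting $b^{\mathrm{bdry}} = \sum_{j} b_j^{\mathrm{ext}}$ I obtain a smooth form on $\overline X_i$ whose trace on each $\partial_j\Omega_i$ is $\varepsilon(\Omega_{s_j},\partial_j\Omega_i)\tilde a_j$ and whose $L^2$ norm on $\Omega_i$ can be forced below $\epsilon/2$. I next apply the classical density of $C_c^\infty\Lambda^{k_i}(X_i)$ in $L^2\Lambda^{k_i}(\Omega_i)$ to pick $b^{\mathrm{int}}\in C_c^\infty\Lambda^{k_i}(X_i)$ with $\|b^{\mathrm{int}} - (a_i - b^{\mathrm{bdry}})\|_{L^2(\Omega_i)} < \epsilon/2$. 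The candidate approximant is $\tilde a := b^{\mathrm{int}}+b^{\mathrm{bdry}}\in C^\infty\Lambda^{k_i}(\overline X_i)$. Because $b^{\mathrm{int}}$ is compactly supported in $X_i$ its traces on each $\partial_j\Omega_i$ vanish, so via the isomorphism of the lemma preceding this theorem, $\tilde a$ corresponds to an element of $C\mathfrak{L}^{k_i}(\mathfrak{S}_i)$ whose component at node $j$ is $\tilde a_j$, $\epsilon$-close to $a_j$, and whose top component $\tilde a|_{\Omega_i}$ is $\epsilon$-close to $a_i$ by construction. Squaring and summing over all nodes yields $\|\mathfrak{a} - \mathcal{T}\tilde a\|<\epsilon$ (up to a harmless constant depending on $|\mathfrak{S}_i|$ absorbed by rescaling $\epsilon$), completing the density argument.

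The principal obstacle I expect is the collar/cutoff construction near the lower-dimensional strata of $\partial X_i$: even though $\overline X_i$ is required to be a manifold with boundary and the maps $\phi_{i,j}$ are smooth, the faces $\partial_j X_i$ meet along corners of arbitrary angle, so the collars of different faces overlap near those strata. The reason my construction still works is that I force $\tilde a_j$ to vanish near $\partial X_{s_j}$, which translates (via $\phi_{i,j}$) to vanishing near the images of those corners inside $\partial_j X_i$, so the collar extensions of distinct faces become disjointly supported as $\delta$ shrinks. Verifying that such a simultaneously compactly supported smooth approximation of the family $\{a_j\}$ can be produced — and that the transverse collar cutoff is compatible with the product structure of the coordinate maps $\phi_{i,j}$ and their telescoping property in \cref{def:tree} — is the only nontrivial geometric input; everything else is the standard interior density of $C_c^\infty$ in $L^2$.
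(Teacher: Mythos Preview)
Your reduction to a single tree and the split into a boundary-extension piece plus an interior $C_c^\infty$ correction is exactly the paper's idea. The gap is in the boundary step when the tree has depth $\geq 2$, i.e., when $k_i \leq d_i - 2$ so that there are nodes $j\in\mathfrak{S}_i$ with $d_{s_j}\leq d_i-2$. For such a node, your claim that $\Tr_{\partial_l\Omega_i} b_j^{\mathrm{ext}}=0$ for every $l\neq j$ is false whenever $l$ is an \emph{ancestor} of $j$ in $\mathfrak{S}_i$: then $\partial_j X_i$ is not a face meeting $\partial_l X_i$ along a lower-dimensional stratum, it \emph{is} a lower-dimensional stratum of $\overline{\partial_l X_i}$, so every tubular neighbourhood of $\partial_j X_i$ in $X_i$ meets $\partial_l X_i$ regardless of how small $\delta$ is or how compactly supported $\tilde a_j$ is. In the local model $\{x_1,x_2\geq 0\}\subset\mathbb{R}^{d_i}$ with $\partial_j X_i=\{x_1=x_2=0\}$ and $\partial_l X_i=\{x_1=0\}$, there is no continuous extension that equals $1$ at the corner yet vanishes along the edge $\{x_1=0,\ x_2>0\}$. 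Your discussion of the ``principal obstacle'' (and the singular ``transverse coordinate'') shows you are only seeing the sibling case, where compact support of $\tilde a_j$ indeed separates the collars; the ancestor case is different in kind.

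The paper handles this by recursing from the leaves upward: it first approximates at the nodes with $d_j=k_j$ (where all descendants are void), then for each $j$ extends the already-constructed smooth boundary data from $\partial X_j$ into $X_j$ by \emph{one} codimension and corrects in the interior, iterating until the root is reached. This guarantees that at every stage the traces on \emph{all} descendants are exactly the previously chosen $b_{j'}$. Your flat scheme can in fact be repaired---the parasitic trace of $b_j^{\mathrm{ext}}$ on an ancestor face has $L^2$ norm $O(\delta^{1/2})$, so one can absorb it into the error---but as written the key assertion is wrong and that control is not supplied.
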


\begin{proof}
	We provide a constructive proof exploiting the product structure of $L^2\mathfrak{L}^k$. Let $\mathfrak{a}\in L^2\mathfrak{L}^k$, and we will construct $\mathfrak{b}\in C \mathfrak{L}^k$ such that $\|\mathfrak{a-b}\|<\epsilon$, for any $\epsilon>0$. Throughout the proof, we will work on the reference spaces $X_i$ in the sense of \Cref{sec2.1}. Furthermore, let $\epsilon' = \epsilon/|\mathfrak{F}|$, where $|\mathfrak{F}|$ is the number of nodes in $\mathfrak{F}$. 

	Consider first manifolds $j\in \mathfrak{F}$ such that $d_j = k_j$. Then for all $j'\in I_j$, $d_{j'}<d_j=k_j=k_{j'}$, and thus the local forms $a_{j'}$ for $j'$ are all void, while the local forms $a_j$ are in the usual spaces $L^2 \Lambda^{k_j}(X_j)$. Since $C^\infty \Lambda^{k_j}(X_j)$ is dense in $L^2 \Lambda^{k_j}(X_j)$, we can choose $b_j\in C^\infty \Lambda^{k_j}(X_j)$ such that $\|a_j-b_j\|_{X_j}<\epsilon'$. 

	Proceeding recursively, let $j\in \mathfrak{F}$ such that $b_{j'}$ is defined for all $j'\in I_j$. Thus, since $\partial X = \bigcup_{j'\in I_j} \phi_{j,j'}(X_{j'})$, we can construct smooth extensions of the boundary data $b_{j'}$ into $X_j$ \cite{12}. Let $c_j$ be any such smooth extension, such that $\varepsilon(\Omega_{s_{j'}},\partial_{j'} \Omega_{s_j}) \Tr_{\partial_{j'} X_j} c_j = \phi_{j,j'} b_{j'}$. Then we construct $a'_j = a_j - c_j \in L^2 \Lambda^{k_j}(X_j)$, and due to the density of $C_0^\infty$ in $L^2$, we can choose $b'_j\in C_0^\infty \Lambda^{k_j}(X_j)$ such that  $\|a'_j-b'_j\|_{X_j}<\epsilon'$. We then define $b_j = b'_j+c_j \in C^\infty \Lambda^{k_j}(X_j)$, and it follows trivially that $\|a_j-b_j\|_{X_j}<\epsilon'$. 

	Finally, let $\mathfrak{b}\in \mathfrak{L}^k$ be defined such that $\iota_j \mathfrak{b} = b_j$. By construction, $\|\mathfrak{a-b}\| < \epsilon$, and it remains to verify that $\mathfrak{b}\in C \mathfrak{L}^k$. With reference to \Cref{def:cont}, this holds, since all $\iota_j \mathfrak{b}\in C^\infty \Lambda^{k_j}(X_j)$, and since $\iota_j \mathfrak{b} = \varepsilon(\Omega_{s_j},\partial_{j} \Omega_i) \Tr_{\partial_{j} \Omega_i} \iota_i \mathfrak{b}$ by the construction above.
\qed
\end{proof}

For continuous mixed-dimensional $k$-forms, we can define an appropriate exterior derivative in the same sense as \cite{11,5}. 
\begin{definition}\label{def:strongdifferential} 
	For $\mathfrak{a}\in C \mathfrak{L}^k(\mathfrak{F})$, let the strong form of the mixed-dimensional exterior derivative $\mathfrak{d}\colon C \mathfrak{L}^k(\mathfrak{F}) \rightarrow C \mathfrak{L}^{k+1}(\mathfrak{F})$ be defined for all $j \in \mathfrak{F}$ as
	\begin{equation*}
		\iota_j (\mathfrak{d}\mathfrak{a}) = \myd a_j + \iota_j (\Bbbd \mathfrak{a}),
	\end{equation*}
	where the discrete differential operator 
	$\Bbbd\colon C \mathfrak{L}^k(\mathfrak{F}) \rightarrow C \mathfrak{L}^{k+1}(\mathfrak{F})$
	is defined on the roots $i \in I$ by
	\begin{equation}
		\iota_i (\Bbbd \mathfrak{a}) = 
		(-1)^{n - k} \sum_{\{l \in \mathfrak{F}| s_l = i \text{ and } \hat{l} \in I^{d_i + 1} \}} \varepsilon(\Omega_i, \partial_l \Omega_{\hat{l}}) a_l
		\label{eq3.8}
	 \end{equation}
	and subsequently on all branches $j \in I_i$ as
	\begin{align}
		\iota_j (\Bbbd \mathfrak{a}) = \varepsilon(\Omega_{s_j},\partial_{j} \Omega_i) \Tr_{\partial_{j} \Omega_i} \iota_i (\Bbbd \mathfrak{a}) \label{eq3.8b}
	\end{align}
\end{definition}
\begin{remark}
This definition is equivalent to constructing a single graded complex from the anti-diagonals of the double complex induced by $\myd$ and $\Bbbd$, as elaborated in \cite{6}. In order to be self-contained, we provide the explicit construction here.
\end{remark}

Due to the continuity properties of $C \mathfrak{L}^k$, the discrete operator $\Bbbd$ can be expressed locally for each node $j$ in the forest $\mathfrak{F}$. The local summation is then performed over an index set $\gamma_j^{-1}$ which we define below.

\begin{corollary}
	For each $j \in \mathfrak{F}$, we can define a minimal set of indices, denoted $\gamma_j^{-1}$, such that for all $\mathfrak{a} \in C \mathfrak{L}^k$, it holds that
	\begin{align*}
		\iota_j (\Bbbd \mathfrak{a}) &= (-1)^{n - k} \sum_{l \in \gamma_j^{-1}} \varepsilon_{jl} a_l
	\end{align*}
	with $\varepsilon_{jl} \in \{ \pm 1 \}$. More precisely, it follows from \eqref{eq3.8} and \eqref{eq3.8b} that $\varepsilon_{jl} = \varepsilon(\Omega_{\hatj} , \partial_{l'} \Omega_{\hat{l}})$ where the hat denotes the corresponding root and $l'$ is the appropriate index in the set defined in \eqref{eq3.8}.
\end{corollary}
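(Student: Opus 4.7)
The strategy is to read off the formula directly in the root case and then use the propagation rule \eqref{eq3.8b} together with the continuity built into $C\mathfrak{L}^k$ to reduce the branch case to the root case.

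If $j=i\in I$ is a root, then \eqref{eq3.8} already exhibits $\iota_j(\Bbbd\mathfrak{a})$ as a signed local sum. I would take $\gamma_j^{-1}$ to be exactly the index set appearing in \eqref{eq3.8}: each $l$ belongs to a unique tree $\mathfrak{S}_p$ with $p\in I^{d_j+1}$, so $\hat{l}=p$, and setting $l'=l$ gives $\varepsilon_{jl}=\varepsilon(\Omega_j,\partial_l\Omega_p)=\varepsilon(\Omega_{\hat{j}},\partial_{l'}\Omega_{\hat{l}})$ by inspection. Minimality is automatic since the set is the support of the sum in \eqref{eq3.8}.

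If $j\in I_i$ is a branch, I would substitute the root formula \eqref{eq3.8} into \eqref{eq3.8b} and then eliminate the resulting trace operators. Given $l'\in\gamma_i^{-1}$ with $p=\hat{l'}\in I^{d_i+1}$, the conformity of $\mathfrak{S}_p$ ensures that the subtree rooted at $l'$ contains a unique descendant $l\in I_{l'}$ with $s_l=s_j$, representing the same boundary face $\Omega_{s_j}$ as $j$ does within $\mathfrak{S}_i$. Applying the continuity condition of \cref{def:cont} to $a_l$ and $a_{l'}$ and invoking the telescoping of coordinate maps from \cref{def:tree} within $\mathfrak{S}_p$, one rewrites $\Tr_{\partial_j\Omega_i} a_{l'}$ as a signed copy of $a_l$. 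After substitution and cancellation of a squared orientation factor, the result is a local sum over such $l$, and I would define $\gamma_j^{-1}$ as the image of $\gamma_i^{-1}$ under the assignment $l'\mapsto l$. Minimality is inherited, since the assignment is injective by the conforming tree structure.

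The main technical obstacle is the sign bookkeeping. After the trace elimination, three orientation signs multiply, and collapsing them to the single claimed factor $\varepsilon(\Omega_{\hat{j}},\partial_{l'}\Omega_{\hat{l}})$ requires the chain-rule-type identity
\begin{equation*}
\varepsilon(\Omega_{s_j},\partial_l\Omega_p) \;=\; \varepsilon(\Omega_{s_j},\partial_j\Omega_i)\,\varepsilon(\Omega_i,\partial_{l'}\Omega_p),
\end{equation*}
expressing that the global orientation of a twice-iterated boundary equals the product of the two successive induced-orientation comparisons. This identity is geometrically natural under the standard Stokes-compatible convention on induced boundary orientations, and must be verified from the telescoping $\phi_{p,l}=\phi_{p,l'}\circ\phi_{l',l}$ by comparing the two chains of induced orientations within $\mathfrak{S}_p$.
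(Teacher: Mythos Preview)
The paper states this corollary without proof, treating it as an immediate consequence of \eqref{eq3.8}, \eqref{eq3.8b}, and the continuity built into $C\mathfrak{L}^k$; your argument is precisely the natural unpacking of that claim and is correct. In particular, your identification of the orientation chain rule as the only nontrivial point is accurate, and the paper implicitly absorbs it into the phrase ``it follows from \eqref{eq3.8} and \eqref{eq3.8b}''.
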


\begin{example}
	For all roots $i \in I$, we have $\gamma_i^{-1} = \{l \in \mathfrak{F}| s_l = i \text{ and } \hat{l} \in I^{d_i + 1} \}$ as in \eqref{eq3.8}, i.e., the set consists of all branches which geometrically coincide with $\Omega_i$ and have a root of dimension $d_i + 1$. In the example from \Cref{fig: slit}, $\gamma_3^{-1}$ therefore consists of the indices $j_3, j_4 \in I_4$ which have $s_{j_3} = s_{j_4} = 3$.
\end{example}


\begin{example}
	The set $\gamma_j^{-1}$ is void for branches $j \in I_i$ for which $\iota_j (\Bbbd \mathfrak{a}) = 0$ for all $\mathfrak{a} \in C \mathfrak{L}^k$. An example of this arises in \Cref{fig: slit} where all $\mathfrak{a} \in C \mathfrak{L}^0$ map to zero at the extremities of $\Omega_3$, i.e. on the branches $j \in I_3$ with $d_j = 0$.
\end{example}

The strong form of the mixed-dimensional exterior derivative is thus interpreted as the fixed-dimensional exterior derivative within each domain $X_i$, where the out-of-manifold components of the differential are expressed in terms of the traces of values on the manifolds which are in the neighborhood of $\Omega_i$. This definition is consistent with standard models for materials with thin inclusions \cite{martin2005modeling,3,13}. Note that it is clear that the differential operator preserves continuity.

In absence of Dirichlet boundaries, i.e. $Y_D= \emptyset$, the mixed-dimensional spaces $C \mathfrak{L}^k$ as well as their relations given through the mixed-dimensional exterior derivative given in \Cref{def:strongdifferential}, are summarized in the
following diagram
\begin{equation}
	\begin{tikzcd}
		C \mathfrak{L}^0 \arrow[d,"\mathfrak{d}"] 
		& \tx{C^\infty\Lambda^0 (\overline{X}^n)} \arrow[d,"\myd"]\arrow[rd,"\Bbbd"]\\
		C \mathfrak{L}^1 \arrow[d,"\mathfrak{d}"] 
		& \tx{C^\infty\Lambda^1 (\overline{X}^n)} \arrow[d,"\myd"]\arrow[rd,"\Bbbd"] 
		& \tx{C^\infty\Lambda^0 (\overline{X}^{n-1})} \arrow[d,"\myd"]\arrow[rd,"\Bbbd"]\\
		C \mathfrak{L}^2 \arrow[d,dashrightarrow,"\mathfrak{d}"] 
		& \tx{C^\infty\Lambda^2 (\overline{X}^n)} \arrow[d,dashrightarrow,"\myd"]\arrow[rd, dashrightarrow,"\Bbbd"] 
		& \tx{C^\infty\Lambda^1 (\overline{X}^{n-1})} \arrow[d,dashrightarrow,"\myd"]\arrow[rd,dashrightarrow,"\Bbbd"] 
		& \tx{C^\infty\Lambda^0 (\overline{X}^{n-2})} \arrow[d,dashrightarrow,"\myd"]\arrow[rd,dashrightarrow,"\Bbbd"]\\
		C \mathfrak{L}^n 
		& \tx{C^\infty\Lambda^n (\overline{X}^n)}
		& \tx{C^\infty\Lambda^{n-1} (\overline{X}^{n-1})} 
		& \tx{C^\infty\Lambda^{n-2} (\overline{X}^{n-2})}
		& \tx{C^\infty\Lambda^0 (\overline{X}^0)}
	\end{tikzcd}
	\label{eq3.14}
\end{equation}
In order to provide this diagram, we have used the notation $X^d = \bigcup_{i\in I^d} X_i$. This diagram can be seen together with the forest in \Cref{fig: slit}.

\begin{lemma}
	The mixed-dimensional exterior derivative gives $C \mathfrak{L}^k(\mathfrak{F})$ the structure of a cochain complex, i.e. $\mathfrak{dda} = 0$ for all $\mathfrak{a} \in C \mathfrak{L}^k(\mathfrak{F})$.
\end{lemma}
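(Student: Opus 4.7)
The plan is to decompose $\mathfrak{d} = \myd + \Bbbd$, where $\myd$ denotes the componentwise fixed-dimensional exterior derivative, and to expand
\begin{equation*}
\mathfrak{d}\mathfrak{d} = \myd\myd + (\myd\Bbbd + \Bbbd\myd) + \Bbbd\Bbbd.
\end{equation*}
It suffices to show that each of the three pieces annihilates every $\mathfrak{a}\in C \mathfrak{L}^k$, node by node. Furthermore, since $\mathfrak{d}$ preserves continuity as noted before the lemma, $\mathfrak{d}\mathfrak{d}\mathfrak{a}\in C \mathfrak{L}^{k+2}$, and the continuity relation of \cref{def:cont} forces $\iota_j(\mathfrak{d}\mathfrak{d}\mathfrak{a}) = \varepsilon(\Omega_{s_j},\partial_j\Omega_i)\Tr_{\partial_j\Omega_i}\iota_i(\mathfrak{d}\mathfrak{d}\mathfrak{a})$ whenever $j\in I_i$. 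The verification therefore only has to be carried out at a root $i\in I$.

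The first piece is the classical de Rham identity $\myd\myd a_i = 0$ on $\Omega_i$. For the anticommutation $\myd\Bbbd + \Bbbd\myd = 0$ at a root $i$, I would insert \eqref{eq3.8}: the prefactor $(-1)^{n-k}$ flips to $(-1)^{n-(k+1)} = -(-1)^{n-k}$ when $\Bbbd$ is reapplied to $\myd\mathfrak{a}\in C \mathfrak{L}^{k+1}$, so $\iota_i\,\myd\Bbbd\mathfrak{a} = (-1)^{n-k}\sum_{l\in\gamma_i^{-1}} \varepsilon_{il}\,\myd a_l$ and $\iota_i\,\Bbbd\myd\mathfrak{a} = -(-1)^{n-k}\sum_{l\in\gamma_i^{-1}} \varepsilon_{il}\,\myd a_l$ cancel exactly.

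The main obstacle is $\Bbbd\Bbbd = 0$, which is trivial unless $d_i \le n-2$. Unfolding \eqref{eq3.8} and \eqref{eq3.8b} twice at such a root, using $\varepsilon_{il}^2=1$ and combining the two prefactors into $(-1)^{n-k}(-1)^{n-k-1} = -1$, reduces the claim to
\begin{equation*}
\sum_{l,\,m} \varepsilon_{\hat l, m}\,\Tr_{\partial_l\Omega_{\hat l}}\,a_m \;=\; 0,
\end{equation*}
where $l$ ranges over branches with root $\hat l\in I^{d_i+1}$ and $s_l=i$, while $m$ ranges over branches with root $\hat m\in I^{d_i+2}$ and $s_m=\hat l$. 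The vanishing is a \emph{boundary of a boundary} statement: for each $\hat m\in I^{d_i+2}$ whose subtree $\mathfrak{S}_{\hat m}$ contains a depth-two descendant with image $\Omega_i$, the telescoping property of \cref{def:tree} represents that descendant as a composition $\phi_{\hat m, m}\circ\phi_{m,\cdot}$ through exactly two distinct codimension-one intermediaries, and the orientation convention defining $\varepsilon$ assigns opposite signs to the two compositions. The corresponding traces are identified as equal forms on $\Omega_i$ by the continuity of $\mathfrak{a}\in C \mathfrak{L}^k$, so the contributions cancel pairwise.

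Combining the three cancellations at every root and propagating to branches via the continuity argument of the first paragraph yields $\mathfrak{d}\mathfrak{d}\mathfrak{a}=0$ throughout $\mathfrak{F}$.
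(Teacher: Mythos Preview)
Your proof is correct and follows essentially the same approach as the paper: decompose $\mathfrak{d}^2$ into $\myd^2 + (\myd\Bbbd + \Bbbd\myd) + \Bbbd^2$, kill the cross terms via the sign flip $(-1)^{n-k}\mapsto(-1)^{n-(k+1)}$, and reduce $\Bbbd^2=0$ to a pairwise orientation cancellation $\varepsilon_{jl_1}\varepsilon_{l_1 l'} = -\varepsilon_{jl_2}\varepsilon_{l_2 l'}$ (your ``boundary of a boundary'' statement). The only cosmetic difference is that you first reduce to roots via the continuity of $C\mathfrak{L}^{k+2}$, whereas the paper works directly at an arbitrary node $j\in\mathfrak{F}$ using the local formula $\iota_j(\Bbbd\mathfrak{a}) = (-1)^{n-k}\sum_{l\in\gamma_j^{-1}}\varepsilon_{jl}a_l$ from the corollary.
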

\begin{proof}
	By an explicit calculation, we have for arbitrary $j \in \mathfrak{F}$:
	\begin{align}
		\iota_j (\mathfrak{d}^2\mathfrak{a}) 
		= \myd^2 a_j + \iota_j (\myd \Bbbd \mathfrak{a}) + \iota_j (\Bbbd \myd \mathfrak{a}) +  \iota_j (\Bbbd^2 \mathfrak{a}).
	\end{align}
	The first term is zero by the properties of the fixed-dimensional exterior derivative $\myd$. Additionally, the exterior derivative and the jump operator $\Bbbd$ are anticommutative:
	\begin{equation} \label{eq: anticommutative}
		\iota_j (\myd \Bbbd \mathfrak{a}) 
		= \myd \left( (-1)^{n - k} \sum_{l \in \gamma_j^{-1}} \varepsilon_{jl} a_l \right)
		= - (-1)^{n - (k + 1)} \sum_{l \in \gamma_j^{-1}} \varepsilon_{jl} \iota_l (\myd \mathfrak{a})
		= - \iota_j (\Bbbd \myd \mathfrak{a}),
	\end{equation}
	hence the second and third terms cancel. 
	
	Finally, the last term becomes
	\begin{align}
		\iota_j (\Bbbd (\Bbbd \mathfrak{a}))
		&= 
		(-1)^{n - k} \sum_{l \in \gamma_j^{-1}} \varepsilon_{jl} \iota_{l} (\Bbbd \mathfrak{a})
		= - \sum_{l \in \gamma_j^{-1}}
		\sum_{l' \in \gamma_{l}^{-1}} 
		\varepsilon_{jl} \varepsilon_{ll'} a_{l'}
		= 0.  \label{eqA.3}
	\end{align}
		The last equality holds, since from the geometry, we see that each $a_{l'}$ appears twice. The signs must be opposite depending on which
		intermediate manifold is used when taking boundary traces from $\Omega_{\hat{l'}}$, thus for $l_1, l_2 \in \gamma_j^{-1}$ with $l_1 \ne l_2$:
	\begin{equation}
		\varepsilon_{j l_1} \varepsilon_{l_1 l'} =
		- \varepsilon_{j l_2} \varepsilon_{l_2 l'}.
	\label{eqA.4}
	\end{equation}
\qed
\end{proof}

From the definitions and lemma above, the inclusion and mixed-dimensional exterior derivative $\mathfrak{d}$ lead to a de Rham complex
$(C \mathfrak{L}^{\bullet},\mathfrak{d})$:
\begin{equation}
	0 \rightarrow \mathbb{R}\xrightarrow{\subset}
	C \mathfrak{L}^0\xrightarrow{\mathfrak{d}}
	C \mathfrak{L}^1\xrightarrow{\mathfrak{d}}\cdots\xrightarrow{\mathfrak{d}} 
	C \mathfrak{L}^n \rightarrow 0.
	\label{eq: De Rham C}
\end{equation}

It will be of interest to have an explicit representation of a codifferential operator, which is consistent with an integration-by-parts formula with respect to the inner product \eqref{eq:inner product}. We therefore propose the following
\begin{definition}\label{def:strongcodifferential} 
	For $\mathfrak{a}\in C \mathfrak{L}^k(\mathfrak{F})$, let the strong form of the mixed-dimensional exterior coderivative $\mathfrak{d}^*\colon C \mathfrak{L}^k(\mathfrak{F}) \rightarrow L^2\mathfrak{L}^{k-1}(\mathfrak{F})$ be defined such that for all $\mathfrak{b}\in \mathring{C} \mathfrak{L}^{k-1}(\mathfrak{F})$
	\begin{equation} \label{int by parts}
		(\mathfrak{d}^* \mathfrak{a},  \mathfrak{b})_\mathfrak{F} = ( \mathfrak{a},  \mathfrak{d}\mathfrak{b})_\mathfrak{F}
	\end{equation}
\end{definition}

\begin{lemma}\label{lem: IBPcont}
	For $\mathfrak{a}\in C \mathfrak{L}^k(\mathfrak{F})$, the strong form of the mixed-dimensional exterior coderivative has the explicit representation
	for all $j \in \mathfrak{F}$ as 
	\begin{align}
	\iota_j (\mathfrak{d}^*\mathfrak{a}) = \myd^* a_j + \sum_{l\in J_j^{d_j+1}} \Tr_j^* a_l 
	+ (-1)^{n - k} \sum_{l \in \gamma_j}
	\varepsilon_{lj} a_l
	\end{align}
	with $\gamma_j = \left\{ l \in \mathfrak{F} | j \in \gamma_l^{-1} \right\}$ and $J_j^{d_j + 1} = \left\{ l \in \mathfrak{F} | j \in I_l^{d_l - 1} \right\}$.
\end{lemma}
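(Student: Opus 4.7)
The plan is to derive the formula from the defining duality in \cref{def:strongcodifferential}: test against arbitrary $\mathfrak{b}\in \mathring{C}\mathfrak{L}^{k-1}$, expand $(\mathfrak{a},\mathfrak{d}\mathfrak{b})_{\mathfrak{F}}$ via the inner product \eqref{eq:inner product} and \cref{def:strongdifferential}, and then read off the coefficient of each $b_j$. Writing $\mathfrak{d}\mathfrak{b} = \myd \mathfrak{b} + \Bbbd \mathfrak{b}$ produces two sums,
\[
(\mathfrak{a},\mathfrak{d}\mathfrak{b})_{\mathfrak{F}} = \sum_{j\in\mathfrak{F}} (a_j,\myd b_j)_{\Omega_{s_j}} + \sum_{j\in\mathfrak{F}} (a_j,\iota_j\Bbbd\mathfrak{b})_{\Omega_{s_j}},
\]
which I would handle separately, since the first is a collection of classical bulk-plus-boundary integrals while the second is a purely algebraic reindexing.

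For the continuous contribution I would apply classical Stokes \eqref{eq2.12} on each $\Omega_{s_j}$ to obtain $(a_j,\myd b_j)_{\Omega_{s_j}} = (\myd^* a_j,b_j)_{\Omega_{s_j}} + (\Tr^* a_j,\Tr b_j)_{\partial\Omega_{s_j}}$. The bulk term supplies the $\myd^* a_j$ summand directly. For the boundary, decompose $\partial\Omega_{s_j}$ via the tree $\mathfrak{S}_{s_j}$ into the Dirichlet piece $\partial_{s_j}Y$, on which $\Tr b_j$ vanishes because $\mathfrak{b}\in\mathring{C}\mathfrak{L}^{k-1}$, together with internal faces $\partial_m\Omega_{s_j}$. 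On each internal face, the $C\mathfrak{L}^{k-1}$-continuity condition (traced back through $j$'s root by the telescoping property of \cref{def:tree}) identifies $\Tr b_j$ with $\pm b_{m'}$ for a uniquely determined forest node $m'$ sharing the same geometric submanifold as $m$. Swapping the order of the resulting double sum so that the outer index labels the lower-dimensional manifold, the accumulated boundary contributions land precisely in the set $J_j^{d_j+1}$: by construction this is exactly the collection of forest nodes $l$ whose tree contains $j$ as a codimension-one descendant, i.e.\ whose submanifold $\Omega_{s_l}$ has a codimension-one face matching $\Omega_{s_j}$.

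For the discrete contribution I would substitute the explicit representation from the Corollary to obtain
\[
\sum_{j} (a_j,\iota_j\Bbbd\mathfrak{b})_{\Omega_{s_j}} = (-1)^{n-(k-1)} \sum_{j}\sum_{l\in\gamma_j^{-1}} \varepsilon_{jl}\,(a_j,b_l)_{\Omega_{s_j}},
\]
where the inner product is well-defined because $s_l = s_j$ on $\gamma_j^{-1}$. Swapping the two sums via the equivalence $l\in\gamma_j^{-1}\Leftrightarrow j\in\gamma_l$ and relabeling dummy indices converts the expression into $\sum_j \sum_{l\in\gamma_j} \varepsilon_{lj}\,(a_l,b_j)_{\Omega_{s_j}}$ times the overall sign. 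Collecting all three contributions per node and invoking the fact that $\mathring{C}\mathfrak{L}^{k-1}$ is dense in $L^2\mathfrak{L}^{k-1}$ (following \cref{thm: C-dense-L2}) then identifies $\iota_j\mathfrak{d}^*\mathfrak{a}$ uniquely as stated.

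The main obstacle I anticipate is careful bookkeeping: tracking the orientation factors $\varepsilon(\cdot,\cdot)$ through the summation swap so that the overall sign emerges as $(-1)^{n-k}$; verifying via tree telescoping that iterated boundary traces $\Tr_{\partial_m\Omega_{s_j}} b_j$ are correctly matched to forest components $b_{m'}$ when $j$ is not itself a root; and confirming that when multiple forest nodes share the same $s_j$, their separate tree contributions aggregate into a single expression at node $j$ without double-counting. The algebraic structure of the two index sets $J_j^{d_j+1}$ and $\gamma_j$, which respectively capture the continuous-boundary and discrete-jump transpositions, is what makes the combinatorial bookkeeping tractable.
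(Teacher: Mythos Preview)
Your approach is essentially identical to the paper's: expand $(\mathfrak{a},\mathfrak{d}\mathfrak{b})_{\mathfrak{F}}$ via \eqref{eq:inner product} and \cref{def:strongdifferential}, integrate by parts on the $\myd$ contribution and reindex the resulting boundary terms through $J_j^{d_j+1}$, swap the $\Bbbd$ sum via $l\in\gamma_j^{-1}\Leftrightarrow j\in\gamma_l$, and conclude uniqueness by density of the test space. The sign and orientation bookkeeping you flag as the main obstacle is indeed the only delicate point, and the paper's own proof is terse at exactly that step.
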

\begin{proof}
	By definition of the mixed-dimensional exterior derivative and inner product, we calculate for $\mathfrak{b} \in \mathring{C}\mathfrak{L}^{k - 1}$
	\begin{align}
		(\mathfrak{db},  \mathfrak{a})_\mathfrak{F} &=
		\sum_{j \in \mathfrak{F}} (\myd b_j + \iota_j (\Bbbd \mathfrak{b}), a_j)_{\Omega_{s_j}} 
		\nonumber \\
		&=
		\sum_{j \in \mathfrak{F}} (\myd b_j + (-1)^{n - k} \sum_{l\in\gamma_j^{-1}} \varepsilon_{jl}
		b_l, a_j)_{\Omega_{s_j}} 
		\nonumber \\
		&=
		\sum_{j \in \mathfrak{F}} (b_j,\myd^* a_j)_{\Omega_{s_j}}  
		+ \sum_{l\in I_j^{d_j-1}}(\Tr_l  b_j,\Tr_l^* a_j)_{\partial_l\Omega_{s_j}} 
		+ (-1)^{n - k} \sum_{l\in\gamma_j}
		(b_j, \varepsilon_{lj} a_l)_{\Omega_{s_j}} 
		\nonumber \\
		&=
		\sum_{j \in \mathfrak{F}} (b_j,\myd^* a_j)_{\Omega_{s_j}}  
		+ \sum_{l\in J_j^{d_j+1}}(b_j,\Tr_j^* a_l)_{\Omega_{s_j}} 
		+ (-1)^{n - k} \sum_{l\in\gamma_j} 
		(b_j, \varepsilon_{lj} a_l)_{\Omega_{s_j}} 
	\end{align}
	It follows that the strong form proposed in \Cref{lem: IBPcont} satisfies \Cref{def:strongcodifferential}. It remains to show uniqueness. Let $\mathfrak{c}\in L^2\mathfrak{L}^{k-1}(\mathfrak{F})$ be any other codifferential of $\mathfrak{a}$. Then by \Cref{def:strongcodifferential} 
	$$
	(\mathfrak{c}-\mathfrak{d}^* \mathfrak{a},  \mathfrak{b})_\mathfrak{F} = 0
	$$
	for all $\mathfrak{b} \in \mathring{C} \mathfrak{L}^{k-1}(\mathfrak{F})$. By the definition of the inner product, it then follows that $c_i - \iota_i (\mathfrak{d}^* \mathfrak{a})$ is zero almost everywhere on $\Omega_{s_i}$ for all $i \in \mathfrak{F}$, thus $\mathfrak{d}^* \mathfrak{a}$ is unique up to equivalence classes in $L^2\mathfrak{L}^{k-1}(\mathfrak{F})$. 
\qed
\end{proof}

\begin{remark}
	$\mathfrak{d}^* C \mathfrak{L}^k$ has higher regularity than $L^2 \mathfrak{L}^{k - 1}$, but we will not discuss this space further.
\end{remark}

The codifferential operator suggests the following definition of spaces of codifferentiable mixed-dimensional forms. 
\begin{definition}\label{def:cont2} 
	Let the space of codifferentiable mixed-dimensional $k$-forms over $\mathfrak{F}$ be denoted $C^* \mathfrak{L}^k(\mathfrak{F})$, defined such that 
	\begin{equation}
		C^* \mathfrak{L}^k(\mathfrak{F}) : \{\mathfrak{a} \in C \mathfrak{L}^k(\mathfrak{F}) \mid 
		\mathfrak{d^*a} \in C \mathfrak{L}^{k-1}(\mathfrak{F})
		 \}.
	\end{equation}
	We denote by $\mathring{C}^* \mathfrak{L}^k$ the subset of functions $\mathfrak{a}\in C^* \mathfrak{L}^k$ such that $\Tr^* a_i=0$ on $\partial_i Y$. 
\end{definition}

\begin{lemma}\label{lem: cont_codd0}
	For $\mathfrak{a} \in C^* \mathfrak{L}^k(\mathfrak{F})$, it holds that $\mathfrak{d^*d^*a}=0$, and thus $\mathfrak{d^*} \colon C^*\mathfrak{L}^k(\mathfrak{F}) \rightarrow C^*\mathfrak{L}^{k-1}(\mathfrak{F})$. 
\end{lemma}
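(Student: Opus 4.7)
The natural route is to exploit the duality definition of $\mathfrak{d}^*$ rather than attempting a direct calculation from the explicit formula in \cref{lem: IBPcont}. Since the cochain property $\mathfrak{d}\mathfrak{d}=0$ has already been established, a double application of the integration-by-parts identity \eqref{int by parts} should transfer this property to the codifferential.

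The plan is as follows. First, the hypothesis $\mathfrak{a} \in C^* \mathfrak{L}^k$ guarantees via \cref{def:cont2} that $\mathfrak{d}^*\mathfrak{a} \in C \mathfrak{L}^{k-1}$, so by \cref{def:strongcodifferential} the iterated codifferential $\mathfrak{d}^*(\mathfrak{d}^*\mathfrak{a})$ is a well-defined element of $L^2 \mathfrak{L}^{k-2}$. For any test form $\mathfrak{c} \in \mathring{C} \mathfrak{L}^{k-2}$, I would first check that $\mathfrak{d}\mathfrak{c} \in \mathring{C} \mathfrak{L}^{k-1}$: on the smooth component this follows from commutativity of trace and exterior derivative, $\Tr \myd c_i = \myd \Tr c_i = 0$ on $\partial_i Y$, while the jump contributions $\iota_i(\Bbbd \mathfrak{c})$ inherit the vanishing condition on $\partial_i Y$ from their constituent traces. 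This ensures $\mathfrak{d}\mathfrak{c}$ is an admissible test form in \eqref{int by parts}. Applying the integration-by-parts identity twice then gives
\begin{equation*}
    (\mathfrak{d}^*\mathfrak{d}^*\mathfrak{a}, \mathfrak{c})_\mathfrak{F}
    = (\mathfrak{d}^*\mathfrak{a}, \mathfrak{d}\mathfrak{c})_\mathfrak{F}
    = (\mathfrak{a}, \mathfrak{d}\mathfrak{d}\mathfrak{c})_\mathfrak{F} = 0,
\end{equation*}
where the last equality uses the cochain property proved in the earlier lemma.

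To conclude $\mathfrak{d}^*\mathfrak{d}^*\mathfrak{a} = 0$ as an element of $L^2 \mathfrak{L}^{k-2}$ I would invoke a density argument: the space $\mathring{C} \mathfrak{L}^{k-2}$ contains all smooth forms compactly supported away from $\partial_D Y$ within each $\Omega_{s_j}$, which are already dense in $L^2 \mathfrak{L}^{k-2}$, so vanishing of the pairing against this subspace forces the $L^2$ representative itself to vanish. The mapping statement $\mathfrak{d}^* \colon C^* \mathfrak{L}^k \to C^* \mathfrak{L}^{k-1}$ then comes for free, since the zero form lies trivially in $C \mathfrak{L}^{k-2}$, so $\mathfrak{d}^*\mathfrak{a}$ satisfies the defining condition of $C^* \mathfrak{L}^{k-1}$ in \cref{def:cont2}.

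The principal technical obstacle I anticipate is verifying $\mathfrak{d}(\mathring{C} \mathfrak{L}^{k-2}) \subset \mathring{C} \mathfrak{L}^{k-1}$, specifically for the discrete part $\Bbbd$. The summation indexed by $\gamma_i^{-1}$ combines values from several neighboring submanifolds, and one must carefully track how $\partial_i Y = \partial_D Y \cap \partial \Omega_{s_i}$ is (or is not) shared among them so that the vanishing trace condition really does propagate through the jump. If this verification proves awkward in full generality, a clean fallback is to restrict the test forms to those compactly supported in $Y_N$; such forms automatically lie in $\mathring{C} \mathfrak{L}^{k-2}$, their images under $\mathfrak{d}$ are supported away from $\partial_D Y$ by inspection, and they remain dense in $L^2 \mathfrak{L}^{k-2}$, which is all the density step actually needs.
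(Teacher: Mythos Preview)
Your proposal is correct and follows essentially the same route as the paper: apply \cref{def:strongcodifferential} twice and invoke $\mathfrak{d}\mathfrak{d}=0$, then observe that $0\in C\mathfrak{L}^{k-2}$ forces $\mathfrak{d}^*\mathfrak{a}\in C^*\mathfrak{L}^{k-1}$. The paper's proof is terse and leaves implicit the verification that $\mathfrak{d}(\mathring{C}\mathfrak{L}^{k-2})\subset\mathring{C}\mathfrak{L}^{k-1}$ and the density step, so the additional care you take on these points is a genuine improvement in rigor rather than a departure in strategy.
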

\begin{proof}
The statement that $\mathfrak{d^*d^*a}=0$ follows by applying \Cref{def:strongcodifferential} twice, and using that $\mathfrak{dd}=0$. The second statement follows since $0\in C \mathfrak{L}^{k-2}(\mathfrak{F})$, thus $\mathfrak{d^* a} \in C^* \mathfrak{L}^{k - 1}(\mathfrak{F})$.
\qed
\end{proof}

The codifferential operator and spaces also form a de Rham complex
$(C^*\mathfrak{L}^{\bullet},\mathfrak{d}^*)$:
\begin{equation}
	0 \leftarrow 
	C^*\mathfrak{L}^0\xleftarrow{\mathfrak{d}^*}
	C^*\mathfrak{L}^1\xleftarrow{\mathfrak{d}^*} 
	\cdots \xleftarrow{\mathfrak{d}^*} 
	C^*\mathfrak{L}^n \xleftarrow{\supset}\mathbb{R}
	\leftarrow 0.
	\label{eq: De Rham C*}
\end{equation}
The de Rham complexes with boundary conditions
$(\mathring{C}\mathfrak{L}^{\bullet},\mathfrak{d})$ and $(\mathring{C}^*\mathfrak{L}^{\bullet},\mathfrak{d}^*)$ 
are defined similarly, but with the integral instead of the inclusions, equivalent to \eqref{eq2.8} and \eqref{eq2.9}.

\begin{remark}\label{cont_characterization}
	The forms $\mathfrak{a} \in C^* \mathfrak{L}^k(\mathfrak{F})$ satisfy, in addition to the conditions of \Cref{def:cont}, also the analogous conditions related to the codifferential, i.e. that for all $i\in I$ and $j\in I_i$
	\begin{equation*}
		\iota_j (\mathfrak{d}^*\mathfrak{a}) = \varepsilon(\Omega_{s_j},\partial_{j} \Omega_i) \Tr_{\partial_{j} \Omega_i}
		\iota_i (\mathfrak{d}^*\mathfrak{a}). 
	\end{equation*} 
	Using \Cref{lem: IBPcont}, this provides an explicit constraint on the form $\mathfrak{a}$ in the sense that 
	\begin{equation*}
		\Tr_{\partial_{j} \Omega_i} \myd^* a_i = 
		\varepsilon(\Omega_{s_j},\partial_{j} \Omega_i) \left[
		\myd^* a_j + \sum_{l\in J_j^{d_j+1}} \Tr_j^*a_l + (-1)^{n - k} \sum_{l\in\gamma_j} \varepsilon_{lj} a_l\right].
	\end{equation*}
\end{remark}

\subsection{Weak differentials and Hilbert spaces}

In order to provide a suitable framework for working with partial differential equations, we are also interested in weak forms of the function spaces and operators introduced above. 

\begin{definition}\label{def:weakdifferential} 
	For $\mathfrak{a}\in L^2\mathfrak{L}^k(\mathfrak{F})$, with $a_j\in H\Lambda^{k_j}(\Omega_{s_j})$ for all $j \in \mathfrak{F}$, let a weak mixed-dimensional exterior derivative of $\mathfrak{a}$, if it exists, be any form $\mathfrak{da}\in L^2\mathfrak{L}^{k+1}(\mathfrak{F})$ such that for all $\mathfrak{b}\in \mathring{C}\mathfrak{L}^{k+1}(\mathfrak{F})$,
	\begin{equation}
		(\mathfrak{da},\mathfrak{b})_{\mathfrak{F}} = 
		(\mathfrak{a}, \mathfrak{d}^* \mathfrak{b})_{\mathfrak{F}}.
	\end{equation}
\end{definition}

\begin{definition}\label{def: HL} 
We denote the space of weakly differentiable mixed-dimensional forms on the forest $\mathfrak{F}$ as
\begin{equation}
	H\mathfrak{L}^k(\mathfrak{F}):\{\mathfrak{a} \in L^2\mathfrak{L}^k(\mathfrak{F}) \mid 
	\mathfrak{d}\mathfrak{a} \in L^2\mathfrak{L}^{k + 1}(\mathfrak{F}) \}.
	\label{eq3.11}
\end{equation}
We allow for boundary conditions on the external boundary $\partial_D Y$ in the sense of
\begin{equation}
	\mathring{H}\mathfrak{L}^k(\mathfrak{F}):\{\mathfrak{a }\in H\mathfrak{L}^k(\mathfrak{F}) \mid 
	\Tr_{\partial_i Y} a_i = 0  
	\text{ for all } i \in \mathfrak{F} \}.
	\label{eq3.12}
\end{equation}
\end{definition}
For both $H\mathfrak{L}^k$ and
$\mathring{H}\mathfrak{L}^k$ the natural norm is given as
\begin{equation}
\|\mathfrak{a}\|_{\mathfrak{d}} = \|\mathfrak{a}\| + \|\mathfrak{d}\mathfrak{a}\|.
\label{eq3.13}
\end{equation}

\begin{lemma}\label{lem: HL Hilbert} 
The mixed-dimensional spaces $H\mathfrak{L}^k(\mathfrak{F})$ and $\mathring{H}\mathfrak{L}^k(\mathfrak{F})$ are Hilbert spaces.
\end{lemma}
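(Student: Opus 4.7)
The plan is the standard completeness argument for graph-norm spaces, adapted to the mixed-dimensional setting. First I would observe that the norm \eqref{eq3.13} is induced by the inner product
\begin{equation*}
  (\mathfrak{a},\mathfrak{b})_{\mathfrak{d}} = (\mathfrak{a},\mathfrak{b})_\mathfrak{F} + (\mathfrak{d}\mathfrak{a},\mathfrak{d}\mathfrak{b})_\mathfrak{F},
\end{equation*}
which is symmetric, bilinear, and positive-definite by \cref{lem: inner product}. Hence $H\mathfrak{L}^k(\mathfrak{F})$ is pre-Hilbert, and only completeness requires work.

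For completeness, I would take a Cauchy sequence $\{\mathfrak{a}_n\} \subset H\mathfrak{L}^k(\mathfrak{F})$. By the definition of the norm, both $\{\mathfrak{a}_n\}$ and $\{\mathfrak{d}\mathfrak{a}_n\}$ are Cauchy in $L^2\mathfrak{L}^k(\mathfrak{F})$ and $L^2\mathfrak{L}^{k+1}(\mathfrak{F})$ respectively. By \cref{lem: L2L Hilbert}, these sequences converge to limits $\mathfrak{a} \in L^2\mathfrak{L}^k(\mathfrak{F})$ and $\mathfrak{c} \in L^2\mathfrak{L}^{k+1}(\mathfrak{F})$. It then remains to identify $\mathfrak{c}$ with the weak mixed-dimensional exterior derivative of $\mathfrak{a}$ in the sense of \cref{def:weakdifferential}. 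For any test form $\mathfrak{b} \in \mathring{C}\mathfrak{L}^{k+1}(\mathfrak{F})$, the defining identity for $\mathfrak{d}\mathfrak{a}_n$ gives
\begin{equation*}
  (\mathfrak{d}\mathfrak{a}_n,\mathfrak{b})_\mathfrak{F} = (\mathfrak{a}_n, \mathfrak{d}^*\mathfrak{b})_\mathfrak{F},
\end{equation*}
and passing to the limit using Cauchy--Schwarz on each side yields
$(\mathfrak{c},\mathfrak{b})_\mathfrak{F} = (\mathfrak{a}, \mathfrak{d}^*\mathfrak{b})_\mathfrak{F}$, so $\mathfrak{c} = \mathfrak{d}\mathfrak{a}$. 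Consequently $\mathfrak{a} \in H\mathfrak{L}^k(\mathfrak{F})$ and $\mathfrak{a}_n \to \mathfrak{a}$ in the graph norm.

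For $\mathring{H}\mathfrak{L}^k(\mathfrak{F})$, the same argument produces a limit $\mathfrak{a} \in H\mathfrak{L}^k(\mathfrak{F})$, and it remains only to show that the boundary condition $\Tr_{\partial_i Y} a_i = 0$ survives the limit for each $i$. On each individual reference domain $X_i$ this is the classical statement that $\mathring{H}\Lambda^{k_i}(\Omega_{s_i})$ is closed in $H\Lambda^{k_i}(\Omega_{s_i})$, which follows from the continuity of the trace operator onto $\partial_i Y$ together with the observation that $\|a_n^i - a^i\|_\myd \le \|\mathfrak{a}_n - \mathfrak{a}\|_\mathfrak{d} \to 0$. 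Assembling over the (finite) forest $\mathfrak{F}$, the kernel condition is preserved, so $\mathfrak{a} \in \mathring{H}\mathfrak{L}^k(\mathfrak{F})$.

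The only subtle point, and the step I expect to require the most care, is the identification $\mathfrak{c} = \mathfrak{d}\mathfrak{a}$: one must verify that the duality in \cref{def:weakdifferential} is valid for the approximating sequence $\mathfrak{a}_n$ itself, i.e. that the integration-by-parts identity $(\mathfrak{d}\mathfrak{a}_n,\mathfrak{b})_\mathfrak{F} = (\mathfrak{a}_n,\mathfrak{d}^*\mathfrak{b})_\mathfrak{F}$ holds for all $\mathfrak{a}_n \in H\mathfrak{L}^k(\mathfrak{F})$ and test forms $\mathfrak{b} \in \mathring{C}\mathfrak{L}^{k+1}(\mathfrak{F})$. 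This is essentially the defining property of weak differentiability in \cref{def:weakdifferential} applied to $\mathfrak{a}_n$, so no new machinery is required, but it is the place where the construction genuinely uses that $\mathring{C}\mathfrak{L}^{k+1}$ supplies enough test forms. Everything else is bookkeeping built on the Hilbert structure of $L^2\mathfrak{L}^k$.
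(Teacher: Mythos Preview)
Your argument is correct and follows the same route as the paper: take a Cauchy sequence, use completeness of $L^2\mathfrak{L}^k$ and $L^2\mathfrak{L}^{k+1}$ to obtain limits, and identify the second limit as the weak differential of the first by passing to the limit in the duality with $\mathring{C}\mathfrak{L}^{k+1}$ test forms. The paper's proof is literally this computation.

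Two small remarks. First, for $\mathring{H}\mathfrak{L}^k$ the paper takes a slightly different tack: rather than invoking continuity of the boundary trace on each $\Omega_i$, it simply enlarges the test space from $\mathring{C}\mathfrak{L}^{k+1}$ to $C\mathfrak{L}^{k+1}$ and reruns the same limit calculation, the point being that the identity $(\mathfrak{d}\mathfrak{a}_n,\mathfrak{c})_\mathfrak{F}=(\mathfrak{a}_n,\mathfrak{d}^*\mathfrak{c})_\mathfrak{F}$ already holds for this larger test space when $\mathfrak{a}_n$ has vanishing boundary trace, and that this characterizes $\mathring{H}\mathfrak{L}^k$. Your trace-continuity argument is a perfectly good alternative. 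Second, the inequality $\|a_n^i-a^i\|_{\myd}\le\|\mathfrak{a}_n-\mathfrak{a}\|_{\mathfrak{d}}$ is not quite literal: since $\iota_i(\mathfrak{d}\mathfrak{a})=\myd a_i+\iota_i(\Bbbd\mathfrak{a})$, you pick up a jump term bounded by a multiple of $\|\mathfrak{a}_n-\mathfrak{a}\|$, so the correct statement is $\|a_n^i-a^i\|_{\myd}\lesssim\|\mathfrak{a}_n-\mathfrak{a}\|_{\mathfrak{d}}$. This does not affect the conclusion.
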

\begin{proof}
The case $k=n$ is immediate from \Cref{lem: L2L Hilbert}, as $H\mathfrak{L}^n(\mathfrak{F})=L^2\mathfrak{L}^n(\mathfrak{F})$. For the general case, consider any Cauchy sequence $\mathfrak{a}_l\in H\mathfrak{L}^k(\mathfrak{F})$. Then due to the completeness of $L^2\mathfrak{L}^k(\mathfrak{F})$, the limits $\overline{\mathfrak{a}}= \lim_{l\rightarrow\infty}\mathfrak{a}_l$ and $\overline{\mathfrak{da}}= \lim_{l\rightarrow\infty}\mathfrak{da}_l$ exist. It remains to show that $\overline{\mathfrak{da}} = \mathfrak{d}\overline{\mathfrak{a}}$. This holds by a standard calculation, since for any $\mathfrak{c}\in \mathring{C} \mathfrak{L}^{k+1}(\mathfrak{F})$ 
\begin{equation}
	\begin{aligned}
	(\overline{\mathfrak{da}}, \mathfrak{c})_{\mathfrak{F}} &=
	\lim_{l\rightarrow\infty} (\mathfrak{da}_l, \mathfrak{c})_{\mathfrak{F}}  
	=
	\lim_{l\rightarrow\infty} (\mathfrak{a}_l, \mathfrak{d^*c})_{\mathfrak{F}}  
	=
	(\overline{\mathfrak{a}}, \mathfrak{d^*c})_{\mathfrak{F}}  
	=
	(\mathfrak{d}\overline{\mathfrak{a}}, \mathfrak{c})_{\mathfrak{F}}
	\end{aligned}
\end{equation}
The same calculation holds for $\mathring{H}\mathfrak{L}^k(\mathfrak{F})$ when $C \mathfrak{L}^{k+1}(\mathfrak{F})$ is used as the space of test functions. 
\qed
\end{proof}

In order to provide a characterization of $H\mathfrak{L}^k(\mathfrak{F})$ in terms of local spaces on each $\Omega_i$, we introduce a 
subspace of the standard space $H\Lambda^k$. 
Thus, for each root $i\in I$, we consider functions $a_i$ from the space of weakly differentiable $k_i$-forms subject to boundary conditions imposed by the forms $a_j$ for all $j\in I_i$. Locally, we refer to these spaces with imposed trace regularity using the recursive definition
\begin{equation}
	H \Lambda^k (\Omega_i, \Tr) = 
	\{a \in H \Lambda^k(\Omega_i)| \Tr_{\partial_j \Omega_i} a \in H \Lambda^k(\Omega_{s_j}, \Tr) \  \forall j \in I_i\}
\end{equation}
with a naturally induced, recursively defined, norm
\begin{equation} \label{eq: norm HL Tr}
	\| a \|_{H \Lambda^k (\Omega_i, \Tr)} = \| a \|_{H \Lambda^k(\Omega_i)} + \sum_{j \in I_i} \| \Tr_{\partial_j \Omega_{s_i}} a \|_{H \Lambda^k (\Omega_{s_j}, \Tr)}.
\end{equation}

\begin{example}
For $k = d_i$, the space $H \Lambda^k (\Omega_i, \Tr)$ corresponds to the classical space $L^2(\Omega_i)$. 
\end{example}

\begin{example}
For $d_i \geq 2$ and $k = d_i - 1$, the space $H \Lambda^k (\Omega_i, \Tr)$ corresponds to the subspace of the classical space $H(\nabla\cdot;\ \Omega_i)$ with the restriction that its traces are square integrable. It can thus be described by
\begin{equation}
	H \Lambda^{d_i - 1} (\Omega_i, \Tr) = 
	\{a_i \in H(\nabla\cdot;\ \Omega_i)| \Tr_{\partial_j \Omega_i} a_i \in L^2(\Omega_{s_j}) \ \forall j \in I_i^{d_i - 1}\}.
\end{equation}
This is in contrast to the full space $H(\nabla\cdot;\ \Omega_i)$, which contains function traces with only $H^{-1/2}$ regularity. Although there is no common notation for spaces with enhanced boundary regularity, they appear in applications, see e.g. \cite{13,martin2005modeling,24}.
\end{example}

\begin{lemma} \label{lem: extensions}
	There exist bounded extension operators $\mathcal{R} : H \Lambda^k(\partial X_i) \rightarrow H\Lambda^k(X_i)$.
\end{lemma}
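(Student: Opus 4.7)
The plan is to construct $\mathcal{R}$ explicitly as a right inverse of the trace map, using a collar neighborhood of $\partial X_i$ in the spirit of classical Sobolev extension arguments, adapted to differential forms. Since $\overline{X}_i$ is a compact manifold with boundary, a tubular neighborhood theorem supplies a diffeomorphism $\Psi \colon \partial X_i \times [0, \delta) \to \overline{U}$ onto a collar $U \subset X_i$ with $\Psi(\cdot, 0) = \mathrm{id}$. Let $\pi \colon U \to \partial X_i$ denote the induced projection, and let $t$ denote the normal coordinate.

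First I would fix a smooth cutoff $\chi \in C^\infty([0, \delta))$ with $\chi \equiv 1$ near $t = 0$ and $\chi \equiv 0$ near $t = \delta$. For $a \in H\Lambda^k(\partial X_i)$, define
\[
\mathcal{R} a = \chi(t)\, \pi^* a \quad \text{on } U,
\]
and extend by zero outside $U$; since $\chi(0) = 1$ and $\pi \circ \Psi(\cdot, 0) = \mathrm{id}$, we have $\Tr_{\partial X_i}\mathcal{R} a = a$. By Fubini on $\partial X_i \times [0, \delta)$ combined with the bounded Jacobian of $\Psi$,
\[
\|\mathcal{R} a\|_{L^2 \Lambda^k(X_i)} \lesssim \|a\|_{L^2 \Lambda^k(\partial X_i)}.
\]
For the exterior derivative, the naturality $\myd \pi^* = \pi^* \myd$ and the Leibniz rule give
\[
\myd \mathcal{R} a = \chi'(t)\, \myd t \wedge \pi^* a + \chi(t)\, \pi^* (\myd a),
\]
so that $\|\myd \mathcal{R} a\|_{L^2 \Lambda^{k+1}(X_i)} \lesssim \|a\|_{L^2 \Lambda^k(\partial X_i)} + \|\myd a\|_{L^2 \Lambda^{k+1}(\partial X_i)}$, which establishes boundedness in the $\|\cdot\|_{\myd}$ norm.

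The main obstacle is the regularity of $\partial X_i$: the geometries in \cref{sec2.1} permit slits and self-intersections, so $\partial X_i$ need not be a globally smooth hypersurface admitting a single collar. To handle this I would localize via a partition of unity subordinate to a cover in which $\partial X_i$ is flattened to a coordinate hyperplane (or to a union of such, near corners), apply the collar construction on each patch, and glue the resulting local extensions with a subordinate partition of unity on $X_i$. For Lipschitz boundaries one may instead invoke Stein's universal extension applied componentwise in local coordinates. Density of $C^\infty \Lambda^k(\partial X_i)$ in $H\Lambda^k(\partial X_i)$ allows one to prove the estimate first for smooth forms and then extend by closure.
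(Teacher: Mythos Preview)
Your construction is correct: the collar-and-cutoff extension $\mathcal{R}a=\chi(t)\,\pi^*a$ is the standard way to build a bounded right inverse to the tangential trace on $H\Lambda^k$, and your computation of $\myd\mathcal{R}a$ via naturality of pullback and the Leibniz rule gives exactly the required estimate.

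The paper, by contrast, does not argue at all here: the entire proof reads ``See e.g.\ [12]'' (a reference to the literature on traces and extensions of differential forms). So your approach is not so much \emph{different} as it is \emph{supplied where the paper omits}. What your explicit argument buys is self-containment and a concrete operator that can be reused constructively later (as indeed the paper does in \cref{thm: C-dense-L2} and \cref{tr-H}); what the paper's citation buys is brevity and an appeal to results that also cover lower-regularity settings.

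One remark on your ``main obstacle'': the slits and self-intersections you worry about live in the \emph{physical} manifolds $\Omega_i$, not in the reference domains $X_i$. The paper explicitly assumes that each $\overline{X}_i$ is a manifold with boundary, so $\partial X_i$ is smooth and a single global collar exists; your partition-of-unity localization, while harmless, is unnecessary under the stated hypotheses.
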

\begin{proof}
	See e.g. \cite{12}.
\qed
\end{proof}

\begin{lemma}\label{tr-H}
$H \Lambda^k (\Omega_i, \Tr)$ is a Hilbert space. 
\end{lemma}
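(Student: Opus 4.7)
The plan is to prove this by induction on the (finite) depth of the tree $\mathfrak{S}_i$, equivalently on $d_i$, since the recursive definition grounds out at nodes with $I_i = \emptyset$ (or at nodes so low-dimensional that the trace condition on $k$-forms becomes vacuous). First I would verify that the norm in \eqref{eq: norm HL Tr} is induced by the natural inner product
\begin{equation*}
	(a, b)_{H \Lambda^k (\Omega_i, \Tr)} = (a, b)_{H \Lambda^k(\Omega_i)} + \sum_{j \in I_i} (\Tr_{\partial_j \Omega_i} a, \Tr_{\partial_j \Omega_i} b)_{H \Lambda^k (\Omega_{s_j}, \Tr)},
\end{equation*}
which is well-defined once the inductive hypothesis supplies an inner product on each $H \Lambda^k(\Omega_{s_j}, \Tr)$. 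Bilinearity, symmetry, and positive-definiteness then follow from the corresponding properties at the preceding level together with the fact that $\|\cdot\|_{H\Lambda^k(\Omega_i)}$ is itself induced by an inner product.

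\textbf{Base case.} When the tree rooted at $i$ has depth zero, i.e.\ $I_i = \emptyset$, the space reduces to $H \Lambda^k(\Omega_i)$, which is a classical Hilbert space. Since the forest has finite depth, this always anchors the induction.

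\textbf{Inductive step.} Assume $H \Lambda^k(\Omega_{s_j}, \Tr)$ is Hilbert for every $j \in I_i$, and let $\{a^l\}$ be a Cauchy sequence in $H \Lambda^k(\Omega_i, \Tr)$. By definition of the norm, $\{a^l\}$ is Cauchy in $H\Lambda^k(\Omega_i)$, so has a limit $a \in H\Lambda^k(\Omega_i)$; and for each $j \in I_i$, the trace sequence $\{\Tr_{\partial_j \Omega_i} a^l\}$ is Cauchy in $H \Lambda^k(\Omega_{s_j}, \Tr)$, so by the inductive hypothesis has a limit $t_j \in H \Lambda^k(\Omega_{s_j}, \Tr)$. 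The remaining step is to identify $\Tr_{\partial_j \Omega_i} a = t_j$.

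\textbf{Compatibility of limits.} This is the main delicate point. The classical trace operator $\Tr_{\partial_j \Omega_i} : H\Lambda^k(\Omega_i) \to H^{-1/2}\Lambda^k(\partial_j \Omega_i)$ (or the analogous standard trace space) is bounded, so $a^l \to a$ in $H\Lambda^k(\Omega_i)$ implies $\Tr_{\partial_j \Omega_i} a^l \to \Tr_{\partial_j \Omega_i} a$ in this weaker topology. On the other hand, $\Tr_{\partial_j \Omega_i} a^l \to t_j$ in the stronger topology of $H\Lambda^k(\Omega_{s_j}, \Tr)$, which continuously embeds into the same weaker trace space. Uniqueness of limits in the weaker space then yields $\Tr_{\partial_j \Omega_i} a = t_j \in H\Lambda^k(\Omega_{s_j}, \Tr)$. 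Hence $a \in H\Lambda^k(\Omega_i, \Tr)$ with $a^l \to a$ in the full norm \eqref{eq: norm HL Tr}, completing the inductive step. The hardest part of the argument is precisely this identification of limits across two different norms on the boundary, which relies on standard continuity of the fixed-dimensional trace operator; everything else is bookkeeping on the finite tree.
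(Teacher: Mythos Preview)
Your argument is correct, but it takes a different route from the paper's. The paper proves completeness by exhibiting an explicit product structure: using the bounded extension operator of \cref{lem: extensions}, it identifies $H\Lambda^k(X_i,\Tr)$ with $\mathring{H}\Lambda^k(X_i) \times \prod_{j\in I_i^{d_i-1}} \mathcal{R}\circ\phi_{i,j}(H\Lambda^k(X_j))$ (and then recurses in $k$), so completeness follows from completeness of each factor. Your approach instead runs a direct Cauchy-sequence argument, inducting on tree depth and using continuity of the fixed-dimensional trace into a weaker boundary space to match the two candidate limits. This is more elementary in that it avoids the extension lemma entirely; on the other hand, the paper's product decomposition is not just a proof device but is reused verbatim in the subsequent density corollary, so their route pays off structurally. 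One small wrinkle: the norm \eqref{eq: norm HL Tr} is a \emph{sum} of norms and is therefore not literally induced by the inner product you write down---it is only equivalent to the square-root-of-sum-of-squares norm that is. You should phrase it that way, since ``Hilbert space'' requires an inner-product norm.
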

\begin{proof}
	Consider $i\in I$, $j \in I_i$, and let first $k=d_i-1$. Then $\phi_{i,j}(X_j)$ is a smooth $d_j$-dimensional subset of the boundary of $X_i$ for each $j\in I_i^{d_i-1}$, and $H\Lambda^{k}(X_j)= L^2\Lambda^{k}(X_j)$. Using the extension operator from \Cref{lem: extensions}, we obtain 
	\begin{equation} \label{prod struct Hlambda}
		H \Lambda^k (X_i, \Tr) = \mathring{H}\Lambda^k(X_i) \times \Pi_{j\in I_i^{d_i-1}} \mathcal{R}\circ\phi_{i,j} (H\Lambda^k(X_j)). 
	\end{equation}
	Due to the product structure, $H \Lambda^k (X_i, \Tr)$ is therefore complete.

	For $k<d_i -1$, the same argument is applied recursively. 
\qed
\end{proof}

Next, we observe that these local spaces allow for the decomposition of $H \mathfrak{L}^k$ into a product structure similar to that derived for $L^2 \mathfrak{L}^k$ in \Cref{def: L2}. In particular, we obtain the following, alternative characterization of $H \mathfrak{L}^k$:
\begin{equation}
	\begin{aligned}\label{alt-def-H}
		H \mathfrak{L}^k(\mathfrak{F}) : \{ & \mathfrak{a} \in \mathfrak{L}^k(\mathfrak{F}) \mid 
		a_i \in H \Lambda^{k_i}(\Omega_i, \Tr)\\
		&\mathrm{ and }\ 
		a_j = \varepsilon(\Omega_{s_j},\partial_{j} \Omega_i) \Tr_{\partial_{j} \Omega_i} a_i  \quad
		\forall\ i\in I\ \mathrm{ and }\ j\in I_i 
		 \}.
	\end{aligned}
\end{equation}

\begin{lemma}\label{H-char}
	\Cref{def: HL} and \eqref{alt-def-H} are equivalent. 
\end{lemma}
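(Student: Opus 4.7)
The plan is to prove both inclusions by direct identification. For one direction, given $\mathfrak{a}$ satisfying \eqref{alt-def-H}, I would construct the weak derivative explicitly from the strong-form formula and verify the weak identity of \cref{def:weakdifferential}. For the other direction, I would extract the local regularity and the trace compatibility of $\mathfrak{a}\in H\mathfrak{L}^k(\mathfrak{F})$ from the weak definition by testing against carefully localised elements of $\mathring{C}\mathfrak{L}^{k+1}(\mathfrak{F})$.

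For the first direction (\eqref{alt-def-H} implies \cref{def: HL}), I would define $\mathfrak{c}\in L^2\mathfrak{L}^{k+1}(\mathfrak{F})$ nodewise by $\iota_j \mathfrak{c} = \myd a_j + \iota_j(\Bbbd\mathfrak{a})$, with $\myd a_j$ interpreted in the weak fixed-dimensional sense on $\Omega_{s_j}$. The recursive trace regularity $a_i\in H\Lambda^{k_i}(\Omega_i,\Tr)$, combined with the compatibility $a_l=\varepsilon\,\Tr a_i$, ensures that both summands lie in $L^2\Lambda^{k_j+1}(\Omega_{s_j})$. To verify \cref{def:weakdifferential}, I would test against an arbitrary $\mathfrak{b}\in\mathring{C}\mathfrak{L}^{k+1}(\mathfrak{F})$ and run the calculation in the proof of \cref{lem: IBPcont} in reverse: fixed-dimensional integration by parts applied to each $(\myd a_j, b_j)_{\Omega_{s_j}}$ produces boundary pairings $(\Tr_l a_j,\Tr_l^* b_j)_{\partial_l\Omega_{s_j}}$ which, by trace compatibility, become $\varepsilon (a_l,\Tr_l^* b_j)_{\Omega_{s_l}}$. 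After the reindexing $l\in I_j^{d_j-1}\leftrightarrow j\in J_l^{d_l+1}$, these contribute exactly the $\Tr^*$-terms of $\iota_l(\mathfrak{d}^*\mathfrak{b})$, while $(\Bbbd\mathfrak{a},\mathfrak{b})_\mathfrak{F}$ absorbs the $\gamma$-summands, so $\mathfrak{c}=\mathfrak{d}\mathfrak{a}$.

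For the second direction (\cref{def: HL} implies \eqref{alt-def-H}), assume $\mathfrak{a}\in H\mathfrak{L}^k(\mathfrak{F})$. I would first establish the local regularity by testing with $\mathfrak{b}\in\mathring{C}\mathfrak{L}^{k+1}$ whose only nonzero component is some $b_i\in C_0^\infty\Lambda^{k_i+1}(\Omega_{s_i})$ compactly supported in the interior of $\Omega_{s_i}$; by \cref{lem: IBPcont} such a test function satisfies $\iota_i(\mathfrak{d}^*\mathfrak{b})=\myd^* b_i$ with all other components vanishing, so the weak identity forces $a_i\in H\Lambda^{k_i}(\Omega_{s_i})$. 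To extract the trace compatibility for $i\in I$ and $j\in I_i$, I would build a test function $\mathfrak{b}$ with $b_i$ having prescribed smooth trace on $\partial_j\Omega_i$ and vanishing on the other boundary pieces (using the extension operator of \cref{lem: extensions}), $b_j=\varepsilon(\Omega_{s_j},\partial_j\Omega_i)\,\Tr b_i$, and the remaining components localised near this interface to meet the continuity constraints of $\mathring{C}\mathfrak{L}^{k+1}$ while contributing only already-controlled terms. Substituting into the weak definition and applying \cref{lem: IBPcont}, the bulk and previously-handled boundary contributions cancel by the local regularity just obtained, leaving a surface pairing of the form $(\varepsilon\,\Tr_{\partial_j\Omega_i} a_i-a_j,\Tr_j^* b_i)_{\partial_j\Omega_i}$. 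Its vanishing over a dense family of admissible traces yields the claimed compatibility, after which the recursive $\Tr$-regularity follows by identifying $\Tr_{\partial_j\Omega_i} a_i$ with $\varepsilon a_j\in L^2\Lambda^{k_j}(\Omega_{s_j})$ and iterating down the tree.

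The hardest part will be the second direction. The test functions have to probe exactly one interface at a time while respecting the global continuity imposed on $\mathring{C}\mathfrak{L}^{k+1}$, and the recursive trace regularity must be teased out by a descent from the top-dimensional roots down to the leaves of each tree. Ensuring that the extensions do not introduce uncontrolled $\Bbbd$-contributions from neighbouring nodes is a delicate bookkeeping issue, but the construction in \cref{lem: extensions} together with a cut-off argument near $\partial_j\Omega_i$ should suffice.
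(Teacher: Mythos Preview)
Your proposal is correct and follows essentially the same route as the paper: for \eqref{alt-def-H}~$\Rightarrow$~\cref{def: HL} you construct $\mathfrak{d}\mathfrak{a}$ explicitly from the strong-form formula and verify the weak identity via the integration-by-parts computation of \cref{lem: IBPcont}, and for the converse you extract the trace-mismatch term $(a_j-\varepsilon\,\Tr_{\partial_j\Omega_i}a_i,\Tr_j^* b_l)$ from that same computation and argue it must vanish. One minor simplification: the local regularity $a_j\in H\Lambda^{k_j}(\Omega_{s_j})$ is already assumed in \cref{def:weakdifferential}, so your compactly-supported test in the second direction is redundant; the paper correspondingly handles that direction by a single global calculation rather than building localised test functions, but the substance is the same.
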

\begin{proof}
	Within this proof, we denote the space defined in \Cref{def: HL} as $H_1 \mathfrak{L}^k(\mathfrak{F})$ and the space defined in equation \eqref{alt-def-H} as $H_2 \mathfrak{L}^k(\mathfrak{F})$. 

	Suppose $\mathfrak{a}\in H_1\mathfrak{L}^k(\mathfrak{F})$. 
	By the same calculation as in the proof of \Cref{lem: IBPcont}, we obtain that
	\begin{equation}
		\begin{aligned}
			(\mathfrak{a},\mathfrak{d^*b})_\mathfrak{F}  &=\\
			\sum_{i \in I} \sum_{j \in \mathfrak{S}_i} (\myd a_j + \iota_j (\Bbbd \mathfrak{a}), b_j)_{\Omega_{s_j}} 
			&+\sum_{l\in J_j^{d_j+1}}(a_j-\varepsilon(\Omega_{s_j},\partial_{j} \Omega_{s_i}) \Tr_{\partial_{j} \Omega_{s_i}} a_i, \Tr_j^* b_l)_{\Omega_{s_j}} 
		\end{aligned}
	\end{equation}
	for all $\mathfrak{b}\in \mathring{C}\mathfrak{L}^{k+1}(\mathfrak{F})$. Since the second term cannot be represented in $\mathfrak{L}^{k+1}(\mathfrak{F})$ (as it contains an inner product of forms in $\mathfrak{L}^k(\mathfrak{F})$), it must be zero for $\mathfrak{da}$ to exist. However, since $\mathfrak{b}$ is arbitrary and $C \Lambda^{k_i}(\Omega_j)$ is dense in $L^2\Lambda^{k_i}(\Omega_j)$, the continuity condition in equation \eqref{alt-def-H} is a necessary consequence of \Cref{def: HL}. Moreover, we note that as in the strong case, the weak mixed-dimensional differential can be expressed locally by the weak differentials as 
	\begin{equation}\label{weak-d-exp}
		\iota_j (\mathfrak{da}) = \myd a_j + \iota_j (\Bbbd \mathfrak{a}).
	\end{equation}
	Again, by the density of $C \Lambda^{k_i}(\Omega_j)$, it follows that the existence of $\iota_j \mathfrak{da}$ implies $a_j\in H \Lambda^{k_j}(\Omega_{s_j})$ for all $j\in \mathfrak{F}$, and therefore $H_1\mathfrak{L}^k(\mathfrak{F})\subseteq H_2\mathfrak{L}^k(\mathfrak{F})$.

	Conversely, suppose $\mathfrak{a}\in H_2\mathfrak{L}^k(\mathfrak{F})$. Then the right-hand side of equation \eqref{weak-d-exp} is well defined, and we can construct $\mathfrak{da}$. It remains to show that 
	\begin{align*}
		\|\mathfrak{da}\|_\mathfrak{d}<\infty
	\end{align*}
	However, this is a direct consequence of the definition of the norm and inner products, equations \eqref{eq:inner product} and \eqref{eq:L2norm}. Therefore also $H_2\mathfrak{L}^k(\mathfrak{F})\subseteq H_1\mathfrak{L}^k(\mathfrak{F})$, and the Lemma is proved.
\qed
\end{proof}

\begin{corollary}
	The space $C \mathfrak{L}^k$ is dense in $H\mathfrak{L}^k$ with respect to the norm \eqref{eq3.13}. 
\end{corollary}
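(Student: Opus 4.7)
The plan is to mimic the proof of \cref{thm: C-dense-L2}, now working in the graph norm $\|\cdot\|_{\mathfrak{d}}$ instead of the $L^2$ norm, and exploiting the product characterization \eqref{alt-def-H} of $H\mathfrak{L}^k$ (together with the local product structure \eqref{prod struct Hlambda}) established in \cref{H-char,tr-H}. Given $\mathfrak{a}\in H\mathfrak{L}^k$ and $\epsilon>0$, I aim to construct $\mathfrak{b}\in C \mathfrak{L}^k$ with $\|\mathfrak{a}-\mathfrak{b}\|_{\mathfrak{d}}<\epsilon$, assigning a budget $\epsilon'=\epsilon/(C|\mathfrak{F}|)$ per node, with $C$ absorbing the norms of the extension operators invoked below.

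I would proceed recursively over $\mathfrak{F}$, from nodes of maximal depth upward. At a leaf $j$ the set $I_j$ is empty, so $a_j\in H\Lambda^{k_j}(\Omega_{s_j})$, and the classical density of $C^\infty\Lambda^{k_j}$ in $H\Lambda^{k_j}$ on a fixed-dimensional domain provides a smooth $b_j$ within $\epsilon'$ of $a_j$ in the $H$-norm. For an internal node $j$ whose children have already been processed, the previously constructed smooth forms $(b_{j'})_{j'\in I_j}$ supply smooth boundary data on $\partial X_{s_j}$ (compatible via the signs $\varepsilon(\Omega_{s_{j'}},\partial_{j'}\Omega_{s_j})$). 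Using a smoothness-preserving version of \cref{lem: extensions}—as already invoked in \cref{thm: C-dense-L2}—I lift these to a smooth extension $c_j$ on $X_{s_j}$ whose traces match the $b_{j'}$'s exactly, and then approximate the residual $a_j-c_j$, whose non-Dirichlet trace vanishes, by a smooth form $b'_j$ using density of $C^\infty$ in the $\mathring{H}\Lambda^{k_j}$-factor of the product decomposition \eqref{prod struct Hlambda}. Setting $b_j=c_j+b'_j$ yields a smooth form with the exact boundary matching required by \cref{def:cont}.

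Assembling $\mathfrak{b}$ in this way gives an element of $C \mathfrak{L}^k$, and the graph-norm error is controlled node-by-node via the explicit local representation \eqref{weak-d-exp} of the weak mixed-dimensional differential: for each $j$,
$$
\iota_j\mathfrak{d}(\mathfrak{a}-\mathfrak{b})=\myd(a_j-b_j)+(-1)^{n-k}\sum_{l\in\gamma_j^{-1}}\varepsilon_{jl}(a_l-b_l),
$$
which is bounded in $L^2$ by finitely many local $H$-norm errors already controlled by $\epsilon'$. The key difficulty—absent in the $L^2$ case—is to propagate the budget $\epsilon'$ through the recursion while enforcing \emph{exact} boundary matching: boundedness of the extension operators ensures that small boundary perturbations produce only a proportionally small perturbation of the extended form in the $H$-norm, so the constant $C$ must dominate the product of extension norms along root-to-leaf paths, which is finite because the forest is finite.
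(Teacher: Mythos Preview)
Your approach is essentially the paper's: both arguments reduce the density question to the product structure of \cref{H-char} and equation \eqref{prod struct Hlambda}, and then invoke the fixed-dimensional density of $C^\infty$ in $\mathring{H}\Lambda^{k_j}(X_j)$ on each factor. The paper simply states this abstractly in two sentences, whereas you spell out the recursive construction in the style of \cref{thm: C-dense-L2}.

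There is, however, a slip in your recursive step. You assert that the residual $a_j-c_j$ has vanishing (non-Dirichlet) trace, but this is false: by the continuity constraint in \eqref{alt-def-H} one has $\Tr_{\partial_{j'}\Omega_{s_j}} a_j = \pm a_{j'}$, while by construction $\Tr_{\partial_{j'}\Omega_{s_j}} c_j = \pm b_{j'}$, so
\[
\Tr_{\partial_{j'}\Omega_{s_j}}(a_j-c_j)=\pm(a_{j'}-b_{j'}),
\]
which is \emph{small} by the inductive hypothesis, but not zero. Consequently $a_j-c_j\notin\mathring{H}\Lambda^{k_j}$, and you cannot directly appeal to density of $C_0^\infty$ in $\mathring{H}\Lambda^{k_j}$ to approximate it. The fix is exactly the one encoded in \eqref{prod struct Hlambda}: decompose $a_j-c_j = r_j + \mathcal{R}\bigl(\Tr(a_j-c_j)\bigr)$ with $r_j\in\mathring{H}\Lambda^{k_j}(X_{s_j})$, approximate $r_j$ by a smooth $b_j'\in C_0^\infty$, and set $b_j=c_j+b_j'$. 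The leftover term $\mathcal{R}(a_{j'}-b_{j'})$ is controlled by the bounded extension operator and the inductive error, which is precisely the mechanism you already describe in your final paragraph; it just needs to be invoked here rather than only at the bookkeeping stage.
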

\begin{proof}
	\Cref{H-char} shows that the space $H\mathfrak{L}^k$ is isomorphic to the product space $\Pi_{i\in I} H \Lambda^{k_i} (\Omega_{i}, \Tr)$. 
	Furthermore, equation \eqref{prod struct Hlambda} in \Cref{tr-H} shows that each local space $H \Lambda^k (\Omega_j, \Tr)$ also enjoys a product structure. Each of the factors in this product structure is isomorphic to $\mathring{H}\Lambda^{k_j}(X_j)$, for some $j$. Since $C^\infty \Lambda^{k_j}(X_j)$ is dense in these spaces, the corollary follows.
\qed
\end{proof}

A similar construction leads us to the weak codifferential operators and corresponding spaces. 

\begin{definition}\label{def:weakcodifferential} 
	For $\mathfrak{a}\in L^2\mathfrak{L}^k(\mathfrak{F})$, with $a_i \in H^*\Lambda^{k_i}(\Omega_{s_i})$ for all $i \in \mathfrak{F}$, let a weak mixed-dimensional exterior coderivative of $\mathfrak{a}$, if it exists, be any form $\mathfrak{d^*a}\in L^2\mathfrak{L}^{k-1}(\mathfrak{F})$ such that for all $\mathfrak{b}\in \mathring{H}\mathfrak{L}^{k-1}(\mathfrak{F})$,
	\begin{align} \label{int by parts weak}
		(\mathfrak{d^*a},\mathfrak{b})_{\mathfrak{F}} = 
		(\mathfrak{a}, \mathfrak{d b})_{\mathfrak{F}}.
	\end{align}
\end{definition}

\begin{definition}\label{def: H*L} 
We denote the space of weakly codifferentiable mixed-dimensional forms on the forest $\mathfrak{F}$ as
\begin{equation}
H^*\mathfrak{L}^k:\{\mathfrak{a} \in L^2\mathfrak{L}^k \mid 
\mathfrak{d}^*\mathfrak{a} \in L^2 \mathfrak{L}^{k - 1} 
\}.
\label{eq3.11-weak}
\end{equation}
We allow for boundary conditions on the external boundary $\partial_D Y$ in the sense of
\begin{equation}
\mathring{H}^*\mathfrak{L}^k:\{\mathfrak{a }\in H\mathfrak{L}^k \mid 
\Tr_{\partial_i Y}^* a_i = 0  
\text{ for all } i \in \mathfrak{F}\}.
\label{eq3.12-weak}
\end{equation}
\end{definition}
For both $H^*\mathfrak{L}^k$ and
$\mathring{H}^*\mathfrak{L}^k$ the natural norm is given as
\begin{equation}
\|\mathfrak{a}\|_{\mathfrak{d}^*} = \|\mathfrak{a}\| + \|\mathfrak{d}^*\mathfrak{a}\|.
\label{eq3.13-weak}
\end{equation}

The same considerations as elaborated above for $H\mathfrak{L}^k(\mathfrak{F})$ can be extended to $H^*\mathfrak{L}^k(\mathfrak{F})$, and we summarize these without proof: 
\begin{corollary}\label{lem: HL Hilbert2} 
The mixed-dimensional spaces $H^*\mathfrak{L}^k(\mathfrak{F})$ and $\mathring{H}^*\mathfrak{L}^k(\mathfrak{F})$ are Hilbert spaces, and the space $C^* \mathfrak{L}^k(\mathfrak{F})$ is dense in $H^*\mathfrak{L}^k(\mathfrak{F})$ with respect to the norm \eqref{eq3.13-weak}. 
\end{corollary}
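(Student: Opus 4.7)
The plan is to mirror the arguments already carried out for $H\mathfrak{L}^k(\mathfrak{F})$, exploiting the symmetry between $\mathfrak{d}$ and $\mathfrak{d}^*$ provided by the integration-by-parts formula \eqref{int by parts weak} in \cref{def:weakcodifferential}. In particular, swapping the roles of the differential and codifferential, together with the corresponding swap of trace and cotrace, lets us recycle the structure of \cref{lem: HL Hilbert}, \cref{H-char}, and the density corollary.

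First I would show that $H^*\mathfrak{L}^k(\mathfrak{F})$ is complete. Given a Cauchy sequence $\mathfrak{a}_l$ in the norm \eqref{eq3.13-weak}, the completeness of $L^2\mathfrak{L}^k$ from \cref{lem: L2L Hilbert} gives limits $\overline{\mathfrak{a}}$ and $\overline{\mathfrak{d^*a}}$ in $L^2\mathfrak{L}^k$ and $L^2\mathfrak{L}^{k-1}$ respectively. For arbitrary $\mathfrak{c}\in \mathring{H}\mathfrak{L}^{k-1}(\mathfrak{F})$, passing to the limit in
\begin{equation*}
(\mathfrak{d^*a}_l, \mathfrak{c})_\mathfrak{F} = (\mathfrak{a}_l, \mathfrak{dc})_\mathfrak{F}
\end{equation*}
identifies $\overline{\mathfrak{d^*a}}$ as the weak codifferential of $\overline{\mathfrak{a}}$ in the sense of \cref{def:weakcodifferential}. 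The same argument with test functions in $H\mathfrak{L}^{k-1}(\mathfrak{F})$ (rather than $\mathring{H}\mathfrak{L}^{k-1}$) yields the result for $\mathring{H}^*\mathfrak{L}^k(\mathfrak{F})$.

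Next I would establish a product-structure characterization analogous to \cref{H-char}. Running the calculation of \cref{lem: IBPcont} with the roles of $\mathfrak{a}$ and $\mathfrak{b}$ reversed produces boundary terms involving $a_j - \varepsilon(\Omega_{s_j},\partial_j\Omega_{s_i}) \Tr^*_{\partial_j\Omega_{s_i}} a_i$; by density of smooth compactly supported forms, these must vanish whenever $\mathfrak{d^*a}$ exists, which is exactly the constraint isolated in \cref{cont_characterization}. This identifies $H^*\mathfrak{L}^k$ with the product over roots $i\in I$ of local spaces $H^*\Lambda^{k_i}(\Omega_i,\Tr^*)$, defined recursively in analogy with $H\Lambda^k(\Omega_i,\Tr)$ using the cotrace in place of the trace. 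The Hilbert property is then an alternative consequence of this product decomposition, exactly as in \cref{tr-H}.

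Finally, for the density claim, I would proceed as in the corollary after \cref{H-char}: each factor $H^*\Lambda^{k_i}(\Omega_i,\Tr^*)$ decomposes via an extension operator dual to the one in \cref{lem: extensions}, into a direct product of spaces isomorphic to $\mathring{H}^*\Lambda^{k_j}(X_j)$, in each of which $C^\infty\Lambda^{k_j}(X_j)$ is dense. Assembling smooth approximants factor by factor produces the desired $\mathfrak{b}\in C^*\mathfrak{L}^k$ approximating a given $\mathfrak{a}\in H^*\mathfrak{L}^k$. The main obstacle is the dual extension step: whereas extension in the $H\Lambda^k$ setting is classical, the existence of bounded cotrace-extension operators relies on the duality $\mystar_\partial \Tr^* = \Tr\mystar$ from \eqref{eq2.13} and the invertibility of the Hodge star, so one must verify that applying $\mystar$ converts the $H^*$-extension problem into the already-cited $H$-extension problem, and that the cotrace compatibility conditions carry through this transformation. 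Once this is in place, the three ingredients combine immediately to give the corollary.
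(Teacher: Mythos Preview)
Your proposal is correct and follows exactly the approach the paper itself indicates: the paper does not give a proof of this corollary but simply states that ``the same considerations as elaborated above for $H\mathfrak{L}^k(\mathfrak{F})$ can be extended to $H^*\mathfrak{L}^k(\mathfrak{F})$, and we summarize these without proof.'' Your three-step mirror of \cref{lem: HL Hilbert}, \cref{H-char}, and the density corollary is precisely what is intended, and you correctly flag the one genuinely new ingredient, namely the cotrace extension, together with the right way to obtain it via the Hodge star and \eqref{eq2.13}.
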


\subsection{Stokes' Theorem and Poincar\'e Lemma}\label{sec3.4}

We verify that standard tools are available in the mixed-dimensional setting, starting with Stokes' theorem.
\begin{theorem}[Stokes']\label{thm: Stokes}
For any
$\mathfrak{a}\in H\mathfrak{L}^{n-1}(\mathfrak{F})$ it holds
that
\begin{equation}
\int_{\mathfrak{F}}\mathfrak{d}\mathfrak{a} = 
\sum_{i \in I} \int_{\partial_i Y} \Tr_{\partial_i Y} a_i.
\label{eq3.17}
\end{equation}
\end{theorem}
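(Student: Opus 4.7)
The plan is to unwind the definition of integration over $\mathfrak{F}$ into a sum of classical integrals over the root domains, apply fixed-dimensional Stokes' theorem on each root, and verify that the resulting interior boundary terms cancel exactly with the contributions from the jump operator $\Bbbd$.

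First, since $\mathfrak{d}\mathfrak{a} \in \mathfrak{L}^n$, the local degree at every node $j$ in a tree rooted at $i$ equals $d_i$; by the dimensional convention $\Lambda^k(\Omega) = 0$ for $k > \dim\Omega$, only the root contributions survive, giving
\begin{equation*}
\int_{\mathfrak{F}} \mathfrak{d}\mathfrak{a} = \sum_{i\in I} \int_{\Omega_i} \iota_i(\mathfrak{d}\mathfrak{a}) = \sum_{i\in I} \int_{\Omega_i} \myd a_i + \sum_{i\in I} \int_{\Omega_i} \iota_i(\Bbbd \mathfrak{a}).
\end{equation*}

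On each root $\Omega_i$, classical Stokes' theorem is applicable (the alternative characterization in \cref{H-char} ensures that the traces on codimension-one boundary pieces are square integrable since they coincide, up to orientation, with the lower-dimensional components $a_j$), and only the codimension-one pieces $\partial_j\Omega_i$ for $j\in I_i^{d_i-1}$ contribute:
\begin{equation*}
\int_{\Omega_i}\myd a_i = \int_{\partial_i Y}\Tr a_i + \sum_{j \in I_i^{d_i-1}} \int_{\partial_j \Omega_i} \Tr_{\partial_j \Omega_i} a_i.
\end{equation*}
Using the continuity relation of \cref{H-char}, each internal boundary integral $\int_{\partial_j\Omega_i}\Tr_{\partial_j\Omega_i} a_i$ is rewritten in terms of $\int_{\Omega_{s_j}} a_j$, the orientation sign $\varepsilon(\Omega_{s_j},\partial_j\Omega_i)$ from the pointwise identity being absorbed by the sign that arises when the underlying reference manifold is reoriented from $\partial_j\Omega_i$ to $\Omega_{s_j}$. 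The resulting double sum, indexed by pairs $(i,j)$ with $j\in I_i^{d_i-1}$, is then reindexed through the substitution $i'=s_j$, $l=j$, producing exactly the index set appearing in \eqref{eq3.8} at the root $i'$, namely $l \in s_{i'}^{-1}$ with $l \in I_j$ for some $j \in I^{d_{i'}+1}$.

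A direct comparison of orientation factors shows that, together with the prefactor $(-1)^{n-k}=-1$ coming from $k=n-1$, the reindexed Stokes contributions cancel the $\Bbbd$ terms exactly, leaving $\sum_{i\in I}\int_{\partial_i Y}\Tr a_i$ as claimed. By the density corollary following \cref{H-char}, the identity then extends from smooth $\mathfrak{a}\in C\mathfrak{L}^{n-1}$ to all of $H\mathfrak{L}^{n-1}$, both sides being continuous with respect to $\|\cdot\|_{\mathfrak{d}}$. The main obstacle is the orientation bookkeeping in the cancellation step: verifying that the parent--child correspondence $(i,j) \leftrightarrow (i',l)$ is a genuine bijection across the entire forest, and that the compound orientation signs $\varepsilon(\Omega_{s_j},\partial_j\Omega_i)$ and $\varepsilon(\Omega_{i'},\partial_l\Omega_j)$ combine with $(-1)^{n-k}$ to produce the required cancellation uniformly.
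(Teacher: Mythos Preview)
Your proposal is correct and follows essentially the same route as the paper: restrict the forest integral of $\mathfrak{d}\mathfrak{a}$ to the roots, split into $\myd a_i$ and $\iota_i(\Bbbd\mathfrak{a})$, apply classical Stokes on each $\Omega_i$, and observe that the interior boundary terms cancel against the jump contributions (the paper compresses this into a single displayed chain of equalities). Your added density step is harmless but unnecessary---the paper works directly in $H\mathfrak{L}^{n-1}$ via the characterization in \cref{H-char}, which already guarantees the required trace regularity on interior faces.
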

\begin{proof}
Since $\mathfrak{da} \in H\mathfrak{L}^n$ is defined only on the roots, an explicit calculation gives
\begin{align}
\int_{\mathfrak{F}}\mathfrak{da} = 
\sum_{i \in I} \int_{\Omega_i} \myd a_i + \iota_i (\Bbbd \mathfrak{a}) 
= \sum_{i \in I} 
\int_{\partial \Omega_i} \Tr_{\partial \Omega_i} a_i
 - \int_{\Omega_i} \sum_{l \in \gamma_i^{-1}} \varepsilon_{il} a_l
= \sum_{i \in I} \int_{\partial_i Y}\Tr_{\partial_i Y} a_i. \label{eq3.18}
\end{align}
\qed
\end{proof}


Stokes' theorem allows us to show that the mixed-dimensional exterior
derivative is in a certain sense the correct generalization of the
exterior derivative on $Y$ to $\Omega$.
As indicated by diagram \eqref{eq3.14}, the integral and exterior derivative
$\mathfrak{d}$ defined in \Cref{sec3.1} lead to a de Rham complex
$(H\mathfrak{L}^{\bullet},\mathfrak{d})$:
\begin{equation}
	0 \rightarrow \mathbb{R}\xrightarrow{\subset}
	H\mathfrak{L}^0\xrightarrow{\mathfrak{d}}
	H\mathfrak{L}^1\xrightarrow{\mathfrak{d}}\cdots\xrightarrow{\mathfrak{d}} 
	H\mathfrak{L}^n \rightarrow 0.
	\label{eq: De Rham H}
\end{equation}

Moreover, with boundary conditions imposed on $\partial_D Y$, \Cref{thm: Stokes} gives us the mixed-dimensional analogue of \eqref{eq2.9}
\begin{equation}
	0 \xrightarrow{\subset}
	\mathring{H} \mathfrak{L}^0\xrightarrow{\mathfrak{d}}
	\mathring{H} \mathfrak{L}^1\xrightarrow{\mathfrak{d}}\cdots\xrightarrow{\mathfrak{d}} 
	\mathring{H} \mathfrak{L}^n \xrightarrow{\int_{\mathfrak{F}}} 
	\mathbb{R} \xrightarrow{} 0.
	\label{eq: De Rham H_0}
\end{equation}

Next, we consider the cohomology classes of \eqref{eq: De Rham H} in the following theorem.

\begin{theorem} \label{Thm: Cohomology H}
	For $n \le 3$, the dimensions of the cohomology spaces of the cochain complex $(H\mathfrak{L}^{\bullet}(\mathfrak{F}),\mathfrak{d})$ are equal to those of the complex $(H \Lambda^{\bullet}(Y),\myd)$.
\end{theorem}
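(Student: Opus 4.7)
The approach is to build an explicit chain map $\Phi \colon (H\Lambda^\bullet(Y),\myd) \to (H\mathfrak{L}^\bullet(\mathfrak{F}),\mathfrak{d})$ and to show, using the three supporting lemmas, that it is a quasi-isomorphism. Given a classical $k$-form $a \in H\Lambda^k(Y)$, I would define $\Phi(a)$ by setting its component at each top-dimensional root $i \in I^n$ to the restriction $a|_{\Omega_i}$, and filling in the lower-dimensional forest data by recursive trace in accordance with \cref{H-char}. The hypothesis $n \le 3$ is exactly what makes these traces rigorously defined: $H^{\pm 1/2}$ duality covers $k = 0$ and $k = n-1$, and the remaining case $n=3$, $k=1$ is \cite{buffa2002traces}, as already noted below \eqref{eq2.10}. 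Since a classical form has no jumps across internal submanifolds, the discrete contribution $\iota_j(\Bbbd \Phi(a))$ cancels by orientation across the sheets meeting at any $\Omega_{s_j}$, and hence $\mathfrak{d}\Phi(a) = \Phi(\myd a)$, so that $\Phi$ is a genuine chain map.

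To prove that $\Phi$ induces an isomorphism on cohomology, I would invoke the three supporting lemmas. I expect one of them to supply a Poincar\'e-type statement on a single reference domain $\overline{X}_i$, using that each is a manifold with boundary whose cohomology is computable (contractible or with known Betti numbers, as illustrated by the paper's examples). A second should exhibit a decomposition of $H\mathfrak{L}^\bullet$ into the image of $\Phi$ and a complementary ``jump subcomplex'' consisting of forms supported on codimension-one and deeper submanifolds. A third should show that this jump subcomplex is acyclic. Once in place, injectivity of the induced map reduces to showing that $\Phi(a) = \mathfrak{d}\mathfrak{b}$ implies $a = \myd b$ for some $b \in H\Lambda^{k-1}(Y)$, and surjectivity reduces to the statement that every closed $\mathfrak{a} \in H\mathfrak{L}^k$ is cohomologous to some $\Phi(a)$.

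The main obstacle I anticipate is establishing acyclicity of the jump subcomplex. Concretely, given a closed mixed-dimensional form $\mathfrak{a}$ whose components on codimension-one submanifolds encode nontrivial jumps, I would need to construct a primitive $\mathfrak{b}$ such that $\mathfrak{a} - \mathfrak{d}\mathfrak{b}$ carries trivial jump data. The cocycle condition $\mathfrak{d}\mathfrak{a} = 0$, together with \eqref{weak-d-exp}, forces the jump data to be closed as forms on the submanifolds, so the local Poincar\'e lemma on each $\overline{X}_j$ produces primitives $b_j$ on the submanifolds. Lifting these compatibly to the adjacent top-dimensional pieces via the extension operator of \cref{lem: extensions}, and recursing upward from the leaves of the forest toward the roots, should yield the required $\mathfrak{b}$. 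The restriction $n \le 3$ enters this recursion through the same trace-regularity requirement as in the definition of $\Phi$, which is why the theorem is stated only in low dimensions even though the result is expected to hold in general.
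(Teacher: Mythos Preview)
Your high-level strategy---reduce every closed $\mathfrak{a}\in H\mathfrak{L}^k$ to one supported only on $\Omega^n$ and then invoke de Rham on $Y$---is exactly the paper's. But your guesses about the three lemmas are off: they are not a Poincar\'e lemma on $\overline{X}_i$, a splitting into $\mathrm{im}\,\Phi$ plus a jump subcomplex, and an acyclicity statement. All three are the \emph{same} type of correction lemma, one for each degree $k=n,\,n-1,\,n-2$: given $\mathfrak{a}\in H\mathfrak{L}^k$ (closed when $k<n$), they produce $\mathfrak{b}\in H\mathfrak{L}^{k-1}$ with $\iota_i(\mathfrak{a}-\mathfrak{d}\mathfrak{b})=0$ on every tree of dimension $<n$. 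The mechanism is not a local Poincar\'e lemma on the submanifolds, but rather solving, at each node $i$, a finite linear system for $\iota_i(\Bbbd\mathfrak{c})=a_i$ and then lifting the solution via \cref{lem: extensions}; the closedness hypothesis $\mathfrak{d}\mathfrak{a}=0$ enters only as the compatibility condition that makes these systems solvable. Consequently the restriction $n\le 3$ is \emph{not} a trace-regularity issue tied to \eqref{eq2.10} as you suggest---it simply reflects that the lemmas are proved degree by degree, and only three degrees have been checked (cf.\ the remark following \cref{lem: co-complex}).

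There is also a genuine gap in your chain map. For general $a\in H\Lambda^k(Y)$, the restriction $a|_{\Omega_i}$ lies only in $H\Lambda^k(\Omega_i)$, whose boundary traces are $H^{-1/2}$-type objects; by \cref{H-char} membership in $H\mathfrak{L}^k(\mathfrak{F})$ requires the stronger $H\Lambda^{k_i}(\Omega_i,\Tr)$ regularity, i.e.\ $L^2$ traces. So $\Phi$ as you describe it does not land in $H\mathfrak{L}^k$. The paper sidesteps this entirely by never constructing a map in that direction: it works purely on the mixed-dimensional side, corrects down to forms supported on $\Omega^n$ with vanishing $\Bbbd$, and then observes that for such forms the internal boundaries are invisible to any de Rham integral.
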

\begin{proof}
	The proof follows the double-complex construction of \cite{6,Weil1952}. To verify all the steps in this construction in the case of mixed-dimensional differential forms, a detailed exposition is presented in \Cref{sec: proof of cohomology}.
\qed
\end{proof}

The de Rham complex with boundary conditions
$(\mathring{H}\mathfrak{L}^{\bullet},\mathfrak{d})$
is defined similar to \eqref{eq: De Rham H}.
\begin{corollary}\label{lem: Cohomology H0}
For spaces with boundary conditions
$\mathring{H}\mathfrak{L}^{\bullet}$, all results from \Cref{Thm: Cohomology H}
also hold for the de Rham complex
$(\mathring{H}\mathfrak{L}^{\bullet},\mathfrak{d})$.
\end{corollary}

\begin{proof} 
Since the trace on $\partial Y$ commutes with the (mixed-dimensional)
exterior derivative, the result is immediate.
\qed
\end{proof}

The codifferential also defines de Rham complexes
$(H^*\mathfrak{L}^{\bullet},\mathfrak{d}^*)$ and
$(\mathring{H}^*\mathfrak{L}^{\bullet},\mathfrak{d}^*)$,
similar to \eqref{eq: De Rham C*}.

\begin{corollary}\label{lem: co-complex} 
 $(H^*\mathfrak{L}^{\bullet},\mathfrak{d}^*)$ and $(\mathring{H}^*\mathfrak{L}^{\bullet},\mathfrak{d}^*)$ are chain complexes, i.e. for all $\mathfrak{a}\in H^*\mathfrak{L}^k$,
  $\mathfrak{d}^*\mathfrak{d}^*\mathfrak{a} = 0$.
\end{corollary}

\begin{proof}

For any $\mathfrak{a}\in H^*\mathfrak{L}^k$, application of
integrating by parts twice gives:
\begin{equation}
(\mathfrak{d}^*\mathfrak{d}^*\mathfrak{a},\mathfrak{b})_{\mathfrak{F}} = 
(\Tr\mathfrak{d}\mathfrak{b},\Tr^*\mathfrak{a})_{\partial Y} + 
(\Tr\mathfrak{b},\Tr^*\mathfrak{d}^*\mathfrak{a})_{\partial Y}
\qquad\text{for all }
\mathfrak{b}\in H\mathfrak{L}^{k-2}. 
\label{eqB.2}
\end{equation}
Thus
$(\mathfrak{d}^*\mathfrak{d}^*\mathfrak{a},\mathfrak{b}) = 0$
for all $\mathfrak{a}\in\mathring{H}^*\mathfrak{L}^k$,
and it follows that $\mathfrak{d}^*\mathfrak{d}^*\mathfrak{a} = 0$
pointwise away from the boundary. But then since the boundary has no
measure it follows that
$(\mathfrak{d}^*\mathfrak{d}^*\mathfrak{a},\mathfrak{b}) = 0$
for all $\mathfrak{a}$. As a consequence, we obtain the generalization
of the integration by parts formula on the boundary
\begin{equation}
(\Tr\mathfrak{d}\mathfrak{b},\Tr^*\mathfrak{a})_{\partial Y} + 
(\Tr\mathfrak{b},\Tr^*\mathfrak{d}^*\mathfrak{a})_{\partial Y} = 0
\qquad\text{for all }\mathfrak{a}\in H^*\mathfrak{L}^k 
\text{ and }
\mathfrak{b}\in H\mathfrak{L}^{k-2}. 
\label{eqB.3}
\end{equation}
\qed
\end{proof}

\begin{remark}
	We expect the constraint $n \le 3$ in \Cref{Thm: Cohomology H} to be superfluous. However, we have not confirmed all details for $n \ge 4$. 
\end{remark}
The cohomology structure of the complexes from \Cref{lem: co-complex} will be considered in the following subsection.

\subsection{Helmholtz and Hodge decompositions}\label{sec3.3}

The Helmholtz and Hodge decompositions of the standard Sobolev spaces extend to the mixed-dimensional setting. 

\begin{theorem} \label{thm: Hodge} 
For any
$\mathfrak{a}\in L^2\mathfrak{L}^k$, there exist a Hodge
decomposition $\mathfrak{a}_{\mathfrak{d}} \in H\mathfrak{L}^k$,
$\mathfrak{a}_{\mathfrak{d}^*} \in \mathring{H}^*\mathfrak{L}^k$
and $\mathfrak{a}_0 \in \mathfrak{H}^k$ such that
\begin{equation}
\mathfrak{a} = \mathfrak{a}_{\mathfrak{d}} + \mathfrak{a}_{\mathfrak{d}^*} + \mathfrak{a}_0
\label{eq3.25}
\end{equation}
while
$\mathfrak{d}^*\mathfrak{a}_{\mathfrak{d}^*} = \mathfrak{d}\mathfrak{a}_{\mathfrak{d}} = 
\mathfrak{d}^*\mathfrak{a}_0 = \mathfrak{d}\mathfrak{a}_0 = 0$. 
The space $\mathfrak{H}^k$ is referred to as the space of
mixed-dimensional harmonic forms, is isomorphic to the cohomology, and
is defined by
\begin{equation}
\mathfrak{H}^k = \{\mathfrak{c}\in H\mathfrak{L}^k \cap 
\mathring{H}^*\mathfrak{L}^k \mid \mathfrak{d}^*\mathfrak{c} = \mathfrak{d}\mathfrak{c} = 0 \}.
\label{eq3.26}
\end{equation}
On contractible domains, $\mathfrak{H}^k = 0$ and
$\mathfrak{a}_0 = 0$, and we refer to the remaining two terms as the
Helmholtz decomposition.
\end{theorem}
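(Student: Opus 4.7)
The plan is to invoke the abstract Hilbert-space Hodge decomposition for the adjoint pair $(\mathfrak{d},\mathfrak{d}^*)$ already constructed. By \cref{lem: HL Hilbert,lem: HL Hilbert2} the operators $\mathfrak{d}\colon H\mathfrak{L}^{k-1} \to L^2\mathfrak{L}^k$ and $\mathfrak{d}^*\colon \mathring{H}^*\mathfrak{L}^{k+1} \to L^2\mathfrak{L}^k$ are closed, densely-defined operators between Hilbert spaces, and \cref{def:weakcodifferential} identifies them as mutual adjoints once the Dirichlet boundary condition is placed on the codifferential side. Moreover, $\mathfrak{dd}=0$ and $\mathfrak{d}^*\mathfrak{d}^*=0$ imply that the range of each operator lies inside the kernel of the other.

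The abstract identity $L^2 = \overline{\mathrm{range}(T)} \oplus \ker(T^*)$, applied once with $T=\mathfrak{d}$ and once with $T=\mathfrak{d}^*$, produces two orthogonal splittings of $L^2\mathfrak{L}^k$. Because $\overline{\mathfrak{d}(H\mathfrak{L}^{k-1})} \subset \ker(\mathfrak{d})$, intersecting the second splitting with $\ker(\mathfrak{d})$ refines the picture to
\begin{equation*}
	L^2\mathfrak{L}^k \;=\; \overline{\mathfrak{d}(H\mathfrak{L}^{k-1})} \;\oplus\; \overline{\mathfrak{d}^*(\mathring{H}^*\mathfrak{L}^{k+1})} \;\oplus\; \mathfrak{H}^k,
\end{equation*}
which assigns to every $\mathfrak{a}\in L^2\mathfrak{L}^k$ the three summands $\mathfrak{a}_\mathfrak{d}$, $\mathfrak{a}_{\mathfrak{d}^*}$, $\mathfrak{a}_0$.

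It then remains to upgrade regularity so that each summand lies in the space claimed. If $\mathfrak{d}\mathfrak{b}_n \to \mathfrak{a}_\mathfrak{d}$ in $L^2\mathfrak{L}^k$, then $\mathfrak{d}(\mathfrak{d}\mathfrak{b}_n)=0$, and closedness of $\mathfrak{d}$ forces $\mathfrak{a}_\mathfrak{d}\in H\mathfrak{L}^k$ with $\mathfrak{d}\mathfrak{a}_\mathfrak{d}=0$; the analogous argument, using the Dirichlet condition, places $\mathfrak{a}_{\mathfrak{d}^*}\in \mathring{H}^*\mathfrak{L}^k$ with $\mathfrak{d}^*\mathfrak{a}_{\mathfrak{d}^*}=0$. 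The third summand belongs to $\mathfrak{H}^k$ by construction, and the identification of $\mathfrak{H}^k$ with the cohomology of $(H\mathfrak{L}^\bullet,\mathfrak{d})$, hence its vanishing on contractible domains, follows from $\mathfrak{H}^k \cong \ker(\mathfrak{d})/\overline{\mathrm{range}(\mathfrak{d})}$ combined with \cref{Thm: Cohomology H}.

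The main obstacle I anticipate is the closed-range question: the abstract argument yields $\mathfrak{a}_\mathfrak{d}$ only in the $L^2$-closure of $\mathfrak{d}(H\mathfrak{L}^{k-1})$, not necessarily in the image itself, and likewise for $\mathfrak{a}_{\mathfrak{d}^*}$. This distinction is immaterial for the statement as written (since only $\mathfrak{a}_\mathfrak{d}\in H\mathfrak{L}^k$ with vanishing differential is claimed), but making the isomorphism $\mathfrak{H}^k \cong \mathrm{coker}(\mathfrak{d})$ literal would require a mixed-dimensional Poincar\'e--Friedrichs estimate to guarantee closed range; apart from that subtlety, the proof reduces to routine functional analysis on top of the adjointness already established.
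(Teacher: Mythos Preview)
Your proposal is correct and follows essentially the same route as the paper: both arguments obtain the three-term splitting from the orthogonality $\mathfrak{d}H\mathfrak{L}^{k-1}\perp\mathcal{N}(\mathring{H}^*\mathfrak{L}^k,\mathfrak{d}^*)$ furnished by the integration-by-parts identity \eqref{int by parts weak}, then characterize the leftover piece as $\mathfrak{H}^k$ and invoke \cref{Thm: Cohomology H} for contractible $Y$. Your explicit use of closures $\overline{\mathfrak{d}(H\mathfrak{L}^{k-1})}$ and your remark on the closed-range question are in fact more careful than the paper's own write-up, which silently writes $\mathfrak{d}H\mathfrak{L}^{k-1}$ in \eqref{eq3.28} without the bar; but this is a refinement of presentation, not a different strategy.
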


\begin{proof}
We introduce the notation
$\mathfrak{d}H\mathfrak{L}^k \subset H\mathfrak{L}^{k + 1}$ and
$\mathcal{N}(H\mathfrak{L}^k,\mathfrak{d}) \subset H\mathfrak{L}^k$
to indicate the range and null-space of $\mathfrak{d}$, respectively.
Due to the integration-by-parts formula \Cref{int by parts weak} it is clear that the
space $L^2\mathfrak{L}^k$ decomposes into orthogonal complements
\begin{equation}
\mathfrak{d}H\mathfrak{L}^{k-1}\perp\mathcal{N}(\mathring{H}^*\mathfrak{L}^k,\mathfrak{d}^*),
\label{eq3.27}
\end{equation}
where
$\mathcal{N}(\mathring{H}^*\mathfrak{L}^k,\mathfrak{d}^*) \supseteq 
\mathfrak{d}^*\mathring{H}^*\mathfrak{L}^{k + 1}$.
Thus we obtain
\begin{equation}
L^2\mathfrak{L}^k = \mathfrak{d}H\mathfrak{L}^{k-1} \oplus 
\mathfrak{d}^*\mathring{H}^*\mathfrak{L}^{k + 1} \oplus 
\mathfrak{H}^k.
\label{eq3.28}
\end{equation}
Here $\mathfrak{H}^k$ is the part of $L^2\mathfrak{L}^k$
perpendicular to the first two terms. We categorize the last term: Let
$\mathfrak{c}\in\mathfrak{H}^k$, thus for all
$\mathfrak{b}\in H\mathfrak{L}^{k-1}$ and for all
$\mathfrak{a}\in \mathfrak{d}^*\mathring{H}^*\mathfrak{L}^{k + 1}$
\begin{equation}
(\mathfrak{c},\mathfrak{d}\mathfrak{b})_{\mathfrak{F}} = 0 = (\mathfrak{c},\mathfrak{d}^*\mathfrak{a})_{\mathfrak{F}}.
\label{eq3.29}
\end{equation}
Then by integration by parts, we have for all $\mathfrak{a},\mathfrak{b}$ as
above
\begin{equation}
(\mathfrak{d}^*\mathfrak{c},\mathfrak{b})_{\mathfrak{F}} + 
(\Tr\mathfrak{b},\Tr^*\mathfrak{c})_{\partial Y} = 0 = 
(\mathfrak{d}\mathfrak{c},\mathfrak{a})_{\mathfrak{F}}.
\label{eq3.30}
\end{equation}
Thus $\mathfrak{H}^k$ has the definition stated in the theorem. That
$\mathfrak{H}^k$ is isomorphic to cohomology follows directly from
the decomposition since
$\mathcal{N}(H\mathfrak{L}^k,\mathfrak{d})\mathfrak{= d} H\mathfrak{L}^{k-1} \oplus \mathfrak{H}^k$.

Finally, it follows from \Cref{Thm: Cohomology H} that $\mathfrak{H}^k$ is void if $Y$ is contractible.
\qed
\end{proof}

Note that each cohomology class contains exactly one harmonic form, and
that this minimizes the norm in its class (due to orthogonality). This
parallels the classical situation where this property is used to prove
Hodge's theorem, see e.g. \cite{14}.

\begin{lemma}\label{lem: Hodge H} 
For any
$\mathfrak{a}\in L^2 \mathfrak{L}^k$, there
exist $\mathfrak{b}_{\mathfrak{d}} \in H\mathfrak{L}^{k-1}$,
$\mathfrak{b}_{\mathfrak{d}^*} \in \mathring{H}^*\mathfrak{L}^{k + 1}$
and $\mathfrak{a}_0 \in \mathfrak{H}^k$ such that
\begin{equation}
\mathfrak{a} = 
\mathfrak{d}\mathfrak{b}_{\mathfrak{d}} + 
\mathfrak{d}^*\mathfrak{b}_{\mathfrak{d}^*} + \mathfrak{a}_0.
\label{eq3.31}
\end{equation}
\end{lemma}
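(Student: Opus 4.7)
The plan is to derive this statement as a direct corollary of Theorem~\ref{thm: Hodge}. Starting from $\mathfrak{a}\in L^2\mathfrak{L}^k$, I would apply the theorem to obtain $\mathfrak{a} = \mathfrak{a}_{\mathfrak{d}} + \mathfrak{a}_{\mathfrak{d}^*} + \mathfrak{a}_0$ with $\mathfrak{a}_0\in\mathfrak{H}^k$. The proof of Theorem~\ref{thm: Hodge} sharpens this via the orthogonal sum \eqref{eq3.28}, which places the first two summands in the ranges themselves, i.e.\ $\mathfrak{a}_{\mathfrak{d}}\in \mathfrak{d}H\mathfrak{L}^{k-1}$ and $\mathfrak{a}_{\mathfrak{d}^*}\in \mathfrak{d}^*\mathring{H}^*\mathfrak{L}^{k+1}$. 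Unpacking the definitions of these image spaces then gives potentials $\mathfrak{b}_{\mathfrak{d}}\in H\mathfrak{L}^{k-1}$ and $\mathfrak{b}_{\mathfrak{d}^*}\in \mathring{H}^*\mathfrak{L}^{k+1}$ with $\mathfrak{d}\mathfrak{b}_{\mathfrak{d}} = \mathfrak{a}_{\mathfrak{d}}$ and $\mathfrak{d}^*\mathfrak{b}_{\mathfrak{d}^*} = \mathfrak{a}_{\mathfrak{d}^*}$, and substituting yields \eqref{eq3.31}. No new constructions are needed beyond extraction of preimages under $\mathfrak{d}$ and $\mathfrak{d}^*$.

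The potentially delicate point in this argument is the validity of \eqref{eq3.28} itself, namely the closedness of the ranges $\mathfrak{d}H\mathfrak{L}^{k-1}$ and $\mathfrak{d}^*\mathring{H}^*\mathfrak{L}^{k+1}$ in $L^2\mathfrak{L}^k$. Without closedness, the Hodge decomposition would only place $\mathfrak{a}_{\mathfrak{d}}$ in the $L^2$-closure of the image of $\mathfrak{d}$, from which no $H\mathfrak{L}^{k-1}$-potential could be extracted. The standard route is a mixed-dimensional Poincar\'e--Friedrichs inequality generalizing \eqref{eq2.17}, bounding $\|\mathfrak{b}\|\lesssim\|\mathfrak{d}\mathfrak{b}\|$ on the orthogonal complement of $\mathcal{N}(H\mathfrak{L}^{k-1},\mathfrak{d})$, and dually for $\mathfrak{d}^*$ on the complement of $\mathcal{N}(\mathring{H}^*\mathfrak{L}^{k+1},\mathfrak{d}^*)$. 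Granted such estimates from the surrounding theory, the extraction of $\mathfrak{b}_{\mathfrak{d}}$ and $\mathfrak{b}_{\mathfrak{d}^*}$ is immediate and the lemma follows at once.
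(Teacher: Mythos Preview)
Your approach is essentially the same as the paper's, which simply states that the lemma follows immediately from \cref{thm: Hodge} (and \cref{Thm: Cohomology H}); indeed, the decomposition \eqref{eq3.28} already places the non-harmonic summands in the ranges of $\mathfrak{d}$ and $\mathfrak{d}^*$, so extracting potentials is immediate. Your remark on closedness of the ranges is well taken---the paper establishes the compact embedding and Poincar\'e--Friedrichs inequality only later, in \cref{the3.4.a} and \cref{the3.5.a}, so strictly speaking \eqref{eq3.28} as written in the proof of \cref{thm: Hodge} relies on results yet to come.
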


\begin{proof}
Follows immediately from \Cref{thm: Hodge,Thm: Cohomology H}.
\qed
\end{proof}

The role of boundary conditions on the spaces in
\Cref{thm: Hodge} and \Cref{lem: Hodge H} can be reversed, and for completeness we
state: 

\begin{theorem} \label{thm 3.3.d} 
For any $\mathfrak{a}\in L^2\mathfrak{L}^k$, there exist a
Hodge decomposition
$\mathfrak{a}_{\mathfrak{d}} \in \mathring{H}\mathfrak{L}^k$,
$\mathfrak{a}_{\mathfrak{d}^*} \in H^*\mathfrak{L}^k$ and
$\mathfrak{a}_0 \in \mathring{\mathfrak{H}}^k$ such that
\begin{equation}
\mathfrak{a} = \mathfrak{a}_{\mathfrak{d}} + 
\mathfrak{a}_{\mathfrak{d}^*} + \mathfrak{a}_0
\label{eq3.35}
\end{equation}
while
$\mathfrak{d}^*\mathfrak{a}_{\mathfrak{d}^*} = \mathfrak{d}\mathfrak{a}_{\mathfrak{d}} = 
\mathfrak{d}^*\mathfrak{a}_0 = \mathfrak{d}\mathfrak{a}_0 = 0$. 
The space $\mathring{\mathfrak{H}}^k$ is referred to as
the space of mixed-dimensional harmonic forms with boundary conditions,
and is defined by
\begin{equation}
\mathring{\mathfrak{H}}^k = 
\{\mathfrak{c}\in\mathring{H}\mathfrak{L}^k \cap H^*\mathfrak{L}^k \mid 
\mathfrak{d}^*\mathfrak{c} = \mathfrak{d}\mathfrak{c} = 0 \}.
\label{eq3.36}
\end{equation}
The spaces $\mathring{\mathfrak{H}}^k$ are isomorphic to
the cohomology of the de Rham complex
$(\mathring{H}\mathfrak{L}^{\bullet},\mathfrak{d}^*)$.
The elements $\mathfrak{a}_{\mathfrak{d}}$ and
$\mathfrak{a}_{\mathfrak{d}^*}$ can be represented as
$\mathfrak{a}_{\mathfrak{d}} = \mathfrak{d}\mathfrak{b}_{\mathfrak{d}}$
and
$\mathfrak{a}_{\mathfrak{d}^*} = \mathfrak{b}_{\mathfrak{d}^*}$
for
$\mathfrak{b}_{\mathfrak{d}} \in \mathring{H}\mathfrak{L}^{k-1}$,
$\mathfrak{b}_{\mathfrak{d}^*} \in H\mathfrak{L}^{k + 1}$.

\end{theorem}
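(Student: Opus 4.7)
The plan is to mirror the proof of \cref{thm: Hodge} with the boundary conditions interchanged between the $\mathfrak{d}$- and $\mathfrak{d}^*$-side. The starting point will be the weak integration-by-parts identity \cref{int by parts weak}, which produces no boundary contribution precisely when the test function lies in $\mathring{H}\mathfrak{L}^{k-1}$. This immediately yields the orthogonality
\[
\mathfrak{d}\mathring{H}\mathfrak{L}^{k-1} \perp \mathcal{N}(H^*\mathfrak{L}^k,\mathfrak{d}^*),
\]
and, since $\mathfrak{d}^*H^*\mathfrak{L}^{k+1} \subseteq \mathcal{N}(H^*\mathfrak{L}^k,\mathfrak{d}^*)$ by \cref{lem: co-complex}, I obtain the orthogonal $L^2$-decomposition
\[
L^2\mathfrak{L}^k = \mathfrak{d}\mathring{H}\mathfrak{L}^{k-1} \oplus \mathfrak{d}^*H^*\mathfrak{L}^{k+1} \oplus \mathring{\mathfrak{H}}^k,
\]
in which $\mathring{\mathfrak{H}}^k$ is defined as the remaining orthogonal complement. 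The decomposition \eqref{eq3.35} together with the potential representations for $\mathfrak{a}_{\mathfrak{d}}$ and $\mathfrak{a}_{\mathfrak{d}^*}$ will then be read off directly from this splitting.

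Next I would verify the identification \eqref{eq3.36} by testing an arbitrary $\mathfrak{c}$ in the complement separately against $\mathfrak{b}\in\mathring{H}\mathfrak{L}^{k-1}$ and against $\mathfrak{a}\in H^*\mathfrak{L}^{k+1}$. In the first pairing the boundary contribution $(\Tr\mathfrak{b},\Tr^*\mathfrak{c})_{\partial Y}$ vanishes because $\Tr\mathfrak{b}=0$, so varying $\mathfrak{b}$ extracts both $\mathfrak{c}\in H^*\mathfrak{L}^k$ and $\mathfrak{d}^*\mathfrak{c}=0$. In the second pairing the test function and its dual trace are free, so varying $\mathfrak{a}$ in the interior forces $\mathfrak{d}\mathfrak{c}=0$, while varying its boundary trace forces $\Tr\mathfrak{c}=0$ on $\partial_D Y$, placing $\mathfrak{c}\in\mathring{H}\mathfrak{L}^k$. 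Together these conditions are exactly \eqref{eq3.36}.

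For the cohomology statement I would apply the decomposition to a closed $\mathfrak{a}\in\mathring{H}\mathfrak{L}^k$: the component $\mathfrak{a}_{\mathfrak{d}^*}=\mathfrak{d}^*\mathfrak{b}_{\mathfrak{d}^*}$ satisfies $\mathfrak{d}^*\mathfrak{a}_{\mathfrak{d}^*}=0$ by \cref{lem: co-complex} and inherits $\mathfrak{d}\mathfrak{a}_{\mathfrak{d}^*}=\mathfrak{d}\mathfrak{a}-\mathfrak{d}\mathfrak{a}_{\mathfrak{d}}=0$, so it lies in $\mathring{\mathfrak{H}}^k$ and hence vanishes by orthogonality. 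This splits the closed forms of $\mathring{H}\mathfrak{L}^k$ as $\mathfrak{d}\mathring{H}\mathfrak{L}^{k-1}\oplus\mathring{\mathfrak{H}}^k$, giving $H^k(\mathring{H}\mathfrak{L}^{\bullet},\mathfrak{d})\cong\mathring{\mathfrak{H}}^k$. The main technical obstacle, just as in \cref{thm: Hodge}, will be ensuring that the ranges $\mathfrak{d}\mathring{H}\mathfrak{L}^{k-1}$ and $\mathfrak{d}^*H^*\mathfrak{L}^{k+1}$ are closed in $L^2\mathfrak{L}^k$, which in the classical theory is supplied by a Poincar\'e--Friedrichs inequality and here will require the analogous estimate for the reversed boundary conditions on the mixed-dimensional forest.
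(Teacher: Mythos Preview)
Your proposal is correct and follows precisely the route the paper intends: the paper does not spell out a proof of \cref{thm 3.3.d} but merely prefaces it with ``The role of boundary conditions on the spaces in \cref{thm: Hodge} and \cref{lem: Hodge H} can be reversed,'' and your argument is exactly that reversal of the proof of \cref{thm: Hodge}. Your identification of the closedness of the ranges (equivalently, the Poincar\'e--Friedrichs inequality for the reversed boundary conditions) as the one nontrivial ingredient is also on target, and this is supplied later in the paper by \cref{the3.4.a} and \cref{the3.5.a}.
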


Two subsequent corollaries follow from the above results.
\begin{corollary} \label{lem: dimension cocohomology}
  The sequence of mappings and spaces
  $(H^* \mathfrak{L}^{\bullet}(\mathfrak{F}),\mathfrak{d}^*)$ has the
  same dimension of the cohomology space as
  $(H^* \Lambda^{\bullet}(Y),\myd^*)$.
\end{corollary}
\begin{proof}
	We note the equality of dimension of $\mathring{\mathfrak{H}}^\bullet$ and $\mathfrak{H}^\bullet$, as well as the cohomology spaces for $(H \mathfrak{L}^\bullet(\mathfrak{F}), \mathfrak{d})$, $(H \Lambda^\bullet(Y), \myd)$ and $(H^* \Lambda^\bullet(Y), \myd^*)$.
\qed
\end{proof}

\begin{corollary}
    If $Y$ is a contractible domain, then for all
    $\mathfrak{a}\in H^*\mathfrak{L}^k(\Omega)$ such
    that $\mathfrak{d}^*\mathfrak{a} = 0$, there exists
    $\mathfrak{b} \in H^*\mathfrak{L}^{k + 1}$ such that
    $\mathfrak{d}^*\mathfrak{b = a}$.
\end{corollary}
\begin{proof}
	Follows directly from the Hodge decomposition in \Cref{thm 3.3.d} combined with the voidness of $\mathring{\mathfrak{H}}^k$ for contractible domains from \Cref{lem: dimension cocohomology}.
\qed
\end{proof}

\subsection{Embeddings}\label{sec3.6}

The mixed-dimensional spaces are related to each other by compact
embeddings (abbreviated as $\subset\subset$). In particular, we will
consider the space
$\mathring{H}\mathfrak{L}^k \cap H^*\mathfrak{L}^k$, and
associate with it the norm
\begin{equation}
\|\mathfrak{a}\|_{\mathfrak{d},\mathfrak{d}^*} = 
\|\mathfrak{a}\| + \|\mathfrak{d}\mathfrak{a}\| + \|\mathfrak{d}^*\mathfrak{a}\|.
\label{eq3.37}
\end{equation}

\begin{theorem}\label{the3.4.a} 
The spaces
$H\mathfrak{L}^k \cap \mathring{H}^*\mathfrak{L}^k \subset \subset L^2\mathfrak{L}^k$
and
$\mathring{H}\mathfrak{L}^k \cap H^*\mathfrak{L}^k \subset \subset L^2\mathfrak{L}^k$.
\end{theorem}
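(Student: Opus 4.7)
The plan is to reduce the compact embedding to classical Picard--type compactness on each fixed-dimensional component and then patch componentwise $L^2$-convergence together using a finite diagonal argument. This approach is enabled by the product characterization of $H\mathfrak{L}^k$ in \cref{H-char} and its counterpart for $H^*\mathfrak{L}^k$ implied by the analogous calculation.

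I would start with a sequence $\{\mathfrak{a}_m\}_{m \in \mathbb{N}}$ bounded in $\mathring{H}\mathfrak{L}^k \cap H^*\mathfrak{L}^k$ in the norm $\|\cdot\|_{\mathfrak{d},\mathfrak{d}^*}$; the argument for $H\mathfrak{L}^k \cap \mathring{H}^*\mathfrak{L}^k$ is symmetric. For each $j \in \mathfrak{F}$, the component $a_{j,m} = \iota_j \mathfrak{a}_m$ lies in $L^2 \Lambda^{k_j}(\Omega_{s_j})$, and the first step is to extract local $L^2$-bounds on $\myd a_{j,m}$ and $\myd^* a_{j,m}$. The explicit expression $\iota_j(\mathfrak{d}\mathfrak{a}_m) = \myd a_{j,m} + \iota_j(\Bbbd \mathfrak{a}_m)$ from \eqref{weak-d-exp} allows one to solve for $\myd a_{j,m}$ as a difference of terms controlled by $\|\mathfrak{d}\mathfrak{a}_m\|$ and $\|\mathfrak{a}_m\|$, and the identity in \cref{lem: IBPcont} does the same for $\myd^* a_{j,m}$. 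Since $\mathfrak{F}$ is finite, these combinations involve only finitely many bounded terms, so $\{a_{j,m}\}_m$ is bounded in $H\Lambda^{k_j}(\Omega_{s_j}) \cap H^*\Lambda^{k_j}(\Omega_{s_j})$.

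The second step is to install sufficient boundary conditions on each $\Omega_{s_j}$ to invoke a classical fixed-dimensional compactness result. On the external portion $\partial_j Y$, membership in $\mathring{H}\mathfrak{L}^k$ forces $\Tr a_{j,m} = 0$. On every internal face $\partial_l \Omega_{s_j}$ with $l \in I_{s_j}$, the continuity constraint in \eqref{alt-def-H} identifies $\Tr a_{j,m}$ up to sign with the component $a_{l,m}$ on $\Omega_{s_l}$, which is itself bounded in $L^2$. This significantly improved trace regularity on the interior boundary is what makes classical compactness applicable. I would use the extension operators of \cref{lem: extensions} to lift $a_{l,m}$ to an extension into $\Omega_{s_j}$ and subtract it off, reducing to a sequence with purely homogeneous boundary data on the full boundary of $\Omega_{s_j}$. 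The classical Picard compactness theorem for differential forms then yields a subsequence converging in $L^2\Lambda^{k_j}(\Omega_{s_j})$.

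Finally, since the forest has finitely many nodes, a finite diagonal extraction produces a single subsequence $\{\mathfrak{a}_{m_l}\}$ with $a_{j,m_l} \to \bar a_j$ in $L^2\Lambda^{k_j}(\Omega_{s_j})$ for every $j \in \mathfrak{F}$ simultaneously, which by \cref{def: L2} is exactly convergence in $L^2\mathfrak{L}^k$. The main obstacle I anticipate is the careful bookkeeping in the second step: on a given $\Omega_{s_j}$, the external Dirichlet piece, the interior-face traces inherited from neighboring components, and the cotrace terms appearing in $\iota_j(\mathfrak{d}^*\mathfrak{a})$ must all be simultaneously accounted for so that the extension-and-subtraction procedure leaves a residual sequence whose trace and cotrace vanish on the entire boundary in the appropriate sense. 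Once this accounting is cleanly done, the classical compactness on each fixed-dimensional manifold closes the argument.
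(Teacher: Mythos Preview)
Your proposal is correct and follows essentially the same route as the paper: reduce to the classical Picard/Rellich compactness $\mathring{H}\Lambda^{k_j}\cap H^*\Lambda^{k_j}\subset\subset L^2\Lambda^{k_j}$ on each submanifold by extracting local $\myd$ and $\myd^*$ bounds from the explicit formulas \eqref{weak-d-exp} and \cref{lem: IBPcont}, then handle the internal boundary data via extension-and-subtraction using the continuity constraint of \eqref{alt-def-H}.

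The one point where the paper is more explicit is the \emph{ordering} of the extraction. The paper runs an induction on the dimension $d$ of the nodes, starting at $d=k_i$ where $H\Lambda^{k_j}(\Omega_{s_j})=\mathring{H}\Lambda^{k_j}(\Omega_{s_j})$ automatically (top forms have trivial trace), extracts a convergent subsequence there, extends it, subtracts, and only then moves to $d+1$. After subtraction the new sequence has $\iota_j\mathfrak{b}_m=0$ on all nodes of dimension $d$, so the $(d{+}1)$-dimensional components again land in $\mathring{H}\Lambda\cap H^*\Lambda$ and the induction continues. Your ``finite diagonal extraction'' at the end implicitly relies on this same bottom-up structure: convergence of $a_{j,m}$ needs convergence of the subtracted extension $\mathcal{R}(a_{l,m})$, which in turn needs convergence of $a_{l,m}$ on the lower-dimensional face, and so on down. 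Making that induction explicit is exactly the ``careful bookkeeping'' you flag as the main obstacle; once you do, your argument and the paper's coincide.
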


\begin{proof}
As shorthand, we let
$G\mathfrak{L}^k = \mathring{H}\mathfrak{L}^k \cap H^*\mathfrak{L}^k$,
and likewise
$G\Lambda^k = \mathring{H}\Lambda^k \cap H^*\Lambda^k$.
We prove the theorem for this case, the other case being similar. We
recall that for Lipschitz domains it holds that
$G\Lambda^k \subset\subset L^2\Lambda^k$ \cite{15}.

It is clear from the definition of the inner products and norms that for
all $\mathfrak{a}\in G\mathfrak{L}^k$
\begin{equation}
\|\mathfrak{a}\| \lesssim \|\mathfrak{a}\|_{\mathfrak{d},\mathfrak{d}^*}.
\label{eq3.38}
\end{equation}
It remains to show precompactness, which is to say that all bounded
sequences $G\mathfrak{L}^k$ have a subsequence that is convergent in
$L^2\mathfrak{L}^k$.

Let $\mathfrak{a}_m \in G\mathfrak{L}^k$ be a bounded sequence. Since $G\mathfrak{L}^k \subset H \mathfrak{L}^k$, it follows from \Cref{H-char} that $\iota_i \mathfrak{a}_m \in H \Lambda^{k_i}(\Omega_i)$ for all nodes $i \in \mathfrak{F}$. Moreover, since $\| \iota_i (\mathfrak{d}^* \mathfrak{a}_m)\|_{\Omega_i}$ and $\| \iota_i \mathfrak{a}_m\|_{\Omega_i}$ are finite, it follows that $\| \iota_i (\myd^* \mathfrak{a}_m)\|_{\Omega_i}$ is finite as well. 
In turn, the functions $\iota_i \mathfrak{a}_m$ lie in $H\Lambda^{k_i}(\Omega_i) \cap H^*\Lambda^{k_i}(\Omega_i)$.
This allows us to construct a convergent subsequence, based on the
following argument:

\begin{enumerate}
\def\labelenumi{\arabic{enumi}.}
\item 
  We argue by induction. For a given $d$, for all $i \in I$ and $j \in I_i^d$, let $\iota_j \mathfrak{a}_m \in G\Lambda^{k_j}(\Omega_{s_j})$. 
  Then we can pass to a subsequence $\mathfrak{a}_{m'}$ of $\mathfrak{a}_m$ such that 
  $\iota_j \mathfrak{a}_{m'}$ is convergent in $L^2 \Lambda^{k_j}(\Omega_{s_j})$. We denote 
  $\overline{\mathfrak{a}}_{m'} \in G\mathfrak{L}^k$ as any smooth and bounded extension of this 
  limit sequence from \Cref{lem: extensions}.
  We now transform to a new sequence $\mathfrak{b}_m = \mathfrak{a}_{m'}-\overline{\mathfrak{a}}_{m'}$ 
  which is still bounded.
\item
  If $\mathfrak{a}_m$ satisfies the premise in step 1 for some
  $d$, the sequence $\mathfrak{b}_m$ satisfies the premise for
  $d + 1$ since clearly
  $\iota_j \mathfrak{b}_m = 0$
  for all $j \in I_i^d$, thus
  $\iota_{j'} \mathfrak{b}_m \in G\Lambda^{k_{j'}}(\Omega_{s_{j'}})$
  for all $j' \in I_i^{d + 1}$.
\item
  For any sequence $\mathfrak{a}_m$, the premise in step 1 holds for
  $d = k_i$ since then
  $H\Lambda^{k_i}(\Omega_{s_j})$ has no trace
  and
  $H\Lambda^{k_i}(\Omega_{s_j}) = \mathring{H}\Lambda^{k_i}(\Omega_{s_j})$.
\end{enumerate}

Thus any bounded sequence in $G\mathfrak{L}^k$ has a subsequence
that is convergent in $L^2\mathfrak{L}^k$, as desired.
\qed
\end{proof}

\subsection{Poincar\'e--Friedrichs Inequality}\label{sec3.5}

We close this section by extending the Poincar\'e--Friedrichs inequality to
the mixed-dimensional setting.

\begin{theorem}[Poincar\'e--Friedrichs Inequality]\label{the3.5.a} 
For
$\mathfrak{a}\in H\mathfrak{L}^k \cap \mathring{H}^*\mathfrak{L}^k$
or
$\mathfrak{a}\in \mathring{H}\mathfrak{L}^k \cap H^*\mathfrak{L}^k$,
with Hodge decompositions as in \Cref{sec3.3}, it holds that
\begin{equation}
\|\mathfrak{a}\| \le 
C(
\|\mathfrak{d}\mathfrak{a}_{\mathfrak{d}^*}\| + 
\|\mathfrak{d}^*\mathfrak{a}_{\mathfrak{d}}\|
) + 
\|\mathfrak{a}_0\|.
\label{eq3.44}
\end{equation}
\end{theorem}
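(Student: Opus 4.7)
The plan is to reduce to the two ``pure'' components via the Hodge decomposition and then to obtain each component-wise bound by a standard compactness-plus-contradiction argument, using the Rellich-type compact embedding from \cref{the3.4.a}. I will do the case $\mathfrak{a}\in\mathring{H}\mathfrak{L}^k\cap H^*\mathfrak{L}^k$; the other is symmetric.

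First I would invoke \cref{thm 3.3.d} to write $\mathfrak{a}=\mathfrak{a}_{\mathfrak{d}}+\mathfrak{a}_{\mathfrak{d}^*}+\mathfrak{a}_0$ as an $L^2\mathfrak{L}^k$-orthogonal sum, so that
\begin{equation*}
\|\mathfrak{a}\|^2=\|\mathfrak{a}_{\mathfrak{d}}\|^2+\|\mathfrak{a}_{\mathfrak{d}^*}\|^2+\|\mathfrak{a}_0\|^2.
\end{equation*}
Since $\mathfrak{a}_{\mathfrak{d}}=\mathfrak{d}\mathfrak{b}_{\mathfrak{d}}\in\mathring{H}\mathfrak{L}^k$ with $\mathfrak{d}\mathfrak{a}_{\mathfrak{d}}=0$, and since $\mathfrak{a}_{\mathfrak{d}^*}\in H^*\mathfrak{L}^k$ with $\mathfrak{d}^*\mathfrak{a}_{\mathfrak{d}^*}=0$, the assumption $\mathfrak{a}\in\mathring{H}\mathfrak{L}^k\cap H^*\mathfrak{L}^k$ together with $\mathfrak{a}_0\in\mathring{\mathfrak{H}}^k\subset\mathring{H}\mathfrak{L}^k\cap H^*\mathfrak{L}^k$ implies by subtraction that both $\mathfrak{a}_{\mathfrak{d}}$ and $\mathfrak{a}_{\mathfrak{d}^*}$ lie in the intersection space. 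Thus it suffices to establish
\begin{equation*}
\|\mathfrak{a}_{\mathfrak{d}}\|\le C\,\|\mathfrak{d}^*\mathfrak{a}_{\mathfrak{d}}\|,
\qquad
\|\mathfrak{a}_{\mathfrak{d}^*}\|\le C\,\|\mathfrak{d}\mathfrak{a}_{\mathfrak{d}^*}\|,
\end{equation*}
on the respective closed subspaces of the intersection space perpendicular to $\mathring{\mathfrak{H}}^k$, since each component is orthogonal to $\mathfrak{a}_0$ and hence to $\mathring{\mathfrak{H}}^k$ by construction.

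For the first bound I would argue by contradiction: assume a sequence $\mathfrak{a}_m\in\mathring{H}\mathfrak{L}^k\cap H^*\mathfrak{L}^k$ with $\mathfrak{d}\mathfrak{a}_m=0$, $\mathfrak{a}_m\perp\mathring{\mathfrak{H}}^k$, $\|\mathfrak{a}_m\|=1$ and $\|\mathfrak{d}^*\mathfrak{a}_m\|\to 0$. Then $\|\mathfrak{a}_m\|_{\mathfrak{d},\mathfrak{d}^*}$ is bounded, so by \cref{the3.4.a} a subsequence converges in $L^2\mathfrak{L}^k$ to some $\mathfrak{a}^\star$ with $\|\mathfrak{a}^\star\|=1$. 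Passing to the limit in the weak formulations $(\mathfrak{a}_m,\mathfrak{d}^*\mathfrak{c})_{\mathfrak{F}}=(\mathfrak{d}\mathfrak{a}_m,\mathfrak{c})_{\mathfrak{F}}=0$ and $(\mathfrak{a}_m,\mathfrak{d}\mathfrak{c})_{\mathfrak{F}}=(\mathfrak{d}^*\mathfrak{a}_m,\mathfrak{c})_{\mathfrak{F}}\to 0$ (for appropriate test forms from \cref{def:weakdifferential,def:weakcodifferential}) shows $\mathfrak{a}^\star\in\mathring{H}\mathfrak{L}^k\cap H^*\mathfrak{L}^k$ with $\mathfrak{d}\mathfrak{a}^\star=\mathfrak{d}^*\mathfrak{a}^\star=0$, i.e.\ $\mathfrak{a}^\star\in\mathring{\mathfrak{H}}^k$. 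Since $L^2\mathfrak{L}^k$-orthogonality to $\mathring{\mathfrak{H}}^k$ is preserved under strong $L^2$ convergence, $\mathfrak{a}^\star\perp\mathring{\mathfrak{H}}^k$, forcing $\mathfrak{a}^\star=0$, contradicting $\|\mathfrak{a}^\star\|=1$. The second bound follows by the entirely analogous argument on $\mathfrak{d}^*$-closed, $\mathring{\mathfrak{H}}^k$-perpendicular forms. Combining the two bounds with the orthogonal Pythagoras identity yields \eqref{eq3.44}.

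The main obstacle is verifying that the limit $\mathfrak{a}^\star$ actually inherits the correct boundary conditions (membership in $\mathring{H}\mathfrak{L}^k$) and the weak differential/codifferential identities; this requires care because the limit is taken only in $L^2\mathfrak{L}^k$, while $\mathfrak{d}\mathfrak{a}_m$ and $\mathfrak{d}^*\mathfrak{a}_m$ are controlled only in norm, not compactly. The passage to the limit must therefore be carried out in the weak dual form of \cref{def:weakdifferential,def:weakcodifferential}, which is precisely what \cref{H-char} and its codifferential analogue in \cref{lem: HL Hilbert2} make available; the trace condition on $\partial_D Y$ is preserved because the trace extends continuously to the closure of $\mathring{C}\mathfrak{L}^k$ under the graph norm.
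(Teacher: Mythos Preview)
Your proposal is correct and is precisely the ``standard compactness argument'' that the paper invokes: the paper's own proof consists of the single sentence ``This follows directly from \cref{the3.4.a} and standard compactness arguments,'' and what you have written is exactly that argument made explicit. The only refinement worth noting is that the limit $\mathfrak{a}^\star$ inheriting membership in $\mathring{H}\mathfrak{L}^k\cap H^*\mathfrak{L}^k$ can be obtained more directly by first extracting a weakly convergent subsequence in that Hilbert space (bounded sequences in Hilbert spaces have weakly convergent subsequences, and the intersection is Hilbert by \cref{lem: HL Hilbert,lem: HL Hilbert2}) and then identifying the weak limit with the strong $L^2$ limit; this sidesteps the trace-preservation concern you raise at the end.
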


\begin{proof} 
This follows directly from \Cref{the3.4.a} and standard compactness arguments.
\qed
\end{proof}

\section{Application: Elliptic partial differential equations}\label{sec4}

A large family of problems of physical interest arise as the
minimization of suitably defined energies. This allows us to derive
various problems of practical relevance, including variational problems
and strong forms of differential equations. The requisite results from
the fixed-dimensional elliptic differential equations carry over to
the mixed-dimensional setting due to the results of \Cref{sec3}, thus we will
in this section be brief in the exposition, and omit some generality and
technical details. In particular, we will unless otherwise stated
throughout this section only consider the case where $Y$ is
contractible, the general case being similar. For a background on
partial differential equations in the spirit of minimization problems
and function spaces, especially on mixed form, please confer i.e. \cite{16,7}.

We start by considering the minimization problem equivalent to a
second-order elliptic differential equation involving the
Hodge-Laplacian for
$\mathfrak{a}\in \mathring{H}\mathfrak{L}^k \cap H^*\mathfrak{L}^k$
\begin{equation}
\mathfrak{a} =\arginf_{\mathfrak{a}'\in \mathring{H}\mathfrak{L}^k \cap H^*\mathfrak{L}^k}
J_{\mathfrak{r},\mathfrak{f}}(\mathfrak{a}'),
\label{eq4.1}
\end{equation}
where we define the functional by
\begin{equation}
J_{\mathfrak{r},\mathfrak{f}}(\mathfrak{a}') = 
\tfrac12(\mathfrak{r}\mathfrak{d}^*\mathfrak{a}',\mathfrak{d}^*\mathfrak{a}') + 
\tfrac12(\mathfrak{r}^*\mathfrak{d}\mathfrak{a}',\mathfrak{d}\mathfrak{a}')-
(\mathfrak{f},\mathfrak{a}').
\label{eq4.2}
\end{equation}
Coefficients will be termed \emph{symmetric positive definite} if a scalar $r$ exists such that
\begin{equation}
\inf_{\substack{\mathfrak{b}\in\mathfrak{d}H\mathfrak{L}^k \\
\| \mathfrak{b} \|_{\Omega} = 1}}
(\mathfrak{r}\mathfrak{b},\mathfrak{b}) \geq r > 0 
\qquad\text{and}\qquad
(\mathfrak{r}\mathfrak{a},\mathfrak{b}) = (\mathfrak{a},\mathfrak{r}\mathfrak{b})
\qquad\text{for all }
\mathfrak{a},\mathfrak{b}\in H\mathfrak{L}^k. 
\label{eq4.3}
\end{equation}

\begin{theorem}\label{the4.1.a} 
For contractible domains $Y$, the functional
$J_{\mathfrak{r},\mathfrak{f}}$ has a unique minimum in
$\mathring{H}\mathfrak{L}^k \cap H^*\mathfrak{L}^k$ for
symmetric positive definite coefficients $\mathfrak{r}$ and
$\mathfrak{r}^*$.
\end{theorem}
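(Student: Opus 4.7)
The strategy is to recast the minimization problem \eqref{eq4.1} as a linear variational problem and invoke the Lax--Milgram lemma on the Hilbert space $V := \mathring{H}\mathfrak{L}^k \cap H^*\mathfrak{L}^k$ equipped with the graph norm $\|\cdot\|_{\mathfrak{d},\mathfrak{d}^*}$. Since $J_{\mathfrak{r},\mathfrak{f}}$ is a convex quadratic plus a linear term, its Euler--Lagrange equation reads: find $\mathfrak{a}\in V$ such that
\begin{equation*}
    B(\mathfrak{a},\mathfrak{a}') := (\mathfrak{r}\mathfrak{d}^*\mathfrak{a},\mathfrak{d}^*\mathfrak{a}')_{\mathfrak{F}} + (\mathfrak{r}^*\mathfrak{d}\mathfrak{a},\mathfrak{d}\mathfrak{a}')_{\mathfrak{F}} = (\mathfrak{f},\mathfrak{a}')_{\mathfrak{F}} \qquad \forall \mathfrak{a}' \in V,
\end{equation*}
and by the symmetry of $B$, any solution of the variational equation is necessarily the unique global minimizer of $J_{\mathfrak{r},\mathfrak{f}}$.

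First I would verify that $V$ is a Hilbert space under $\|\cdot\|_{\mathfrak{d},\mathfrak{d}^*}$; this follows by combining \cref{lem: HL Hilbert} and \cref{lem: HL Hilbert2}, since the intersection of two Hilbert spaces under the sum of their norms is Hilbert. The bilinear form $B$ is symmetric by assumption on $\mathfrak{r}$ and $\mathfrak{r}^*$, and continuous on $V$ by Cauchy--Schwarz together with the (implicit) boundedness of the coefficient operators. Continuity of the linear form $\mathfrak{a}'\mapsto(\mathfrak{f},\mathfrak{a}')_{\mathfrak{F}}$ on $V$ is automatic for $\mathfrak{f}\in L^2\mathfrak{L}^k$, since $\|\mathfrak{a}'\|\le\|\mathfrak{a}'\|_{\mathfrak{d},\mathfrak{d}^*}$.

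The essential step is coercivity of $B$ on $V$. Applying the lower bound in \eqref{eq4.3} to each term gives
\begin{equation*}
    B(\mathfrak{a},\mathfrak{a}) \ge r\bigl(\|\mathfrak{d}^*\mathfrak{a}\|^2 + \|\mathfrak{d}\mathfrak{a}\|^2\bigr).
\end{equation*}
To absorb the remaining $\|\mathfrak{a}\|^2$ into the right-hand side, I would invoke the mixed-dimensional Poincar\'e--Friedrichs inequality \cref{the3.5.a}. Since $Y$ is contractible, \cref{thm 3.3.d} and \cref{lem: dimension cocohomology} imply that the relevant harmonic space $\mathring{\mathfrak{H}}^k$ is trivial, so $\mathfrak{a}_0 = 0$ in the Hodge decomposition of $\mathfrak{a}$, and the orthogonality of the decomposition yields $\|\mathfrak{d}\mathfrak{a}_{\mathfrak{d}^*}\|=\|\mathfrak{d}\mathfrak{a}\|$ and $\|\mathfrak{d}^*\mathfrak{a}_{\mathfrak{d}}\|=\|\mathfrak{d}^*\mathfrak{a}\|$. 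Hence $\|\mathfrak{a}\|^2 \lesssim \|\mathfrak{d}\mathfrak{a}\|^2 + \|\mathfrak{d}^*\mathfrak{a}\|^2$, and combining yields $B(\mathfrak{a},\mathfrak{a}) \gtrsim \|\mathfrak{a}\|_{\mathfrak{d},\mathfrak{d}^*}^2$.

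The main obstacle I anticipate is the careful matching of the range condition in \eqref{eq4.3}, which assumes a lower bound only over $\mathfrak{d}H\mathfrak{L}^\bullet$, with the arguments $\mathfrak{d}\mathfrak{a}$ and $\mathfrak{d}^*\mathfrak{a}$ appearing in $B$. For $\mathfrak{d}\mathfrak{a}\in\mathfrak{d}\mathring{H}\mathfrak{L}^k$ this is immediate, but for $\mathfrak{d}^*\mathfrak{a}$ one must argue that it lies in the range of $\mathfrak{d}$ in the appropriate degree: since $\mathfrak{d}^*\mathfrak{d}^*\mathfrak{a}=0$ by \cref{lem: co-complex}, the Hodge decomposition \cref{thm: Hodge} on a contractible $Y$ forces its $\mathfrak{d}^*$-component to vanish, so $\mathfrak{d}^*\mathfrak{a}\in\mathfrak{d}H\mathfrak{L}^{k-2}$ and the hypothesis applies. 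Once coercivity is established, Lax--Milgram delivers existence and uniqueness of $\mathfrak{a}\in V$, completing the proof.
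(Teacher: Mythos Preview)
Your proof is correct and follows essentially the same route as the paper: both arguments reduce the minimization to coercivity of the quadratic form and then invoke the mixed-dimensional Poincar\'e--Friedrichs inequality (\cref{the3.5.a}) together with the vanishing of the harmonic forms on contractible $Y$ to obtain $B(\mathfrak{a},\mathfrak{a})\gtrsim\|\mathfrak{a}\|_{\mathfrak{d},\mathfrak{d}^*}^2$. Your treatment is in fact more careful than the paper's short argument, in that you explicitly verify the Hilbert-space structure of $V$ and address why the range hypothesis in \eqref{eq4.3} applies to $\mathfrak{d}^*\mathfrak{a}$, a point the paper leaves implicit.
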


\begin{proof}
The unique solvability of \eqref{eq4.1} is equivalent to the
coercivity of the quadratic term \cite{17}. With respect to the norm \eqref{eq3.37}, we
obtain coercivity by the Poincar\'e--Friedrichs inequality (subject to the
conditions in the proof):
\begin{equation}
\tfrac12(\mathfrak{r}\mathfrak{d}^*\mathfrak{a},\mathfrak{d}^*\mathfrak{a}) + 
\tfrac12(\mathfrak{r}^*\mathfrak{d}\mathfrak{a},\mathfrak{d}\mathfrak{a}) \geq 
\min(r,r^*)(\|\mathfrak{d}\mathfrak{a}\| + \|\mathfrak{d}^*\mathfrak{a}\|) \gtrsim 
\|\mathfrak{a}\|_{\mathfrak{d},\mathfrak{d}^*}.
\label{eq4.4}
\end{equation}
\qed
\end{proof}

From calculus of variations, we know that the minimum of \eqref{eq4.1} satisfies
the Euler--Lagrange equations when $J_{\mathfrak{r},\mathfrak{f}}$ is
differentiable. Thus
$\mathfrak{a}\in \mathring{H}\mathfrak{L}^k \cap H^*\mathfrak{L}^k$
satisfies
\begin{equation}
(\mathfrak{r}\mathfrak{d}^*\mathfrak{a},\mathfrak{d}^*\mathfrak{a}') + 
(\mathfrak{r}^*\mathfrak{d}\mathfrak{a},\mathfrak{d}\mathfrak{a}') = 
(\mathfrak{f},\mathfrak{a}')
\qquad\text{for all }
\mathfrak{a'} \in\mathring{H}\mathfrak{L}^k \cap H^*\mathfrak{L}^k.
\label{eq4.5}
\end{equation}
The existence of solutions of \eqref{eq4.5} follows directly from \Cref{the4.1.a}.
Uniqueness follows from the coercivity of the bilinear form
$(\mathfrak{r}\mathfrak{d}^*\mathfrak{a},\mathfrak{d}^*\mathfrak{a}') + 
(\mathfrak{r}^*\mathfrak{d}\mathfrak{a},\mathfrak{d}\mathfrak{a}')$.

\begin{remark}\label{rem4.1.b} 
It is important to note the structure of the
coefficient $\mathfrak{r}$ and $\mathfrak{r}^*$. Due to the
definition of the inner product \eqref{eq:inner product} and function spaces on the forest $\mathfrak{F}$, these coefficients
operate both on function values on manifolds $\Omega_i$, as
well as on the traces of functions, i.e. function values of the whole forest $\mathfrak{F}$ (but not the co-traces). These
aspects extend the concept of a ``material property'',
and form useful guidance in the design of constitutive laws for mixed-dimensional
models.
\end{remark}

\begin{remark}\label{rem4.1.x} 
In some applications, it will not be appropriate to include a differential 
equation at all intersections, and rather only consider jump conditions. 
This is reflected in a degeneracy of the coefficient for that sub-manifold. 
Thus it is of interest to consider the case where $\iota_i \mathfrak{r} \rightarrow 0$
on some manifolds. This is outside the scope of the current paper, 
but has been considered in related work \cite{13}.
\end{remark}

 \subsection{Variational problems in mixed form}\label{sec4.2}

 In order to capture explicitly physical quantities and properties of a
 problem, it is frequently preferable to consider a so-called mixed
 problem obtained by setting
 $\mathfrak{b} =-\mathfrak{r}\mathfrak{d}^*\mathfrak{a}$. Thus we consider:
 Find
 $(\mathfrak{a},\mathfrak{b}) \in H\mathfrak{L}^k \times H\mathfrak{L}^{k-1}$
 which satisfy
 \begin{alignat}{2}
 (\mathfrak{r}^{-1}\mathfrak{b},\mathfrak{b}') + (\mathfrak{a},\mathfrak{d}\mathfrak{b}') &= 0
 &&\text{for all }
 \mathfrak{b}' \in \mathfrak{L}^{k-1}, \label{eq4.6}\\
 (\mathfrak{d}\mathfrak{b},\mathfrak{a}') + 
 (\mathfrak{r}^*\mathfrak{d}\mathfrak{a},\mathfrak{d}\mathfrak{a}') &= 
 (\mathfrak{f},\mathfrak{a}')
 &\qquad&\text{for all }\mathfrak{a'} \in\mathfrak{L}^k. \label{eq4.7}
 \end{alignat}
 Equations \eqref{eq4.6}--\eqref{eq4.7} have the practical advantage that they do not
 require the coderivative. In applications, the exterior derivative often
 encompasses a conservation principle \cite{7,8,9}, and it is therefore also
 advantageous in applied computations to represent it explicitly (for an
 applied perspective, see also discussions in e.g. \cite{18,19,20,21}).

 \begin{theorem}\label{the4.2.a} 
 For uniformly bounded and non-degenerate
 coefficients $\mathfrak{r}$ and $\mathfrak{r}^*$, 
 problem \eqref{eq4.6}--\eqref{eq4.7} is well-posed.
 \end{theorem}

 \begin{proof}
 The well-posedness of the mixed formulation follows due to the
 inf-sup condition on the coupling term \cite{16,22}. Due to the Hodge decomposition
 and Poincar\'e inequality this holds by a similar calculation as in the
 fixed-dimensional case, see e.g.~the proof of Theorem 7.2 in \cite{7}.
 \qed
 \end{proof}
\begin{remark}\label{rem4.1.c} 
As announced in the introduction, the minimization problem \eqref{eq4.1} is algebraically equivalent to models used in applications, in particular for the case $k=n$. For this case, finite-dimensional approximation of equation \eqref{eq4.5} and \eqref{eq4.6}--\eqref{eq4.7} can be 
 constructed based on finite element exterior calculus \cite{7}. The resulting methods have the expected
 approximation properties based on preliminary computational examples \cite{5}. While not covered by the 
 present analysis, finite-dimensional approximations of the mixed variational problems can also be shown to allow
 for degeneracies in the coefficients which are of importance in practical computations \cite{13}.
\end{remark}

\section{Conclusions}\label{conclusions}

In this manuscript we have introduced a semi-discrete exterior
differential operator, in such a way that it corresponds to
hierarchically coupled mixed-dimensional partial differential equations.
We show that this mixed-dimensional exterior derivative inherits
standard properties from fixed-dimensional calculus, including a
codifferential, Hodge decomposition, Poincare's lemma and inequality,
and Stokes' theorem. Our approach leads to mixed-dimensional structures
of $k$-forms, similar to those of fixed-dimensional calculus, however,
the familiar Hodge duality is not available, and there is no wedge
product.

Within this setting, we define mixed-dimensional Hodge-Laplacians, and
show that they correspond to well-posed minimization problems.

As an application, we identify equations
familiar from applications. As such, the present analysis
provides a unified approach for handling problems in continuum mechanics
wherein the physical problem has high-aspect inclusion which are
beneficial to model as lower-dimensional.

\begin{acknowledgements}
The authors wish to thank Snorre Christiansen, Bernd Flemisch, Gunnar Fl{\o}ystad, 
Alessio Fumagalli, Eirik Keilegavlen, Olivier Verdier, 
Ivan Yotov, and Ludmil Zikatanov for valuable comments and discussions on this
topic.
\end{acknowledgements}

\bibliographystyle{spmpsci}
\bibliography{biblio}

\begin{thebibliography}{10}
\providecommand{\url}[1]{{#1}}
\providecommand{\urlprefix}{URL }
\expandafter\ifx\csname urlstyle\endcsname\relax
  \providecommand{\doi}[1]{DOI~\discretionary{}{}{}#1}\else
  \providecommand{\doi}{DOI~\discretionary{}{}{}\begingroup
  \urlstyle{rm}\Url}\fi

\bibitem{31}
Adams, R., Fournier, J.: Sobolev Spaces.
\newblock Pure and Applied Mathematics. Elsevier Science (2003)

\bibitem{1}
Alboin, C., Jaffr{\'e}, J., Roberts, J.E., Serres, C.: Domain decomposition for
  flow in porous media with fractures.
\newblock In: 14th Conference on Domain Decomposition Methods in Sciences and
  Engineering (1999)

\bibitem{7}
Arnold, D.N., Falk, R.S., Winther, R.: Finite element exterior calculus,
  homological techniques, and applications.
\newblock Acta Numerica \textbf{15}, 1--155 (2006)

\bibitem{22}
Babuska, I., Aziz, A.K.: Survey lectures on the mathematical foundations.
\newblock In: The Mathematical Foundations of the Finite Element Method with
  Applications to Partial Differential Equations, pp. 1--359. Academic Press,
  New York (1972)

\bibitem{2}
Bear, J.: Hydraulics of Groundwater.
\newblock McGraw-Hill (1979)

\bibitem{16}
Boffi, D., Brezzi, F., Fortin, M.: Mixed Finite Elements and Applications.
\newblock Springer, Heidelberg (2013)

\bibitem{13}
Boon, W.M., Nordbotten, J.M., Yotov, I.: Robust discretization of flow in
  fractured porous media.
\newblock SIAM Journal on Numerical Analysis \textbf{56}(4), 2203--2233 (2018)

\bibitem{6}
Bott, R., Tu, L.W.: Differential Forms in Algebraic Topology.
\newblock Springer, New York (1982)

\bibitem{17}
Braess, D.: Finite Elements: Theory, Fast Solvers and Applications in Solid
  Mechanics.
\newblock Cambridge University Press, Cambridge (2007)

\bibitem{buffa2002traces}
Buffa, A., Costabel, M., Sheen, D.: On traces for {H(curl, $\Omega$)} in
  {L}ipschitz domains.
\newblock J. Math. Anal. Appl \textbf{276}(2), 845--867 (2002)

\bibitem{30}
Buffa, A., P.~Ciarlet, J.: On traces for functional spaces related to
  {M}axwell's equations {P}art 1: {A}n integration by parts formula in
  {L}ipschitz polyhedra.
\newblock Mathematical methods in the Applied Sciences \textbf{24}, 9--30
  (2001)

\bibitem{19}
Chen, Z., Huan, G., Ma, Y.: Computational Methods for Multiphase Flows in
  Porous Media.
\newblock SIAM (2006)

\bibitem{3}
Ciarlet, P.G.: Mathematical Elasticity {V}olume {II}: Theory of Plates.
\newblock Elsevier, Amsterdam (1997)

\bibitem{24}
Formaggia, L., Fumagalli, A., Scotti, A., Ruffo, P.: A reduced model for
  {D}arcy's problem in networks of fractures.
\newblock ESAIM: Mathematical Modelling and Numerical Analysis \textbf{48}(4),
  1089--1116 (2014)

\bibitem{8}
Hiptmair, R.: Finite elements in computational electromagnetism.
\newblock Acta Numerica \textbf{11}, 237--339 (2002)

\bibitem{14}
Jost, J.: Riemannian Geometry and Geometric Analysis.
\newblock Springer, Heidelberg (1995)

\bibitem{20}
LeVeque, R.J.: Numerical Methods for Conservation Laws.
\newblock Birkh\"auser (1992)

\bibitem{11}
Licht, M.W.: Complexes of discrete distributional differential forms and their
  homology theory.
\newblock Foundations of Computational Mathematics  (2016)

\bibitem{9}
Lindell, I.V.: Differential Forms in Electromagnetics, Piscataway.
\newblock IEEE Press, NJ (2004)

\bibitem{martin2005modeling}
Martin, V., Jaffr{\'e}, J., Roberts, J.E.: Modeling fractures and barriers as
  interfaces for flow in porous media.
\newblock SIAM Journal on Scientific Computing \textbf{26}(5), 1667--1691
  (2005)

\bibitem{melrose2011remark}
Melrose, R.B.: A remark on distributions and the de {R}ham theorem.
\newblock arXiv preprint arXiv:1105.2597  (2011)

\bibitem{12}
Mitrea, D., Mitrea, M., Shaw, M.C.: Traces of differential forms on {L}ipschitz
  domains, the boundary {D}e {R}ham complex, and {H}odge decompositions.
\newblock Indiana University Mathematics Journal \textbf{57}(5), 2061--2095
  (2008)

\bibitem{21}
Nordbotten, J.M.: Convergence of a cell-centered finite volume discretization
  for linear elasticity.
\newblock SIAM Journal of Numerical Analysis \textbf{53}(6), 2605--2625 (2016)

\bibitem{5}
Nordbotten, J.M., Boon, W.M.: Modeling, structure and discretization of
  mixed-dimensional partial differential equations.
\newblock In: Domain Decomposition Methods in Science and Engineering XXIV,
  Lecture Notes in Computational Science and Engineering (2017)

\bibitem{4}
Nordbotten, J.M., Celia, M.A.: Geological Storage of CO$_2$: Modeling
  Approaches for Large-scale Simulation.
\newblock Wiley (2011)

\bibitem{15}
Picard, R.: An elementary proof for a compact imbedding result in generalized
  electromagnetic theory.
\newblock Mathematische Zeitschrift \textbf{187}, 151--164 (1984)

\bibitem{rudin2006functional}
Rudin, W.: Functional Analysis.
\newblock International series in pure and applied mathematics. McGraw-Hill
  (2006)

\bibitem{18}
Russell, T.F., Wheeler, M.F.: Finite element and finite difference methods for
  continuous flows in porous media.
\newblock In: R.E. Ewing (ed.) Mathematics of Reservoir Simulation, pp.
  35--106. SIAM (1983)

\bibitem{10}
Spivak, M.: Calculus on Manifolds.
\newblock Addison-Wesley, Reading, Massachusetts (1965)

\bibitem{Weil1952}
Weil, A.: Sur les théorèmes de de {R}ham.
\newblock Commentarii mathematici Helvetici \textbf{26}, 119--145 (1952)

\end{thebibliography}

\appendix
\section{Proof of \texorpdfstring{\Cref{Thm: Cohomology H}}{Theorem 3.30}}
\label{sec: proof of cohomology}

\begin{proof}
	From \Cref{lem: cohom Ln,lem: cohom Ln-1,lem: cohom Ln-2} below, it follows that without loss of generality, it is sufficient to consider functions $\mathfrak{a} \in H \mathfrak{L}^k$ where the only non-zero component is on $\Omega^n$. Consider therefore a form $\mathfrak{a} \in H \mathfrak{L}^k$ with $\mathfrak{da} = 0$ and $a_i = 0$ for all $i \in \mathfrak{F}$ with $d_i < n$.
	It follows that $\iota_i(\Bbbd \mathfrak{a}) = 0$ for $i \in I^{n - 1}$. In turn, the internal boundaries have no contribution on computations of integrals, which therefore depend only on the geometry of $Y$ itself. Since cohomology can be computed using integrals by de Rham's theorem, the claim follows.
\qed
\end{proof}

\begin{lemma} \label{lem: cohom Ln}
	For each $\mathfrak{a} \in H \mathfrak{L}^n$, there exists a $\mathfrak{b} \in H \mathfrak{L}^{n - 1}$ such that $\iota_i(\mathfrak{a - db}) = 0$ on all DAGs $\mathfrak{S}_i$ with $d_i < n$.
\end{lemma}
\begin{proof}
	Take $\mathfrak{a} \in H \mathfrak{L}^n$ as given and proceed according to the following three steps.

	\emph{Step A.}
		We start by considering $i \in I^0$. For any such $i$, we can construct a function $\mathfrak{c} \in H \mathfrak{L}^{n-1}$ using the extension operators from \Cref{lem: extensions}, such that
		\begin{enumerate}[label = \alph*), align = left]
			\item $\iota_i (\Bbbd \mathfrak{c}) = a_i$ 
			\item $\iota_j (\Bbbd \mathfrak{c}) = 0$ for all $j \in I^0 \setminus \{i\}$
			\item $c_{j'} = 0$ for $j' \in I$ if $d_{j'} \ne 1$ or if $I_{j'} \cap \gamma_i^{-1} = \emptyset$.
		\end{enumerate}
		
		We define the corrected function 
		\begin{align}
			\mathfrak{a' = a - dc}.
		\end{align}
		Note that $\mathfrak{a}'$ is in the same cohomology class as $\mathfrak{a}$ with $a_i'=0$, and we therefore rename $\mathfrak{a}'$ as $\mathfrak{a}$.

	\emph{Step B.}
		We repeat Step A for all $i \in I^0$.

	\emph{Step C.}
	Steps A and B are repeated for all $i \in I$ with $0 < d_i < n$ in increasing order of dimension. For each $i$, the correction function $\mathfrak{c} \in H \mathfrak{L}^{n-1}$ then satisfies
		\begin{enumerate}[label = \alph*), align = left]
			\item $\iota_i (\Bbbd \mathfrak{c}) = a_i$ 
			\item $\iota_j (\Bbbd \mathfrak{c}) = 0$ for all $j \in I^{d_i} \setminus \{i\}$
			\item $c_{j'} = 0$ for $j' \in I$ if $d_{j'} \ne d_i + 1$ or if $I_{j'} \cap \gamma_i^{-1} = \emptyset$.
		\end{enumerate}

	Defining $\mathfrak{b}$ as the sum of all consecutive correction functions, we arrive at the claim.
\qed
\end{proof}

\begin{lemma} \label{lem: cohom Ln-1}
	Given $n \le 3$, for each $\mathfrak{a} \in H \mathfrak{L}^{n - 1}$ with $\mathfrak{da} = 0$, there exists a $\mathfrak{b} \in H \mathfrak{L}^{n - 2}$ such that $\iota_i(\mathfrak{a - db}) = 0$ on all DAGs $\mathfrak{S}_i$ with $d_i < n$.
\end{lemma}
\begin{proof}
	Take $\mathfrak{a} \in H \mathfrak{L}^{n - 1}$ as given, then by definition $a_i = 0$ for all $i \in I^0$. We nonetheless first introduce a correction according to each zero-dimensional manifold such that the corresponding traces of $\mathfrak{a}$ vanish. We continue according to the following steps.

	\emph{Step A.}
		Consider any fixed $i \in I^0$. By continuity, each $a_j$ with $j \in \gamma_i^{-1}$ represents a neighboring trace of $\mathfrak{a}$ from the 1-manifolds adjacent to $i$.	We then introduce $\mathfrak{c} \in H \mathfrak{L}^{n - 2}$, using the extension operators from \Cref{lem: extensions}, such that 
		\begin{enumerate}[label = \alph*), align = left]
			\item $\iota_j(\Bbbd \mathfrak{c}) = a_j$ for each $j \in \gamma_i^{-1}$
			\item $\iota_{j'} (\Bbbd \mathfrak{c}) = 0$ for all $j' \in I_l^0 \setminus \gamma_i^{-1}$ with $l \in I^1$
			\item $c_{j''} = 0$ for ${j''} \in I$ if $d_{j''} \ne 2$ or if $I_{j''} \cap \gamma_i^{-2} = \emptyset$.
		\end{enumerate}

		Constraint a) above involves solving a system of $|\gamma_i^{-1}|$ equations with $|\gamma_i^{-2}| = | \bigcup_{j \in \gamma_i^{-1}} \gamma_j^{-1}|$ unknowns. This system is solvable by the fact that $\iota_i (\Bbbd \mathfrak{a}) = 0$ and the assumed geometry.

		We define the corrected function $\mathfrak{a' = a - dc}$. Note that $\mathfrak{a}'$ is in the same cohomology class as $\mathfrak{a}$ with $a_j'=0$, and we therefore rename $\mathfrak{a}'$ as $\mathfrak{a}$.

	\emph{Step B.}
		We repeat Step A for all $i \in I^0$.

	\emph{Step C.}
		Consider any fixed $i \in I^1$. We construct a function $\mathfrak{c} \in H \mathfrak{L}^{n - 2}$ such that
		\begin{enumerate}[label = \alph*), align = left]
			\item $\iota_i (\Bbbd \mathfrak{c}) = a_i$ 
			\item $\iota_j (\Bbbd \mathfrak{c}) = 0$ for all $j \in I^1 \setminus \{i\}$
			\item $c_j = 0$ for $j \in I$ if $d_j \ne 2$ or if $I_j \cap \gamma_i^{-1} = \emptyset$.
		\end{enumerate}

		Introducing $\mathfrak{a' = a - dc}$, we note that $a_i' = 0$ and we rename $\mathfrak{a}'$ as $\mathfrak{a}$.

	\emph{Step D.}
		Step C is repeated for all $i \in I^1$. The result is now shown for $n = 2$.

	\emph{Step E.}
		Continuing with $n = 3$, consider any fixed $i \in I^2$. We construct a function $\mathfrak{c} \in H \mathfrak{L}^{n - 2}$, using the extension operators from \Cref{lem: extensions}, such that
		\begin{enumerate}[label = \alph*), align = left]
			\item $\iota_i (\Bbbd \mathfrak{c}) = a_i$ 
			\item $\iota_j (\Bbbd \mathfrak{c}) = 0$ for all $j \in I^2 \setminus \{i\}$
			\item $c_j = 0$ for $j \in I$ if $d_j \ne 3$ or if $I_j \cap \gamma_i^{-1} = \emptyset$.
		\end{enumerate}
		Introducing $\mathfrak{a' = a - dc}$, we note that $a_i' = 0$ and we rename $\mathfrak{a}'$ as $\mathfrak{a}$.

	\emph{Step F.}
		Repeating Step E for all $i \in I^2$ proves the result for $n = 3$.
\qed
\end{proof}

\begin{lemma} \label{lem: cohom Ln-2}
	Given $n = 3$, for each $\mathfrak{a} \in H \mathfrak{L}^1$ with $\mathfrak{da} = 0$, there exists a $\mathfrak{b} \in H \mathfrak{L}^0$ such that $\iota_i(\mathfrak{a - db}) = 0$ on all DAGs $\mathfrak{S}_i$ with $d_i < n$.
\end{lemma}
\begin{proof}

	As in the previous lemmas, we correct a given form $\mathfrak{a}$. Note that here, we only need to correct $\mathfrak{a}$ on DAGs $\mathfrak{S}_i$ with $d_i = 2$. We start with the zero-dimensional nodes in these DAGs and progress in increasing order of dimension.
	
	\emph{Step A.} 	Consider $i \in I^0$ fixed.
	We construct $\mathfrak{c} \in H \mathfrak{L}^0$ such that 
	\begin{enumerate}[label = \alph*), align = left]
		\item $\iota_j(\Bbbd \mathfrak{c}) = a_j$ for each $j \in \gamma_i^{-2}$
		\item $\iota_{j'} (\Bbbd \mathfrak{c}) = 0$ for all $j' \in I_l^0 \setminus \gamma_i^{-2}$ with $l \in I^2$
		\item $c_{j''} = 0$ for $j'' \in I$ if $I_{j''} \cap \gamma_i^{-3} = \emptyset$.
	\end{enumerate}
	In the above, a) involves solving a system of $|\gamma_i^{-2}|$ equations with $|\gamma_i^{-3}| = | \bigcup_{j \in \gamma_i^{-2}} \gamma_j^{-1}|$ unknowns. Due to the fact that $\iota_{j'} (\Bbbd \mathfrak{a}) = 0$ for all $j' \in \gamma_i^{-1}$ this system reduces to $|\gamma_i^{-2}| - |\gamma_i^{-1}|$ equations, which is solvable by the geometric constraints.

	We define the corrected function $\mathfrak{a' = a - dc}$. Note that $\mathfrak{a}'$ is in the same cohomology class as $\mathfrak{a}$ with $a_j'=0$, and we therefore rename $\mathfrak{a}'$ as $\mathfrak{a}$.

	\emph{Step B.} Repeat step A for all $i \in I^0$.

	\emph{Step C.} Consider $i \in I^1$ fixed. Let $j \in \gamma_i^{-1}$ and consider $a_j$. We construct $\mathfrak{c} \in H \mathfrak{L}^0$ such that 
	\begin{enumerate}[label = \alph*), align = left]
		\item $\iota_j(\Bbbd \mathfrak{c}) = a_j$ for each $j \in \gamma_i^{-1}$
		\item $\iota_{j'} (\Bbbd \mathfrak{c}) = 0$ for all $j' \in I_l^1 \setminus \gamma_i^{-1}$ with $l \in I^2$
		\item $c_{j''} = 0$ for $j'' \in I$ if $I_{j''} \cap \gamma_i^{-2} = \emptyset$.
	\end{enumerate}

	In the above, a) involves solving a system of $|\gamma_i^{-1}|$ equations with $|\gamma_i^{-2}|$ unknowns. Due to the fact that $\iota_i (\Bbbd \mathfrak{a}) = 0$ this system is solvable.
	Introducing $\mathfrak{a' = a - dc}$, we note that $a_j' = 0$ and we rename $\mathfrak{a}'$ as $\mathfrak{a}$.

	\emph{Step D.} Repeat step C for all $i \in I^1$.

	\emph{Step E.} Consider $i \in I^2$ fixed.
	We construct $\mathfrak{c} \in H \mathfrak{L}^0$ such that 
	\begin{enumerate}[label = \alph*), align = left]
		\item $\iota_i(\Bbbd \mathfrak{c}) = a_i$
		\item $\iota_j (\Bbbd \mathfrak{c}) = 0$ for all $j \in I^2 \setminus \{i\}$
		\item $c_{j'} = 0$ for $j \in I$ if $I_{j} \cap \gamma_i^{-1} = \emptyset$.
	\end{enumerate}
	Introducing $\mathfrak{a' = a - dc}$, we note that $a_i' = 0$ and we rename $\mathfrak{a}'$ as $\mathfrak{a}$.

	\emph{Step F.} Repeat step E for all $i \in I^2$ to obtain the result.
\qed
\end{proof}

\end{document}